\setlist[enumerate,1]{label={\upshape(\arabic*)}}
\setlist[enumerate,2]{label={\upshape(\alph*)}}
\tikzset{black/.style={circle,fill=black,inner sep=3pt,outer sep=3pt},
         white/.style={circle,fill=white,draw=black,inner sep=3pt,outer sep=3pt},
}
\newcolumntype{C}{>{$}c<{$}}
\newtheorem{theorem}{Theorem}[section]
\newtheorem{theoremi}{Theorem}
\newtheorem{corollary}[theorem]{Corollary}
\newtheorem{lemma}[theorem]{Lemma}
\newtheorem*{lemma*}{Lemma}
\newtheorem*{theorem*}{Theorem}
\newtheorem{proposition}[theorem]{Proposition}
\newtheorem{definition-proposition}[theorem]{Definition-Proposition}
\theoremstyle{definition}
\newtheorem{definition}[theorem]{Definition}
\newtheorem{remark}[theorem]{Remark}
\newtheorem*{ack}{Acknowledgement}
\newtheorem*{conv}{Conventions and notation}
\newtheorem*{org}{Organization}
\newcommand{\qedb}{\hfill\blacksquare}
\newcommand{\la}{\langle}
\newcommand{\ra}{\rangle}
\renewcommand{\AA}{\mathcal{A}}
\newcommand{\CC}{\mathcal{C}}
\newcommand{\PP}{\mathcal{P}}
\newcommand{\TT}{\mathcal{T}}
\newcommand{\TTT}{\mathsf{T}}
\newcommand{\UU}{\mathcal{U}}
\newcommand{\WW}{\mathcal{W}}
\newcommand{\WWW}{\mathsf{W}}
\newcommand{\XX}{\mathcal{X}}
\newcommand{\Ext}{\operatorname{Ext}\nolimits}
\newcommand{\Hom}{\operatorname{Hom}\nolimits}
\newcommand{\End}{\operatorname{End}\nolimits}
\newcommand{\RHom}{\mathbf{R}\strut\kern-.2em\operatorname{Hom}\nolimits}
\newcommand{\Image}{\operatorname{Im}\nolimits}
\newcommand{\Kernel}{\operatorname{Ker}\nolimits}
\newcommand{\Cokernel}{\operatorname{Coker}\nolimits}
\newcommand{\coker}{\Cokernel}
\newcommand{\im}{\Image}
\renewcommand{\ker}{\Kernel}
\newcommand{\ov}{\overline}
\DeclareMathOperator{\moduleCategory}{\mathsf{mod}} \renewcommand{\mod}{\moduleCategory}
\DeclareMathOperator{\ice}{\mathsf{ice}}
\DeclareMathOperator{\icep}{\mathsf{ice_p}}
\DeclareMathOperator{\ftors}{\mathsf{f-tors}}
\DeclareMathOperator{\tors}{\mathsf{tors}}
\DeclareMathOperator{\rigid}{\mathsf{rigid}}
\DeclareMathOperator{\rigidz}{\mathsf{rigid_0}}
\DeclareMathOperator{\stilt}{\mathsf{stilt}}
\DeclareMathOperator{\wide}{\mathsf{wide}}
\DeclareMathOperator{\fwide}{\mathsf{f-wide}}
\DeclareMathOperator{\Sub}{\mathsf{Sub}}
\DeclareMathOperator{\Fac}{\mathsf{Fac}}
\DeclareMathOperator{\add}{\mathsf{add}}
\DeclareMathOperator{\ccok}{\mathsf{cok}}
\newcommand{\iso}{\cong}
\newcommand{\defl}{\twoheadrightarrow}
\newcommand{\equi}{\simeq}
\numberwithin{equation}{section}
\begin{document}
\title[Rigid modules and ICE-closed subcategories in quiver representations]{Rigid modules and ICE-closed subcategories \\in quiver representations}

\author[H. Enomoto]{Haruhisa Enomoto}
\address{Graduate School of Mathematics, Nagoya University, Chikusa-ku, Nagoya. 464-8602, Japan}
\email{m16009t@math.nagoya-u.ac.jp}
\subjclass[2010]{16G20, 16G10}
\keywords{path algebra; rigid modules; ICE-closed subcategories; exceptional sequences}
\begin{abstract}
  We introduce image-cokernel-extension-closed (ICE-closed) subcategories of module categories. This class  unifies both torsion classes and wide subcategories. We show that ICE-closed subcategories over the path algebra of Dynkin type are in bijection with basic rigid modules, that ICE-closed subcategories are precisely torsion classes in some wide subcategories, and that the number does not depend on the orientation of the quiver. We give an explicit formula of this number for each Dynkin type, and in particular, it is equal to the large Schr\"oder number for type A case.
\end{abstract}

\maketitle

\tableofcontents

\section{Introduction}
Let $\Lambda$ be an artin algebra and $\mod\Lambda$ the category of finitely generated right $\Lambda$-modules.
There are several kinds of subcategories of $\mod\Lambda$ which have been investigated in the representation theory of algebras, e.g. \emph{wide} subcategories, \emph{torsion classes}, \emph{torsion-free classes}, and so on. These subcategories are defined by the property that they are closed under certain operations: e.g.  taking kernels, cokernels, images, extensions, submodules, or quotients.

In this paper, we propose a new class of subcategories of $\mod\Lambda$, \emph{ICE-closed} subcategories, which is a subcategory closed under Images, Cokernels and Extensions. Typical examples of ICE-closed subcategories are torsion classes and wide subcategories, but there are more than them.

Recently, there are lots of studies on the classification of \emph{nice} subcategories in terms of \emph{nice} modules. One of the most prominent results is the \emph{$\tau$-tilting theory} established in \cite{AIR}, which gives a bijection between functorially finite torsion classes and certain class of modules called \emph{support $\tau$-tilting modules}. Actually, it is a generalization of the \emph{Ingalls-Thomas bijection} \cite{IT}, which classifies functorially finite torsion classes over hereditary algebras by \emph{support tilting} modules.

The aim of this paper is to provide a similar classification of ICE-closed subcategories over hereditary algebras. More precisely, we show that such subcategories are in bijection with \emph{rigid modules},  modules without self-extensions.
The main result is summarized as follows:
\begin{theoremi}[= Theorem \ref{ice:thm:main}, Corollary \ref{cor:exc}]
  Let $Q$ be a Dynkin quiver. Then there is a bijection between the following two sets:
  \begin{enumerate}
    \item The set of ICE-closed subcategories of $\mod kQ$.
    \item The set of isomorphism classes of basic rigid $kQ$-modules.
  \end{enumerate}
  The bijections are given as follows.
  \begin{itemize}
    \item For an ICE-closed subcategory $\CC$, the corresponding rigid $kQ$-module is the basic $\Ext$-progenerator $P(\CC)$ of $\CC$.
    \item For a rigid $kQ$-module $U$, the corresponding ICE-closed subcategory is given by the category $\ccok U$ consisting of cokernels of maps in $\add U$.
  \end{itemize}
   Moreover, a subcategory of $\mod\Lambda$ is ICE-closed if and only if it is a torsion class in some wide subcategory of $\mod kQ$.
\end{theoremi}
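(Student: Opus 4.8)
The statement is an ``if and only if,'' so I would treat the two implications separately; one is formal and the other carries all the content. For the easy implication, suppose $\CC$ is a torsion class in a wide subcategory $\WW\subseteq\mod kQ$. A wide subcategory is closed under kernels, cokernels and extensions, hence is an exact abelian subcategory: kernels, images and cokernels of morphisms between objects of $\WW$ may be computed in $\mod kQ$ and automatically lie in $\WW$, and $\WW$ is closed under extensions in $\mod kQ$. Being a torsion class in $\WW$ means $\CC$ is closed under $\WW$-quotients and under extensions inside $\WW$. I would then verify the three ICE closure properties directly: for $f\colon X\to Y$ with $X,Y\in\CC$, the image $\im f$ is a $\WW$-quotient of $X$ and $\coker f$ is a $\WW$-quotient of $Y$ (the computations in $\WW$ and in $\mod kQ$ agreeing), so both lie in $\CC$; and any short exact sequence in $\mod kQ$ with outer terms in $\CC\subseteq\WW$ is an extension inside $\WW$, so its middle term lies in $\CC$. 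Hence $\CC$ is ICE-closed, and this direction needs no hypothesis on $Q$.

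For the converse, let $\CC$ be ICE-closed. Using the bijection of Theorem~\ref{ice:thm:main} (proved first), I may write $\CC=\ccok U$ for the basic rigid module $U=P(\CC)$, with pairwise non-isomorphic indecomposable summands $U_1,\dots,U_r$. The plan is to take for $\WW$ the smallest wide subcategory of $\mod kQ$ containing $\add U$, and to realise $\CC$ as the classical tilting torsion class of $U$ inside $\WW$. The key structural input, which I would isolate as a lemma, is that over a Dynkin (more generally hereditary) algebra the summands of a rigid module can be ordered into an exceptional sequence, and the wide subcategory they generate is equivalent to $\mod H$ for a hereditary algebra $H$ of rank exactly $r$. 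In this abelian category $\WW$ the module $U$ is rigid with $r$ indecomposable summands and has projective dimension at most $1$, hence is a tilting object of $\WW$.

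Granting this, I would finish as follows. The tilting torsion pair in $\WW$ has torsion class $\Fac_\WW U=\{X\in\WW:\Ext^1_\WW(U,X)=0\}$, which is by construction a torsion class in $\WW$; it remains to identify it with $\ccok U$. Cokernels of maps in $\add U$ computed in $\mod kQ$ land in $\WW$ and agree with the cokernels computed in $\WW$ (as $\WW$ is closed under cokernels), giving $\ccok U\subseteq\Fac_\WW U$. For the reverse inclusion I would take $X\in\Fac_\WW U$ and a right $\add U$-approximation $U_0\to X$, which is surjective because $X$ is generated by $U$; applying $\Hom_\WW(U,-)$ to $0\to K\to U_0\to X\to 0$ and using $\Ext^1_\WW(U,U_0)=0$ together with the surjectivity of $\Hom_\WW(U,U_0)\to\Hom_\WW(U,X)$ forces $\Ext^1_\WW(U,K)=0$, so $K\in\Fac_\WW U$; a surjection $U_1\twoheadrightarrow K$ then presents $X$ as the cokernel of $U_1\to U_0$, whence $X\in\ccok U$. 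Thus $\CC=\ccok U=\Fac_\WW U$ is a torsion class in the wide subcategory $\WW$.

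The main obstacle is the structural lemma of the second paragraph: that the wide closure of a rigid module over a hereditary algebra is a hereditary abelian category of the expected rank in which $U$ becomes a tilting object. This is exactly where hereditariness is used essentially, through exceptional sequences and perpendicular-category techniques, and it is the step I expect to demand the most care (in particular the bookkeeping that the number of summands of $U$ matches the rank of $\WW$). By comparison, the approximation argument identifying $\ccok U$ with $\Fac_\WW U$ and the entire easy implication are routine.
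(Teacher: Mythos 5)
Your argument is correct and coincides with the paper's second proof (Section \ref{sec:5}): the structural lemma you isolate is exactly Proposition \ref{prop:excmain}, obtained from the facts that the summands of a basic rigid module can be ordered into an exceptional sequence (Lemma \ref{lem:exclem1}) whose wide closure is equivalent to $\mod\Gamma$ for a hereditary artin algebra $\Gamma$ of rank $r$ (Lemma \ref{lem:exclem2}), and your identification of $\ccok U$ with the tilting torsion class $\Fac U$ inside that wide subcategory is precisely Corollary \ref{cor:exc}(1). The paper additionally gives a more elementary first proof of the bijection in Section \ref{ice:sec:3} that avoids exceptional sequences, resting instead on the identity $\Fac U\cap\Sub U=\add U$ for rigid $U$ over a hereditary algebra (Lemma \ref{ice:lem:image}) and on the Auslander--Reiten subcategory $\XX_U$.
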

Actually, this theorem holds for any representation-finite hereditary artin algebras, and can be generalized to representation-infinite case by restricting the class of ICE-closed subcategories, see Theorem \ref{ice:thm:main} for the precise statement.

In the appendix, we will count the number of ICE-closed subcategories by using several results in other papers. The result is summarized as follows.
\begin{theoremi}
  Let $Q$ be a Dynkin quiver. Then the number of ICE-closed subcategories only depends on the underlying Dynkin graph, not on the choice of an orientation (Theorem \ref{ice:thm:mutinv}). Moreover, we have an explicit formula of this number for each Dynkin type, and if $Q$ is of type $A_n$, then it is equal to the $n$-th large Schr\"oder number (Corollay \ref{ice:cor:count}).
\end{theoremi}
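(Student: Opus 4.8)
The plan is to reduce the count to basic rigid modules via Theorem \ref{ice:thm:main} and then to exploit the characterization of ICE-closed subcategories as torsion classes inside wide subcategories. First I would attach to each ICE-closed subcategory $\CC$ its \emph{wide closure} $\WW(\CC)$, the smallest wide subcategory containing $\CC$; by the main theorem $\CC$ is a torsion class in $\WW(\CC)$, and it is one whose own wide closure computed inside $\WW(\CC)$ is all of $\WW(\CC)$. Calling such a torsion class \emph{wide-sincere}, this sets up a bijection
\[
  \ice(kQ) \;\cong\; \coprod_{\WW\in\wide(kQ)} \{\text{wide-sincere torsion classes in } \WW\},
\]
so that $\#\ice(kQ)=\sum_{\WW} g(\WW)$, where $g(\WW)$ is the number of wide-sincere torsion classes of $\WW$. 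Since every wide subcategory of a hereditary algebra is itself equivalent to $\mod H$ for $H$ a product of path algebras of smaller Dynkin quivers, $g(\WW)$ depends only on the \emph{type} of $\WW$, and can be computed recursively from the function $g$ on smaller (and disconnected) Dynkin types together with the torsion-class counts there.

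With this decomposition in hand, the orientation-independence (Theorem \ref{ice:thm:mutinv}) reduces to two assertions: that $g$ is an intrinsic invariant of the type of a wide subcategory (immediate from the equivalence $\WW\simeq\mod H$), and that the multiset $\{\,\text{type of } \WW : \WW\in\wide(kQ)\,\}$ does not depend on the orientation of $Q$. For the latter I would use the BGP reflection functors at a sink or source $i$: the associated APR tilt is a derived equivalence $D^b(kQ)\simeq D^b(ks_iQ)$, hence preserves $\Ext^1$ and carries exceptional sequences to exceptional sequences, so by Corollary \ref{cor:exc} it matches rigid $kQ$-modules having no $S_i$-summand with rigid $ks_iQ$-modules having no $S_i$-summand. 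The remaining rigid modules are those with an $S_i$-summand, and I would analyze these by induction on the number of vertices, splitting off $S_i$ and using that $S_i$ is projective (resp. injective) at a sink (resp. source). I expect this bookkeeping to be the main obstacle: the reflection functor does not preserve module-ness on the nose---the simple $S_i$ is created or destroyed---so the argument must control exactly the rigid modules gained and lost under reflection and show that the two contributions balance, which is where the root-system combinatorics enters and forces the count to be orientation-invariant.

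Finally, for the explicit formula and the type $A_n$ case (Corollary \ref{ice:cor:count}), I would fix a convenient orientation (permissible by mutation invariance) and evaluate the recursion type by type. For type $A_n$ with linear orientation the indecomposables are the interval modules $M[i,j]$, and two such are $\Ext$-orthogonal in both directions precisely under an explicit combinatorial compatibility of the intervals; a basic rigid module is then a family of pairwise compatible intervals, which I would put in bijection with dissections of a convex polygon (equivalently with Schr\"oder paths). Since these objects are counted by the large Schr\"oder number $S_n$, this yields the type $A$ statement; concretely I would check that the recursion reproduces $S_0=1,\ S_1=2,\ S_2=6,\ S_3=22$ and matches the generating function $s(x)=\bigl(1-x-\sqrt{1-6x+x^2}\,\bigr)/(2x)$. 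For the remaining Dynkin types the same wide-subcategory recursion is a finite computation, carried out case by case to produce the stated closed numbers.
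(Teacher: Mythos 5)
Your overall reduction to counting basic rigid modules is also the paper's starting point, but from there the routes diverge and yours has two genuine gaps. First, on orientation-invariance: you correctly match, via the reflection functor at a sink $v$, the rigid modules without an $S(v)$-summand on the two sides, but you explicitly leave open the part that actually requires an idea --- the rigid modules containing $S(v)$ --- deferring it to ``bookkeeping'' and ``root-system combinatorics''. The paper closes this with a purely homological argument: if $U=X\oplus S(v)$ is basic rigid with $X\in\mod_v kQ$, then $\Hom(X,S(v))=\Ext^1(X,S(v))=0$ (a nonzero map to the projective simple $S(v)$ would split off a summand), so $X$ lies in the perpendicular category ${}^\perp S(v)$, which is a wide subcategory equivalent to $\mod kQ_v$ for the quiver $Q_v$ obtained by deleting $v$; this gives a bijection $\rigid^i_{\la v\ra}(kQ)\iso\rigid^{i-1}(kQ_v)$, and since $(\mu_vQ)_v=Q_v$ the two contributions are identified outright --- no induction on vertices, no balancing, no root combinatorics. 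Your alternative decomposition by wide closures (``wide-sincere torsion classes'') is plausible but adds a layer you never cash in: the recursion for $g(\WW)$ is not actually set up, and it is not needed, since one can grade rigid modules by the number of indecomposable summands and compare the two quivers directly.

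Second, on the explicit formulas: the plan of ``fixing an orientation and evaluating the recursion type by type'' cannot produce the closed formula for type $D_n$, which is an infinite family, and even in type $A_n$ the bijection between pairwise $\Ext$-orthogonal interval families and polygon dissections is asserted rather than proved (it is essentially the alternative argument the paper attributes to \cite{enomono}, so it can be made to work, but it is real content, not a remark). The paper instead embeds the Riedtmann--Schofield complex $\Delta(Q)$ for a bipartite orientation into the cluster complex $\Delta(X_n)$ as the full subcomplex on positive roots, so that $\#\rigid^i(kQ)$ is the number of $(i-1)$-faces containing no negative simple root, and then reads off all the formulas uniformly from Krattenthaler's $F$-triangle. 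If you keep your combinatorial route for type $A$, you still need a separate argument or citation for types $D$ and $E$.
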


We expect that there is a hidden theory which generalizes this paper to non-hereditary case, as \cite{AIR} generalizes \cite{IT}.

\begin{org}
  This paper is organized as follows.
  In Section \ref{ice:sec:2}, we give basic definitions and state a main result Theorem \ref{ice:thm:main}.
  In Section \ref{ice:sec:3}, we give a proof of Theorem \ref{ice:thm:main}.
  In Section \ref{ice:sec:4}, we consider the relation between ICE-closed subcategories, torsion classes and wide subcategories in detail via rigid modules.
  In Section \ref{sec:5}, we give another proof of Theorem \ref{ice:thm:main} using exceptional sequences, and discuss its consequences.
  In the appendix, we count the number of ICE-closed subcategories for each Dynkin type.
\end{org}

\begin{conv}
  Throughout this paper, \emph{all subcategories are assumed to be full and closed under isomorphisms, direct sums and direct summands}. An \emph{artin $R$-algebra} is an $R$-algebra over a commutative artinian ring $R$ which is finitely generated as an $R$-module. We often omit the base ring $R$, and simply call it an \emph{artin algebra}.

  For an artin algebra $\Lambda$, we denote by $\mod\Lambda$ the category of finitely generated right $\Lambda$-modules. All modules are finitely generated right modules. For a collection $\CC$ of $\Lambda$-modules, we denote by $\add \CC$ the subcategory of $\mod\Lambda$ consisting of direct summands of finite direct sums of objects in $\CC$.
  A module $M$ is called \emph{basic} if there is a decomposition $M = \bigoplus_{i=1}^n M_i$ such that each $M_i$ is indecomposable and pairwise non-isomorphic. For a module $M$, we denote by $|M|$ the number of non-isomorphic indecomposable direct summands of $M$.
\end{conv}

\section{Basic definitions and the main result}\label{ice:sec:2}
In this section, we give basic definitions and introduce some notation, and state our main result.
First of all, recall that a module $M \in \mod\Lambda$ over an artin algebra $\Lambda$ is \emph{rigid} if $\Ext_\Lambda^1(M,M) = 0$ holds.
Then we define several conditions on the subcategory of $\mod\Lambda$.
\begin{definition}
  Let $\Lambda$ be an artin algebra and $\CC$ a subcategory of $\mod\Lambda$.
  \begin{enumerate}
    \item $\CC$ is \emph{closed under extensions} if for every short exact sequence in $\mod\Lambda$
    \[
    \begin{tikzcd}
      0 \rar & L \rar & M \rar & N \rar & 0,
    \end{tikzcd}
    \]
    we have that $L,N \in \CC$ implies $M \in \CC$
    \item $\CC$ is \emph{closed under quotients (resp. submodules)} if for every short exact sequence in $\mod\Lambda$
    \[
    \begin{tikzcd}
      0 \rar & L \rar & M \rar & N \rar & 0,
    \end{tikzcd}
    \]
    we have that $M \in \CC$ implies $N \in \CC$ (resp. $L \in \CC$).
    \item $\CC$ is \emph{closed under cokernels (resp. images)} if for every map $f \colon M \to N$ with $M,N \in \CC$, we have $\coker f \in \CC$ (resp. $\im f \in \CC$).
    \item $\CC$ is a \emph{torsion class} if $\CC$ is closed under quotients and extensions.
    \item $\CC$ is a \emph{wide subcategory} if $\CC$ is closed under kernels, cokernels and extensions.
    \item $\CC$ is \emph{image-cokernel-extension-closed}, abbreviated by \emph{ICE-closed}, if $\CC$ is closed under images, cokernels and extensions.
    \item $\CC$ is \emph{cokernel-extension-closed}, abbreviated by \emph{CE-closed}, if $\CC$ is closed under cokernels and extensions.
  \end{enumerate}
\end{definition}
Then clearly all torsion classes and wide subcategories are (I)CE-closed, thus ICE-closed subcategories can be seen as a generalization of these two classes.

To an extension-closed subcategory of $\mod\Lambda$, we can associate a rigid module by taking the \emph{$\Ext$-progenerator}.
\begin{definition}
  Let $\Lambda$ be an artin algebra and $\CC$ an extension-closed subcategory of $\mod\Lambda$.
  \begin{enumerate}
    \item An object $X \in \CC$ is \emph{$\Ext$-projective in $\CC$} if $\Ext_\Lambda^1(X,\CC) = 0$ holds. We denote by $\PP(\CC)$ the subcategory of $\CC$ consisting of all the $\Ext$-projective objects in $\CC$.
    \item $\CC$ has \emph{enough $\Ext$-projectives} if for every object $X \in \CC$, there is a short exact sequence
    \[
    \begin{tikzcd}
      0 \rar & Y \rar & P \rar & X \rar & 0
    \end{tikzcd}
    \]
    with $P \in \PP(\CC)$ and $Y \in \CC$.
    \item An object $P \in \CC$ is an \emph{$\Ext$-progenerator} if $\add P = \PP(\CC)$ and $\CC$ has enough $\Ext$-projectives.
  \end{enumerate}
\end{definition}

Now we are ready to state our main result. Throughout this paper, we will use the following notations for an artin algebra $\Lambda$.
\begin{itemize}
  \item $\rigid \Lambda$ denotes the set of isomorphism classes of basic rigid $\Lambda$-modules.
  \item $\ice\Lambda$ denotes the set of ICE-closed subcategories of $\mod\Lambda$.
  \item $\icep \Lambda$ denotes the set of ICE-closed subcategories of $\mod\Lambda$ with enough $\Ext$-projectives.
  \item For an extension-closed subcategory $\CC$ of $\mod\Lambda$ with an $\Ext$-progenerator, we denote by $P(\CC)$ the unique basic $\Ext$-progenerator of $\CC$.
  \item For a $\Lambda$-module $U$, we denote by $\ccok U$ the subcategory of $\mod \Lambda$ consisting of cokernels of maps in $\add U$.
  \item For a collection $\UU$ of $\Lambda$-modules, we denote by $\Fac \UU$ (resp. $\Sub \UU$) the subcategory of $\mod \Lambda$ consisting of quotients (resp. submodules) of objects in $\add \UU$.
\end{itemize}

\begin{theorem}\label{ice:thm:main}
  Let $\Lambda$ be a hereditary artin algebra. Then we have the following bijections
  \[
  \begin{tikzcd}
    \rigid \Lambda \rar[shift left, "\ccok"] & \icep\Lambda. \lar[shift left, "P"]
  \end{tikzcd}
  \]
  Moreover, if $\Lambda$ is representation-finite, then every CE-closed subcategory is automatically ICE-closed, and $\icep\Lambda= \ice\Lambda$ holds.
\end{theorem}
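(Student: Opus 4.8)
The plan is to check directly that the assignments $\ccok$ and $P$ are mutually inverse, and then to dispose of the representation-finite statement. For $\CC \in \icep\Lambda$ the module $P(\CC)$ is automatically rigid, since it lies in $\CC$ and is $\Ext$-projective, so $\Ext_\Lambda^1(P(\CC),P(\CC)) \subseteq \Ext_\Lambda^1(P(\CC),\CC) = 0$. The identity $\CC = \ccok P(\CC)$ is the easy half: writing $U = P(\CC)$, one has $\ccok U \subseteq \CC$ because $\add U \subseteq \CC$ and $\CC$ is closed under cokernels, while for the converse I would take $X \in \CC$ and use twice that $\CC$ has enough $\Ext$-projectives, choosing $0 \to Y \to P_0 \to X \to 0$ and $0 \to Y' \to P_1 \to Y \to 0$ with $P_0,P_1 \in \add U$; the composite $P_1 \to Y \hookrightarrow P_0$ then has cokernel $X$, so $X \in \ccok U$. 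This already gives $\ccok\circ P = \mathrm{id}$ and the injectivity of $P$.

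It remains to prove that $\ccok U \in \icep\Lambda$ with $\PP(\ccok U) = \add U$ for every rigid $U$, which yields $P(\ccok U) = U$ and completes the bijection. The key input is the vanishing $\Ext_\Lambda^1(U,\Fac U) = 0$: for $Z \in \Fac U$ a surjection $U' \twoheadrightarrow Z$ with kernel $K$ exhibits $\Ext_\Lambda^1(U,Z)$ as a quotient of $\Ext_\Lambda^1(U,U') = 0$, using $\Ext_\Lambda^2 = 0$ for the hereditary algebra $\Lambda$. In particular $\Ext_\Lambda^1(U,\ccok U) = 0$, so $\add U \subseteq \PP(\ccok U)$, and this vanishing supplies the lifts needed throughout. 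Closure under cokernels then follows by lifting a map $U_1 \to Y$ along a surjection $U_3 \twoheadrightarrow Y$ and rewriting $\coker f$ as the cokernel of a map $U_1 \oplus U_2 \to U_3$; closure under extensions is the analogous horseshoe argument, assembling a presentation of the middle term from those of the outer terms and one lift. Granting closure under images, enough $\Ext$-projectives is immediate, since $0 \to \im a \to U_1 \to X \to 0$ resolves $X = \coker(a\colon U_0 \to U_1)$, and then $\PP(\ccok U) = \add U$ because any $\Ext$-projective splits off such a sequence.

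The main obstacle is closure under images, equivalently the lemma that $\im c \in \ccok U$ whenever $c\colon V \to W$ is a map in $\add U$. Since $\Lambda$ is hereditary one obtains the symmetric vanishings $\Ext_\Lambda^1(U,\im c) = 0$ and $\Ext_\Lambda^1(\im c,U) = 0$ from the two short exact sequences defining $\im c$, together with $\im c \in \Fac U \cap \Sub U$. I would complete $U$ to a tilting module $T = U \oplus U'$ (Bongartz completion); the same computations upgrade these to $\Ext_\Lambda^1(T,\im c) = \Ext_\Lambda^1(\im c,T) = 0$, which makes $\im c$ an $\Ext$-projective object of the torsion class $\Fac T$ and hence puts $\im c \in \add T$. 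The delicate point I expect to be hardest is to rule out summands from $\add U'$, which is precisely where membership in $\Fac U \cap \Sub U$ must be exploited, giving $\im c \in \add U$. Once this is in hand, general image closure is formal: for $f\colon X \to Y$ I reduce along a lift to the image of a map between objects of $\add U$, and then present $\im f$ as $\coker(U_2 \to U^{*})$ with $U^{*} \in \add U$, which lies in $\ccok U$ by closure under cokernels. (One could instead identify $\ccok U$ with the torsion class generated by $U$ inside the smallest wide subcategory containing $U$, where images coincide with those of $\mod\Lambda$ and are automatic for a torsion class; but this risks circularity with the later structural results, so I would prefer the direct argument.)

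Finally, suppose $\Lambda$ is representation-finite. Then every extension-closed subcategory is functorially finite and admits an $\Ext$-progenerator, so it automatically has enough $\Ext$-projectives; hence $\ice\Lambda \subseteq \icep\Lambda$ and the two sets coincide. If $\CC$ is merely CE-closed, the same finiteness provides enough $\Ext$-projectives, and the presentation argument of the first paragraph—which used only closure under cokernels—gives $\CC = \ccok P(\CC)$; as $\ccok$ of a rigid module is ICE-closed by the main direction, $\CC$ is itself ICE-closed, so CE-closed and ICE-closed coincide in this case.
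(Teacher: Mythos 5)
Your overall architecture matches the paper's: show $P(\CC)$ is rigid and $\CC=\ccok P(\CC)$ for $\CC\in\icep\Lambda$, show $\ccok U$ is ICE-closed with $\PP(\ccok U)=\add U$ for rigid $U$, and reduce everything to the single hard point that images of maps between objects of $\add U$ stay in $\ccok U$. The surrounding steps (the vanishing $\Ext^1_\Lambda(U,\Fac U)=0$, closure under cokernels and extensions via lifts, the deduction of enough $\Ext$-projectives, and the representation-finite case) are correct as sketched. However, there is a genuine gap exactly at the point you yourself flag as ``the delicate point I expect to be hardest.'' Your route via the Bongartz completion $T=U\oplus U'$ correctly places $\im c$ in $\add T$, but it gives no mechanism for excluding indecomposable summands from $\add U'$: knowing that $\im c$ lies in $\add T$ and in $\Fac U\cap\Sub U$ does not by itself force $\im c\in\add U$, and you do not say how the $\Sub U$ condition is to be ``exploited.'' This missing step is precisely the content of the paper's key Lemma~\ref{ice:lem:image}, which asserts $\Fac U\cap\Sub U=\add U$, and its proof does not pass through $\add T$ at all: one takes a left $(\add U)$-approximation $\varphi\colon X\to U^X$, observes that $\varphi$ is injective because $X\in\Sub U$, shows $\Ext^1_\Lambda(C,U)=0$ for $C=\coker\varphi$ from surjectivity of $\Hom(\varphi,U)$ and rigidity of $U$, and then uses heredity together with a surjection $U_0\twoheadrightarrow X$ to get $\Ext^1_\Lambda(C,X)=0$, so the sequence splits and $X\in\add U$. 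Your detour through $\add T$ is therefore not just incomplete but arguably a dead end, since the remaining step is no easier after it than before; the approximation argument is what actually does the work.

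Two smaller points. First, for $P(-)\colon\icep\Lambda\to\rigid\Lambda$ to be well defined you need $\PP(\CC)$ to be $\add$ of a single module, i.e.\ that $\CC$ has only finitely many indecomposable $\Ext$-projectives; the paper gets this from the Bongartz completion bound $|P(\CC)|\le|\Lambda|$, and your write-up silently assumes it. Second, in the reduction of general image-closure to maps in $\add U$, lifting $U_0\to Y$ through a presentation $U_3\twoheadrightarrow Y$ changes the image (one obtains $(\im g+K)/K$ rather than $\im g$), so the bookkeeping needs the presentation of $Y$ to have kernel in $\add U$ (again the key lemma) before $\im f$ can be written as a cokernel of a map in $\add U$; the paper handles this with an explicit double pullback diagram. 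Neither of these is fatal, but the absence of a proof of $\Fac U\cap\Sub U=\add U$ is.
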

This bijection extends a bijection between support ($\tau$-)tilting modules and functorially finite torsion classes given in \cite{IT} or \cite{AIR} in the following sense. Let $\Lambda$ be a hereditary artin algebra. It is known that a torsion class is functorially finite if and only if it has enough $\Ext$-projectives. Then the following diagram commutes, and the horizontal maps are bijective.
\[
\begin{tikzcd}
  \rigid \Lambda \rar[shift left, "\ccok"] & \icep\Lambda \lar[shift left, "P"] \\
  \stilt \Lambda \uar[hookrightarrow] \rar[shift left, "\Fac"] & \ftors\Lambda \lar[shift left, "P"] \uar[hookrightarrow]
\end{tikzcd}
\]
Here the bottom bijections were those given in \cite{IT} or \cite{AIR}. See Proposition \ref{ice:prop:tiltcompati} for the detail.

\begin{remark}
  ICE-closed subcategories generalize both torsion classes and wide subcategories. Another generalization of these two classes was introduced in \cite{enomono}, \emph{right Schur subcategories} (actually the dual was studied in the paper). Every ICE-closed subcategory is a right Schur, but the converse does nod hold in general. If $\Lambda$ is Nakayama, then these coincides by \cite[Theorem 6.1]{enomono}.
  In \cite{enomono}, we classify right Schur subcategories in any length abelian category by using \emph{simple objects} in them. This is in contrast with our use of $\Ext$-projectives.
\end{remark}

\section{Proof of the main theorem}\label{ice:sec:3}
In this section, we give a proof of Theorem \ref{ice:thm:main}. \emph{Throughout this section, we denote by $\Lambda$ a hereditary artin algebra}.

First we give a map $P(-)\colon \icep\Lambda \to \rigid\Lambda$.
\begin{proposition}\label{ice:prop:icerigid}
  Let $\CC$ be an CE-closed subcategory of $\mod\Lambda$. Then the following hold.
  \begin{enumerate}
    \item Every $\Ext$-projective object in $\CC$ is rigid.
    \item There are only finitely many indecomposable $\Ext$-projective objects in $\CC$ up to isomorphism.
    \item If $\CC$ has enough $\Ext$-projectives, then it has an $\Ext$-progenerator $P(\CC)$, and $\CC = \ccok P(\CC)$ holds.
  \end{enumerate}
\end{proposition}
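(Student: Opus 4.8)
The plan is to handle the three parts in order, with the first two feeding into the construction in the third. Part (1) is immediate from unwinding the definition: if $X \in \CC$ is $\Ext$-projective then $\Ext^1_\Lambda(X,\CC) = 0$, and since $X$ itself lies in $\CC$ this already forces $\Ext^1_\Lambda(X,X) = 0$, so $X$ is rigid. No hereditary hypothesis enters here.

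For part (2), the key observation is that \emph{any} finite set of pairwise non-isomorphic indecomposable $\Ext$-projectives $P_1,\dots,P_m$ assembles into a rigid module. Indeed, for each pair $i,j$ the object $P_i$ is $\Ext$-projective while $P_j \in \CC$, so $\Ext^1_\Lambda(P_i,P_j) = 0$; summing over all $i,j$ gives $\Ext^1_\Lambda(\bigoplus_i P_i, \bigoplus_j P_j) = 0$. I would then invoke the standard bound that a rigid module over a hereditary artin algebra with $n$ simple modules has at most $n$ non-isomorphic indecomposable summands — the justification being linear independence of the classes $[P_i]$ in the Grothendieck group via the Euler form. Hence $m \le n$, so there are at most $n$ indecomposable $\Ext$-projectives up to isomorphism.

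For part (3), list all indecomposable $\Ext$-projectives $P_1,\dots,P_m$ supplied by (2) and set $P(\CC) = \bigoplus_{i=1}^{m} P_i$. Since summands of an $\Ext$-projective are again $\Ext$-projective, every object of $\PP(\CC)$ is a sum of the $P_i$, so $\add P(\CC) = \PP(\CC)$; together with the enough-$\Ext$-projectives hypothesis this exhibits $P(\CC)$ as an $\Ext$-progenerator, and it is the unique basic one because it is determined by $\PP(\CC)$. It remains to show $\CC = \ccok P(\CC)$. The inclusion $\ccok P(\CC) \subseteq \CC$ is immediate from CE-closedness, as a cokernel of a map between objects of $\add P(\CC) \subseteq \CC$ lies in $\CC$. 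For the reverse inclusion, given $X \in \CC$ I would apply enough $\Ext$-projectives twice: first pick $0 \to Y \to P^0 \to X \to 0$ with $P^0 \in \PP(\CC)$ and $Y \in \CC$, then pick a surjection $P^1 \to Y$ with $P^1 \in \PP(\CC)$. The composite $g\colon P^1 \to P^0$ then has image exactly $Y$ (because $P^1 \to Y$ is surjective and $Y \to P^0$ injective), so $\coker g \cong P^0/Y \cong X$, giving $X \in \ccok P(\CC)$.

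The main obstacle I anticipate is the finiteness in part (2): it is not formal and is precisely where the hereditary hypothesis on $\Lambda$ is genuinely used, through the bound on the number of summands of a rigid module. By contrast, part (1) and both inclusions in part (3) are essentially formal once that finiteness and the enough-$\Ext$-projectives hypothesis are available; the only point requiring care in (3) is verifying that the composite $g$ has image $Y$, which follows directly from the exactness of the two chosen sequences.
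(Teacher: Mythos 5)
Your proposal is correct and follows essentially the same route as the paper: (1) is definitional, (2) reduces to the standard bound on the number of indecomposable summands of a rigid module over a hereditary artin algebra, and (3) combines CE-closedness for one inclusion with a two-step application of enough $\Ext$-projectives for the other. The only (minor) divergence is in (2), where you justify the bound $m \le |\Lambda|$ by linear independence of the classes $[P_i]$ in the Grothendieck group via the Euler form, while the paper instead invokes the Bongartz completion of the partial tilting module $\bigoplus_i P_i$; both are standard and equally valid here.
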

\begin{proof}
  (1)
  Clear from definition.

  (2)
  We claim that there are at most $|\Lambda|$ indecomposable $\Ext$-projectives in $\CC$. Let $M_1,\dots,M_m$ be pairwise non-isomorphic $\Ext$-projectives in $\CC$. Then clearly $M:= M_1 \oplus \cdots \oplus M_m$ is basic rigid, or partial tilting since $\Lambda$ is hereditary. Then by taking the Bongartz  completion, there is a $\Lambda$-module $N$ such that $M \oplus N$ is a basic tilting $\Lambda$-module (see \cite[Lemma VI.2.4]{ASS} for the detail). It follows that $m = |M| \leq |M \oplus N| = |\Lambda|$.

  (3)
  Since $\CC$ has enough $\Ext$-projectives, (2) implies that $\CC$ has an $\Ext$-progenerator $P(\CC)$.
  Since $\CC$ is closed under cokernels and $P(\CC) \in \CC$, clearly $\CC\supset \ccok P(\CC)$ holds. Conversely, we have $\CC \subset \ccok P(\CC)$ since $\CC$ has enough $\Ext$-projectives.
\end{proof}

The following lemma is essential in our proof. This says that \emph{$\add U$ is closed under images if $\Lambda$ is herediatry and $U$ is rigid}.
\begin{lemma}\label{ice:lem:image}
  Let $\Lambda$ be a hereditary artin algebra and $U$ a rigid $\Lambda$-module. Then $\Fac U \cap \Sub U = \add U$ holds.
\end{lemma}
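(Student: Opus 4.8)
The inclusion $\add U \subseteq \Fac U \cap \Sub U$ is immediate, since every object of $\add U$ is a direct summand --- hence both a quotient and a submodule --- of an object of $\add U$. The plan is therefore to prove the reverse inclusion: given $X \in \Fac U \cap \Sub U$, I would show that $X$ is a direct summand of some object of $\add U$, and hence lies in $\add U$. The strategy is to present $X$ as a quotient of $\add U$ via a \emph{carefully chosen} short exact sequence and then to prove that this sequence splits.

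First I would pick the presentation well. Since $\add U$ is contravariantly finite, $X$ admits a right $\add U$-approximation $p \colon U_0 \to X$; because $X \in \Fac U$, some surjection onto $X$ from $\add U$ factors through $p$, so $p$ is surjective. Writing $K = \ker p$, the crucial first computation is that $\Ext_\Lambda^1(U, K) = 0$: applying $\Hom_\Lambda(U, -)$ to $0 \to K \to U_0 \to X \to 0$ yields an exact sequence in which $\Hom_\Lambda(U, U_0) \to \Hom_\Lambda(U, X)$ is surjective (this is precisely the approximation property) while the next term $\Ext_\Lambda^1(U, U_0)$ vanishes because $U$ is rigid. I regard the choice of $p$ as an approximation, rather than an arbitrary surjection, as the key idea: it is exactly what forces $\Ext_\Lambda^1(U, K)$ to vanish.

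The second step transfers this vanishing from $U$ to $X$ using the hypothesis $X \in \Sub U$. Fix an embedding $i \colon X \hookrightarrow U_1$ with $U_1 \in \add U$ and cokernel $C$. Since $\Ext_\Lambda^1(U, K) = 0$, additivity gives $\Ext_\Lambda^1(U_1, K) = 0$, and since $\Lambda$ is hereditary we have $\Ext_\Lambda^2(C, K) = 0$. Applying $\Hom_\Lambda(-, K)$ to $0 \to X \to U_1 \to C \to 0$ then sandwiches $\Ext_\Lambda^1(X, K)$ between these two vanishing groups, forcing $\Ext_\Lambda^1(X, K) = 0$.

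Finally, the extension class of $0 \to K \to U_0 \to X \to 0$ lives in $\Ext_\Lambda^1(X, K) = 0$, so the sequence splits and $X$ is a direct summand of $U_0 \in \add U$; thus $X \in \add U$, completing the argument. The main obstacle --- and the unique point where $X \in \Fac U$, $X \in \Sub U$, and heredity are all simultaneously essential --- is the middle passage: $\Fac U$ supplies the approximation controlling $K$ on the $U$-side, $\Sub U$ supplies the embedding through which that control is propagated to $X$, and the hereditary hypothesis removes the $\Ext^2$ term that would otherwise obstruct the transfer.
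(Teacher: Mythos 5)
Your proof is correct and is precisely the dual of the paper's argument: the paper takes a left $(\add U)$-approximation $X \hookrightarrow U^X$ (injective since $X \in \Sub U$), uses the approximation property plus rigidity to get $\Ext_\Lambda^1(C,U)=0$ for the cokernel $C$, and then uses heredity together with a surjection $U_0 \twoheadrightarrow X$ to conclude $\Ext_\Lambda^1(C,X)=0$ and split that sequence, whereas you mirror every step to split the presentation $0 \to K \to U_0 \to X \to 0$ instead. Since the same three ingredients (approximation, rigidity, heredity) appear in exactly swapped roles, this is essentially the same proof.
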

\begin{proof}
  Clearly we have $\add U \subset \Fac U \cap \Sub U$. Conversely, let $X \in \Fac U \cap \Sub U$. Take a left $(\add U)$-approximation $\varphi \colon X \to U^X$ with $U^X \in \add U$, which is an injection by $X \in \Sub U$.
  Then we have the following commutative exact diagram in $\mod \Lambda$:
  \[
  \begin{tikzcd}
    &U_0 \dar[twoheadrightarrow] \\
    0 \rar & X \rar["\varphi"]& U^X \rar & C \rar & 0 \\
  \end{tikzcd}
  \]
  By applying $\Hom(-,U)$, we obtain an exact sequence
  \[
  \begin{tikzcd}
    \Hom_\Lambda(U^X,U) \rar["{(-)\circ\varphi}"] &  \Hom_\Lambda(X,U) \rar & \Ext_\Lambda^1(C,U) \rar &  \Ext_\Lambda^1(U^X,U).
  \end{tikzcd}
  \]
  Since $\varphi$ is a left $(\add U)$-approximation, $(-)\circ\varphi$ is a surjection.
  In addition, $\Ext_\Lambda^1(U^X,U)$ vanishes since $U$ is rigid, hence $\Ext_\Lambda^1(C,U) = 0$. On the other hand, since $\Lambda$ is hereditary, we have an exact sequence
  \[
  \begin{tikzcd}
    \Ext_\Lambda^1(C,U_0) \rar & \Ext_\Lambda^1(C,X) \rar & 0.
  \end{tikzcd}
  \]
  Since we have $\Ext_\Lambda^1(C,U_0) = 0$, we obtain $\Ext_\Lambda^1(C,X) = 0$. It follows that the short exact sequence $0 \to X \to U^X \to C \to 0$ splits, which implies $X \in \add U$.
\end{proof}

Our next aim is to show that $\ccok U$ is ICE-closed if $U$ is rigid. We will make use of the subcategory $\XX_U$ associated to $U$, which was introduced by Auslander-Reiten \cite{applications}.
\begin{definition}
  Let $\Lambda$ be an artin algebra and $U$ a $\Lambda$-module with $\Ext_\Lambda^{>0}(U,U) = 0$. Then we denote by $\XX_U$ a subcategory of $\mod\Lambda$ consisting of modules $X$ such that there is an exact sequence
  \[
  \cdots \xrightarrow{f_2} U_1 \xrightarrow{f_1} U_0 \xrightarrow{f_0} X \to 0
  \]
  with $\Ext_\Lambda^{>0}(U,\im f_i) = 0$ for all $i \geq 0$.
\end{definition}
We borrow the following lemma from \cite{applications}.
\begin{lemma}[{\cite[Proposition 5.1]{applications}}]\label{ice:lem:ar}
  Let $\Lambda$ be an artin algebra and $U$ a $\Lambda$-module with $\Ext_\Lambda^{>0}(U,U)\allowbreak = 0$. Then $\XX_U$ is closed under extensions, has an $\Ext$-progenerator $U$, and is closed under mono-cokernels, that is, for every short exact sequence
  \[
  \begin{tikzcd}
    0 \rar & L \rar & M \rar & N \rar & 0
  \end{tikzcd}
  \]
  in $\mod\Lambda$, if $L$ and $M$ belong to $\XX_U$, then so does $N$.
\end{lemma}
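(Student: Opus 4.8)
The plan is to prove the three assertions in turn, treating closure under extensions as the engine that drives the other two. The key preliminary observation is that applying the defining condition at $i=0$, where $\im f_0 = X$, forces $\Ext_\Lambda^{>0}(U,X) = 0$ for every $X \in \XX_U$; in particular $\Ext_\Lambda^1(U,\XX_U) = 0$, so $U$ will be $\Ext$-projective in $\XX_U$ once we note $U \in \XX_U$. I would also record that $\XX_U$ is closed under syzygies: shifting an admissible $\add U$-resolution of $X$ one step to the left exhibits $\ker f_0 = \im f_1$ together with a resolution whose images all occur among the original ones, hence admissible, so $\ker f_0 \in \XX_U$.

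For closure under extensions, given a short exact sequence $0 \to L \to M \to N \to 0$ with $L,N \in \XX_U$, I would run the horseshoe construction on admissible resolutions of $L$ and $N$. The only place the hypotheses enter is the lifting of the augmentation $U_0^N \to N$ through $M \to N$: the obstruction lies in $\Ext_\Lambda^1(U_0^N, L)$, which vanishes since $U_0^N \in \add U$ and $\Ext_\Lambda^{>0}(U,L) = 0$. This yields a surjection $U_0^L \oplus U_0^N \to M$ whose kernel $K^M$ sits in a short exact sequence $0 \to K^L \to K^M \to K^N \to 0$ relating first syzygies. Iterating, one obtains an $\add U$-resolution of $M$ in which the $i$-th image is an extension of the corresponding images of $L$ and $N$; the long exact sequence in $\Ext_\Lambda^\ast(U,-)$ then propagates the vanishing down the whole resolution, giving $M \in \XX_U$.

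The remaining two statements follow formally. For the progenerator claim, $\add U \subset \XX_U$ via the trivial resolution, and $U$ is $\Ext$-projective by the opening remark; conversely, if $X \in \XX_U$ is $\Ext$-projective, then $0 \to \ker f_0 \to U_0 \to X \to 0$ with $\ker f_0 \in \XX_U$ splits because $\Ext_\Lambda^1(X,\ker f_0) = 0$, whence $X \in \add U$, and the same sequence shows $\XX_U$ has enough $\Ext$-projectives. For closure under mono-cokernels, take $0 \to L \to M \to N \to 0$ with $L,M \in \XX_U$; the long exact sequence gives $\Ext_\Lambda^{>0}(U,N) = 0$, and choosing a surjection $U_0 \to M$ with $U_0 \in \add U$ and $\ker(U_0 \to M) \in \XX_U$, the composite $U_0 \to M \to N$ has kernel $K$ fitting into $0 \to \ker(U_0 \to M) \to K \to L \to 0$. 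Hence $K \in \XX_U$ by closure under extensions, and splicing an admissible resolution of $K$ onto $0 \to K \to U_0 \to N \to 0$ exhibits $N \in \XX_U$.

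The main obstacle is the bookkeeping in the extension-closure step: one must check that the horseshoe can be performed admissibly at every stage, not merely for the augmentation, so that each syzygy of $M$ is genuinely an extension of the corresponding syzygies of $L$ and $N$ and the Ext-vanishing is inherited at all levels. Once this inductive step is in place, syzygy closure and the established closure under extensions make the progenerator and mono-cokernel assertions routine.
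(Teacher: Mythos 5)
Your argument is correct and complete: the observation that $\im f_0 = X$ forces $\Ext_\Lambda^{>0}(U,X)=0$, syzygy closure, the horseshoe construction (with the lift of $U_0^N \to N$ obstructed only by the vanishing group $\Ext^1_\Lambda(U_0^N,L)$), and the splicing argument for mono-cokernels are exactly the standard ingredients. The paper gives no proof of this lemma but simply cites \cite[Proposition 5.1]{applications}, and your proof is essentially the one found there, so there is nothing to add.
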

The following is basic properties of $\XX_U$ in our hereditary setting. In particular, we have $\XX_U = \ccok U$ for a rigid module $U$ over a hereditary algebra $\Lambda$.
\begin{proposition}\label{ice:prop:rigidice}
  Let $\Lambda$ be a hereditary artin algebra and $U$ a rigid $\Lambda$-module.
  \begin{enumerate}
    \item The following are equivalent for $X \in \mod\Lambda$.
      \begin{enumerate}
        \item $X$ belongs to $\XX_U$.
        \item $X$ belongs to $\ccok U$.
        \item There is an short exact sequence of the following form with $U_1$ and $U_0$ in $\add U$.
        \[
        \begin{tikzcd}
          0 \rar & U_1 \rar & U_0 \rar & X \rar & 0
        \end{tikzcd}
        \]
      \end{enumerate}
     \item $\ccok U$ is an ICE-closed subcategory of $\mod\Lambda$.
     \item $\ccok U$ has an $\Ext$-progenerator $U$.
  \end{enumerate}
\end{proposition}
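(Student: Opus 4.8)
The plan is to establish (1) first, deduce (3) immediately, and then prove (2) by reducing everything to closure under images, which I expect to be the only real difficulty.

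For (1) I would run the cycle (a)$\Rightarrow$(b)$\Rightarrow$(c)$\Rightarrow$(a). The first implication is free: the defining sequence of $X\in\XX_U$ contains $U_1\xrightarrow{f_1}U_0\to X\to 0$, so $X=\coker f_1\in\ccok U$. For (b)$\Rightarrow$(c) I would write $X=\coker(f\colon U'\to U'')$ and observe that $\im f$ lies in $\Fac U\cap\Sub U$, hence in $\add U$ by Lemma \ref{ice:lem:image}; then $0\to\im f\to U''\to X\to 0$ is a sequence of the required shape. For (c)$\Rightarrow$(a), applying $\Hom_\Lambda(U,-)$ to $0\to U_1\to U_0\to X\to 0$ and using $\Ext_\Lambda^1(U,U_0)=0$ together with $\Ext_\Lambda^{\ge 2}=0$ (hereditarity) gives $\Ext_\Lambda^{>0}(U,X)=0$, so the two-term sequence, padded by zeros, already exhibits $X\in\XX_U$. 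Granting (1), statement (3) is then immediate from Lemma \ref{ice:lem:ar}, which furnishes $U$ as an $\Ext$-progenerator of $\XX_U=\ccok U$.

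For (2), extension-closedness is part of Lemma \ref{ice:lem:ar}. I would then note that closure under cokernels follows from closure under images: for $f\colon X\to Y$ the sequence $0\to\im f\to Y\to\coker f\to 0$ and the mono-cokernel closure of $\XX_U$ (Lemma \ref{ice:lem:ar}) force $\coker f\in\ccok U$. So the crux is closure under images, and here is the argument I would use. Given $f\colon X\to Y$, fix a presentation $0\to U_1\to U_0\xrightarrow{\pi}X\to 0$ as in (1)(c); since $\im(f\pi)=\im f$, I may replace $f$ by $g:=f\pi\colon U_0\to Y$, i.e.\ assume the source is in $\add U$. Choosing a presentation $0\to V_1\to V_0\xrightarrow{\rho}Y\to 0$ and using $\Ext_\Lambda^1(U_0,V_1)=0$ (rigidity), I lift $g$ to $\tilde g\colon U_0\to V_0$ with $\rho\tilde g=g$. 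The decisive observation is that $T:=\im\tilde g$ is the image of a morphism in $\add U$, so $T\in\Fac U\cap\Sub U=\add U$ by Lemma \ref{ice:lem:image}.

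Now I would set $P:=\rho^{-1}(\im g)=T+V_1\le V_0$. Being a quotient of $T\oplus V_1$, it lies in $\Fac U$, and being a submodule of $V_0\in\add U$, it lies in $\Sub U$; hence $P\in\add U$ by a second application of Lemma \ref{ice:lem:image}. Since $\rho$ restricts to a short exact sequence $0\to V_1\to P\to\im g\to 0$ with $V_1,P\in\add U$, this presents $\im g=\im f$ as a cokernel of a map in $\add U$, so $\im f\in\ccok U$. The one genuinely delicate point — and the step I would be most careful about — is this passage to the sum $P=T+V_1$: trying instead to control the intersection $T\cap V_1$ leads nowhere transparent, whereas for $P$ both the $\Fac U$ and $\Sub U$ memberships are manifest and Lemma \ref{ice:lem:image} applies directly. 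None of this uses representation-finiteness, so the argument covers any rigid module over a hereditary artin algebra.
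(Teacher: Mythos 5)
Your proposal is correct and follows essentially the same route as the paper: reduce the image computation to a map $g\colon U_0\to Y$ with source in $\add U$, use $\Ext^1_\Lambda(U_0,V_1)=0$ to compare with the presentation $0\to V_1\to V_0\to Y\to 0$, and conclude via $\Fac U\cap\Sub U=\add U$; your explicit lift $\tilde g$ and the submodule $P=\rho^{-1}(\im g)=T+V_1$ are just the paper's pullback $Z$ and the splitting of its top row written out by hand. The only other (harmless) deviation is that you verify (c)$\Rightarrow$(a) directly from the long exact sequence instead of citing the mono-cokernel closure of $\XX_U$.
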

\begin{proof}
  (1)
  Clearly (a) implies (b) by $\XX_U \subset \ccok U$. Also the implication (b) $\Rightarrow$ (c) follows immediately from Lemma \ref{ice:lem:image} since $\Lambda$ is hereditary and $U$ is rigid, and Lemma \ref{ice:lem:ar} shows (c) $\Rightarrow$ (a).

  (2)
  By Lemma \ref{ice:lem:ar}, we only have to show that $\XX_U = \ccok U$ is closed under images, because this will immediately imply that $\XX_U$ is closed under cokernels since $\XX_U$ is closed under mono-cokernels.

  Take any $\varphi \colon X \to Y$ with $X,Y \in \XX_U$. Since $X \in \XX_U \subset \Fac U$, we may assume  $X \in \add U$ to show $\im \varphi \in \XX_U$.
  Since $Y$ belongs to $\XX_U$, there is a short exact sequence $0 \to U_1 \to U_0 \to Y \to 0$ with $U_1,U_0\in\add U$ by (1). By taking pullback, we obtain the following exact commutative diagram.
  \[
  \begin{tikzcd}
    0 \rar & U_1 \rar \dar[equal] & E \rar \ar[rd, phantom, "{\rm p.b.}"] \dar[twoheadrightarrow] & X \rar \dar[twoheadrightarrow]& 0 \\
    0 \rar & U_1 \dar[equal] \rar & Z \dar[hookrightarrow]\rar \ar[rd, phantom, "{\rm p.b.}"] & \im \varphi \rar\dar[hookrightarrow] & 0 \\
    0 \rar & U_1 \rar & U_0 \rar & Y \rar & 0
  \end{tikzcd}
  \]
  Now we have $\Ext_\Lambda^1(X,U_1) = 0$ by $X \in \add U$, which implies that the top short exact sequence splits. Thus $E \iso U_1 \oplus X \in \add U$ and $Z \in \Fac U \cap \Sub U$ hold. Therefore we get $Z \in \add U$ by Lemma \ref{ice:lem:image}. Then the middle horizontal short exact sequence implies $\im \varphi \in \XX_U$.

  (3)
  Obvious from the short exact sequence in (1)(a).
\end{proof}

Now we are ready to prove Theorem \ref{ice:thm:main}.
\begin{proof}[Proof of Theorem \ref{ice:thm:main}]
  Proposition \ref{ice:prop:icerigid} gives a map $P(-) \colon \icep \Lambda \to \rigid\Lambda$, and Proposition \ref{ice:prop:rigidice} gives a map $\ccok \colon \rigid \Lambda \to \icep \Lambda$.
  These propositions also show that these maps are mutually inverse to each other.

  Finally we prove statements for the representation-finite case. Let $\Lambda$ be a representation-finite hereditary artin algebra, that is, $\mod\Lambda$ has finitely many indecomposables up to isomorphism.
  Then \cite[Corollary 3.15]{enostr} implies that every subcategory of $\mod\Lambda$ closed under extensions has enough $\Ext$-projectives.
  In particular, we have $\ice\Lambda = \icep\Lambda$. Moreover, if $\CC$ is a CE-closed subcategory of $\mod\Lambda$, then Proposition \ref{ice:prop:icerigid} implies that $\CC = \ccok P(\CC)$ holds. Since $P(\CC)$ is rigid, $\CC$ is automatically closed under images by Proposition \ref{ice:prop:rigidice}.
\end{proof}

\section{Maps to torsion classes and wide subcategories}\label{ice:sec:4}
The class of ICE-closed subcategories contain both the classes of torsion classes and wide subcategories, so it is natural to ask the relation between these three classes. The aim of this section is to introduce two natural maps from the set of ICE-closed subcategories to the set of torsion classes and wide subcategories, and to investigate these maps via rigid modules.

Let us introduce some notation. For an artin algebra $\Lambda$, we denote by $\tors\Lambda$ (resp. $\ftors\Lambda$) the set of torsion classes (resp. functorially finite torsion classes) in $\mod\Lambda$.
Similarly, we denote by $\wide\Lambda$ (resp. $\fwide\Lambda$) the set of wide subcategories (resp. functorially finite wide subcategories) in $\mod\Lambda$.

First, we construct two maps $\TTT\colon\ice\Lambda \defl \tors\Lambda$ and $\WWW \colon \ice\Lambda \defl \wide\Lambda$ which are the identities on $\tors\Lambda$ and $\wide\Lambda$ respectively.
\begin{definition}
  Let $\Lambda$ be an artin algebra and $\CC$ an ICE-subcategory of $\mod\Lambda$.
  \begin{enumerate}
    \item $\TTT(\CC)$ denotes the smallest torsion class containing $\CC$.
    \item $\WWW(\CC)$ is a subcategory of $\CC$ defined as follows:
    \[
    \WWW(\CC) = \{ W \in \CC \, | \, \text{$\ker \varphi \in \CC$ for any map $\varphi \colon C \to W$ with $C \in \CC$} \}
    \]
  \end{enumerate}
\end{definition}
Clearly $\TTT(\TT) = \TT$ for $\TT \in \tors\Lambda$ and $\WWW(\WW) = \WW$ for $\WW \in \wide\Lambda$. It is non-trivial that $\WWW$ actually defines a map $\WWW \colon \ice\Lambda \to \wide\Lambda$, as we shall see below.
\begin{proposition}
  Let $\Lambda$ be an artin algebra and $\CC$ an ICE-closed subcategory of $\mod\Lambda$. Then $\WWW(\CC)$ is a wide subcategory of $\mod\Lambda$.
\end{proposition}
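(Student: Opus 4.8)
The plan is to verify directly that $\WW := \WWW(\CC)$ satisfies the three defining closure conditions of a wide subcategory: closure under kernels, cokernels, and extensions. Throughout I would exploit two features: that $\WW \subseteq \CC$, and that membership $W \in \WW$ is a statement about \emph{all} maps $C \to W$ with $C \in \CC$ having kernel in $\CC$. Since each condition is a local verification, I would treat them separately, in increasing order of difficulty: kernels, then extensions, then cokernels. The recurring strategy is, given a test map $\psi$ into the object I want to place in $\WW$, to transform $\psi$ into a map whose target is already known to lie in $\WW$, and to arrange that $\ker\psi$ is preserved (or controlled) under this transformation.

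For kernels, given $\varphi\colon W_1 \to W_2$ with $W_1,W_2 \in \WW$, applying the definition of $\WW$ to the target $W_2$ and the map $\varphi$ (whose source $W_1$ lies in $\CC$) immediately yields $\ker\varphi \in \CC$. To upgrade this to $\ker\varphi \in \WW$, I would take an arbitrary test map $\psi\colon C \to \ker\varphi$ with $C \in \CC$ and compose with the inclusion $\iota\colon \ker\varphi \hookrightarrow W_1$. Since $\iota$ is monic, $\ker(\iota\psi) = \ker\psi$, and $\ker(\iota\psi) \in \CC$ because $W_1 \in \WW$.

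For extensions, consider a short exact sequence $0 \to W_1 \xrightarrow{f} M \xrightarrow{g} W_2 \to 0$ with $W_1,W_2 \in \WW$. First $M \in \CC$ by extension-closure of $\CC$. Given a test map $\psi\colon C \to M$ with $C \in \CC$, I would pass to the composite $g\psi\colon C \to W_2$, whose kernel $K$ lies in $\CC$ since $W_2 \in \WW$. Restricting $\psi$ to $K$ lands in $\ker g \cong W_1$ (as $g\psi|_K = 0$), producing a map $K \to W_1$; using that $\ker\psi \subseteq K$ and that $f$ is monic, its kernel is exactly $\ker\psi$. Since $K \in \CC$ and $W_1 \in \WW$, this gives $\ker\psi \in \CC$.

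The cokernel case is the crux, and is where I expect the main obstacle. Given $\varphi\colon W_1 \to W_2$ with $W_1,W_2 \in \WW$, the object $Q := \coker\varphi$ lies in $\CC$ by cokernel-closure. For a test map $\psi\colon C \to Q$ with $C \in \CC$, the difficulty is that $\psi$ maps into a \emph{quotient}, so I cannot directly restrict it into $W_2$, where the defining property of $\WW$ is available. The device is to pull $\psi$ back along the projection $\pi\colon W_2 \twoheadrightarrow Q$, forming $E = C \times_Q W_2$. The pullback sits in a short exact sequence $0 \to \im\varphi \to E \to C \to 0$, since $\ker(E \to C) \cong \ker\pi = \im\varphi$; hence $E \in \CC$ by image- and extension-closure of $\CC$, which is precisely the point at which all three ICE-conditions are used simultaneously. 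Finally the pullback identifies $\ker(E \to W_2) \cong \ker\psi$, and since $E \in \CC$ and $W_2 \in \WW$, I conclude $\ker\psi \in \CC$, so $Q \in \WW$.
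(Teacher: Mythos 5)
Your proof is correct, and it takes a genuinely different route from the paper. The paper does not verify the three closure conditions by hand: it observes that, because $\CC$ is closed under cokernels, $\WWW(\CC)$ coincides with the category of \emph{$\CC$-coherent} objects (those $X \in \Fac\CC$ such that every map $C \to X$ with $C \in \CC$ has kernel in $\CC$), and then invokes the general result of Kashiwara--Schapira \cite[Exercise 8.23]{KS}, which says that for an extension-closed $\CC$ the coherent objects form a wide subcategory. Your argument replaces this citation with a direct, self-contained verification, and each step checks out: for kernels, composing a test map with the monomorphism $\ker\varphi \hookrightarrow W_1$ preserves the kernel; for extensions, restricting a test map $\psi\colon C \to M$ to $K = \ker(g\psi) \in \CC$ gives a map $K \to W_1$ whose kernel is exactly $\ker\psi$ (using $\ker\psi \subseteq K$ and that $f$ is monic); and for cokernels, the pullback $E = C \times_Q W_2$ fits in $0 \to \im\varphi \to E \to C \to 0$, so $E \in \CC$, while $\ker(E \to W_2) \cong \ker\psi$. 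What your approach buys is transparency: it isolates the cokernel case as the only place where image-closedness of $\CC$ enters (via $\im\varphi = \ker\pi \in \CC$), whereas the paper's proof hides this inside the coherence formalism; the cost is length, and the small omitted routine check that $\WWW(\CC)$ is closed under direct sums and summands (immediate from extension-closure and from composing with split monomorphisms), which the paper's convention on subcategories requires but which neither proof dwells on.
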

\begin{proof}
  This a special case of \cite[Theorem 4.5]{enomono}, but here we will give a proof using the general result in \cite[Exercise 8.23]{KS}.
  According to it, we say that an object $X \in \mod\Lambda$ is \emph{$\CC$-coherent} if $X \in \Fac\CC$ and $\ker \varphi \in \CC$ for every map $C \to X$ with $C\in\CC$.
  Since $\CC$ is closed under cokernels, it is easy to check that every $\CC$-coherent object belongs to $\CC$, namely, $\WWW(\CC)$ coincides with the category of $\CC$-coherent objects.
  Then since $\CC$ is extension-closed, \cite[Exercise 8.23]{KS} implies that $\WWW(\CC)$ is a wide subcategory of $\mod\Lambda$.
\end{proof}

If $\Lambda$ is hereditary, then $\TTT$ is equal to $\Fac$, as the following general proposition shows.
\begin{proposition}[{c.f. \cite[Proposition 2.13]{IT}}]
  Let $\Lambda$ be a hereditary artin algebra and $\CC$ an extension-closed subcategory of $\mod\Lambda$. Then $\Fac \CC = \TTT(\CC)$ holds, namely, $\Fac \CC$ is a torsion class.
\end{proposition}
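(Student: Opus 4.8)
The plan is to establish the two inclusions $\Fac\CC \subseteq \TTT(\CC)$ and $\TTT(\CC) \subseteq \Fac\CC$ separately. The first is essentially free: $\TTT(\CC)$ is by definition a torsion class containing $\CC$, so it is closed under quotients and therefore contains every quotient of an object of $\add\CC = \CC$, which is exactly $\Fac\CC$. For the reverse inclusion, since $\TTT(\CC)$ is the \emph{smallest} torsion class containing $\CC$, it suffices to verify that $\Fac\CC$ is itself a torsion class; as $\CC \subseteq \Fac\CC$ holds trivially, the equality $\Fac\CC = \TTT(\CC)$ will then follow.

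So the real content is to check that $\Fac\CC$ is closed under quotients and extensions. Closure under quotients is immediate, since a quotient of a quotient of some $C \in \CC$ is again such a quotient. The heart of the argument is closure under extensions, and this is where heredity enters. Given a short exact sequence $0 \to L \to M \to N \to 0$ with $L, N \in \Fac\CC$, I would first choose surjections $\pi \colon C_L \defl L$ and $C_N \defl N$ with $C_L, C_N \in \CC$. Pulling back $M \defl N$ along $C_N \defl N$ produces a short exact sequence $0 \to L \to E \to C_N \to 0$ together with a surjection $E \defl M$; since $\Fac\CC$ is closed under quotients, it is then enough to prove that $E \in \Fac\CC$.

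The key step is to lift this extension of $C_N$ by $L$ to an extension of $C_N$ by $C_L$. The surjection $\pi$ induces $\pi_* \colon \Ext_\Lambda^1(C_N, C_L) \to \Ext_\Lambda^1(C_N, L)$, and I claim it is surjective: writing $0 \to K \to C_L \to L \to 0$ for the kernel of $\pi$, the long exact sequence exhibits the cokernel of $\pi_*$ inside $\Ext_\Lambda^2(C_N, K)$, which vanishes because $\Lambda$ is hereditary. Hence the class of $0 \to L \to E \to C_N \to 0$ lifts to a class represented by $0 \to C_L \to E' \to C_N \to 0$ whose pushout along $\pi$ recovers $E$. Since $C_L$ and $C_N$ lie in the extension-closed subcategory $\CC$, we get $E' \in \CC$, and the pushout yields a surjection $E' \defl E$. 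Therefore $E \in \Fac\CC$, and consequently $M \in \Fac\CC$.

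I expect the surjectivity of $\pi_*$ to be the crux, and it is exactly the point at which the hereditary hypothesis is indispensable: the obstruction to lifting the extension lives in $\Ext_\Lambda^2$, so $\gl\Lambda \le 1$ is precisely what makes the argument go through. The remaining bookkeeping --- that pulling back a surjection yields a surjection, and that the pushout map $E' \to E$ is again surjective because $\pi$ is --- is routine diagram chasing that I would carry out but not dwell on.
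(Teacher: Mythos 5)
Your proof is correct and follows essentially the same route as the paper: reduce to closure of $\Fac\CC$ under extensions, use heredity to see that the map on $\Ext^1$ induced by $\pi_L \colon C_L \defl L$ is surjective (the obstruction lying in a vanishing $\Ext^2$), and combine a lift of the extension with a pullback along $\pi_N$ to produce an object of $\CC$ surjecting onto $M$. The only cosmetic difference is the order of the two steps (you pull back along $\pi_N$ first and then lift along $\pi_L$, while the paper lifts first and then pulls back), which changes nothing of substance.
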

\begin{proof}
  We only have to show that $\Fac \CC$ is closed under extensions. Take a short exact sequence
  \[
  \begin{tikzcd}
    0 \rar & L \rar & M \rar & N \rar & 0
  \end{tikzcd}
  \]
  with $L,N \in \Fac \CC$, and take surjections $\pi_L \colon C_L \defl L$ and $\pi_N \colon C_N \defl N$. Since $\Lambda$ is hereditary, the induced map $\Ext_\Lambda^1(N,C_L) \to \Ext_\Lambda^1(N,L)$ is a surjection. Thus we have the following exact commutative diagram, where we in addition take pullback along $\pi_N$.
  \[
  \begin{tikzcd}
    0 \rar & C_L \dar[equal]\rar & F\dar[twoheadrightarrow] \ar[rd, phantom, "{\rm p.b.}"] \rar & C_N\dar[twoheadrightarrow, "\pi_N"]\rar & 0 \\
    0 \rar & C_L \ar[rd, phantom, "{\rm p.o.}"] \dar["\pi_L"', twoheadrightarrow]\rar & E\dar[twoheadrightarrow] \rar & N \dar[equal]\rar & 0 \\
    0 \rar & L \rar & M \rar & N \rar & 0
  \end{tikzcd}
  \]
  Since $\CC$ is closed under extensions, we have $F \in \CC$. Thus we obtain $M \in \Fac \CC$.
\end{proof}

Next we will consider counterparts of the maps $\Fac$ and $\WWW$ in terms of rigid modules.
Let $\Lambda$ be a hereditary artin algebra. We denote by $\stilt \Lambda$ the set of isomorphism classes of basic support tilting $\Lambda$-modules. If $U$ a rigid $\Lambda$-module, then $\Fac U$ is closed under extension by \cite[Proposition 5.5, Corollary 5.9]{AS}, thus it is a torsion class. By the result of \cite{IT} or \cite{AIR}, there is a unique basic support tilting module $\ov{U}$ satisfying $\Fac \ov{U} = \Fac U$. We call $\ov{U}$ the \emph{co-Bongartz completion} of $U$.

Now the following proposition can immediately follows from definition, so we omit the proof.
\begin{proposition}\label{ice:prop:tiltcompati}
  Let $\Lambda$ be a hereditary artin algebra. Then the following diagram commutes, and the dashed maps are given by taking the co-Bongartz completion.
  \[
  \begin{tikzcd}
    \stilt \Lambda  \rar[shift left = .3ex, "\Fac"] \ar[dd, bend right=75, "1"'] \dar[hookrightarrow]
    & \ftors\Lambda \lar[shift left = .3ex, "P"] \ar[dd, bend left=75, "1"] \dar[hookrightarrow] \\
    \rigid \Lambda \rar[shift left = .3ex, "\ccok"] \dar[dashed, "(\ov{-})"'] \ar[rd, "\Fac", sloped]
    & \icep\Lambda  \dar["\Fac", twoheadrightarrow] \lar[shift left = .3ex, "P"] \\
    \stilt \Lambda  \rar[shift left = .3ex, "\Fac"] & \ftors\Lambda \lar[shift left = .3ex, "P"]
  \end{tikzcd}
  \]
\end{proposition}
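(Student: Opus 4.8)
The plan is to verify the diagram cell by cell, observing that once the three horizontal bijections are in place (the top and bottom rows from \cite{IT} and \cite{AIR}, and the middle row from Theorem \ref{ice:thm:main}), every remaining square or triangle collapses to a one-line identity. First I would record the two vertical inclusions: a basic support tilting module is rigid, giving $\stilt\Lambda\hookrightarrow\rigid\Lambda$, while a functorially finite torsion class is in particular a torsion class, hence ICE-closed, and has enough $\Ext$-projectives, giving $\ftors\Lambda\hookrightarrow\icep\Lambda$. The two bends labelled $1$ then amount to the following. For support tilting $T$ we must have $\ov T=T$: since $T$ is already support tilting and satisfies $\Fac\ov T=\Fac T$, uniqueness of the co-Bongartz completion forces $\ov T=T$. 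For a torsion class $\TT$ we must have $\Fac\TT=\TT$, which holds because $\TT$ is closed under quotients.

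Next I would match the two copies of $P$ across the right-hand inclusion. The point is that $\Ext$-projectivity is computed in the ambient category $\mod\Lambda$, so for a functorially finite torsion class $\TT$ the $\Ext$-progenerator $P(\TT)$ is the same object whether $\TT$ is regarded as an element of $\icep\Lambda$ or of $\ftors\Lambda$; by the classical correspondence this object is precisely the support tilting module of $\TT$, so $P$ restricted to $\ftors\Lambda$ indeed lands in $\stilt\Lambda$ and the square built from $P$ and the two inclusions commutes. For the top square I would prove the identity $\ccok T=\Fac T$ for support tilting $T$. Here $\Fac T$ is a functorially finite torsion class whose $\Ext$-progenerator is $T$, so Proposition \ref{ice:prop:icerigid}(3) yields $\Fac T=\ccok P(\Fac T)=\ccok T$, which is exactly the claim that $\stilt\Lambda\hookrightarrow\rigid\Lambda\xrightarrow{\ccok}\icep\Lambda$ agrees with $\stilt\Lambda\xrightarrow{\Fac}\ftors\Lambda\hookrightarrow\icep\Lambda$.

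Finally I would handle the diagonal $\Fac\colon\rigid\Lambda\to\ftors\Lambda$ together with the dashed co-Bongartz map. For any rigid $U$ we have $U\in\add U\subseteq\ccok U\subseteq\Fac U$ (the middle containment by Proposition \ref{ice:prop:rigidice}(3), the last since cokernels of maps in $\add U$ are quotients of objects of $\add U$), and hence $\Fac(\ccok U)=\Fac U$; this identifies the diagonal with the composite $\rigid\Lambda\xrightarrow{\ccok}\icep\Lambda\xrightarrow{\Fac}\ftors\Lambda$, making the upper triangle commute. On the other side, $\Fac\ov U=\Fac U$ holds by the very definition of the co-Bongartz completion, so the diagonal also equals $\rigid\Lambda\xrightarrow{(\ov{-})}\stilt\Lambda\xrightarrow{\Fac}\ftors\Lambda$, making the lower triangle commute; applying the bottom bijection $P$ then gives the consistency $(\ov{-})=P\circ\Fac$ for free.

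I do not expect a genuine mathematical obstacle here: each piece is either definitional or an immediate application of the propositions already proved. The one point requiring care is the matching of the two $P$ maps across the inclusion $\ftors\Lambda\hookrightarrow\icep\Lambda$, i.e. checking that the $\Ext$-progenerator of a functorially finite torsion class is literally its support tilting module and not merely an abstractly isomorphic object. This is the content of the \cite{IT}/\cite{AIR} correspondence combined with the observation that $\Ext$-projectivity is an ambient notion, so the main difficulty is bookkeeping rather than mathematics — which is why the proof is omitted in the text.
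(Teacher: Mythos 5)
Your proof is correct and coincides with what the paper intends: the paper omits the proof precisely because it reduces to the cell-by-cell definitional checks you carry out (the inclusions, $\ccok T=\Fac T$ for support tilting $T$ via the $\Ext$-progenerator, $\Fac(\ccok U)=\Fac U$ from $\add U\subseteq\ccok U\subseteq\Fac U$, and $\Fac\ov{U}=\Fac U$ by definition of the co-Bongartz completion). No genuinely different route is taken, and your identification of the matching of the two $P$ maps as the only point needing care is accurate.
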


Next we will investigate the map $\WWW \colon \icep\Lambda \to \wide\Lambda$, which is more non-trivial than $\TTT$. To do this, we introduce the \emph{$\Fac$-minimality}, \emph{covers} and \emph{split projectives}, following \cite{AS}.
\begin{definition}
  Let $\PP$ be a subcategory of $\mod\Lambda$. We say that $\PP$ is \emph{$\Fac$-minimal} if there is no proper subcategory $\PP'$ of $\PP$ satisfying $\PP \subset  \Fac \PP'$. We say that $U \in \mod\Lambda$ is \emph{$\Fac$-minimal} if $\add U$ is $\Fac$-minimal.
\end{definition}
Note that \emph{subcategories} are required to be closed under direct sums and direct summands.
If $U \in \mod\Lambda$ is basic and $U = \bigoplus_{i \in I} U_i$ with each $U_i$ indecomposable, then $U$ is $\Fac$-minimal if and only if there is no proper subset $J$ of $I$ satisfying $U \in \Fac (\bigoplus_{j \in J} U_j)$. $\Fac$-minimal basic modules are called \emph{covering-indecomposable modules} in \cite{AS}.
\begin{definition}
  Let $\CC$ be a subcategory of $\mod\Lambda$. Then an object $P$ in $\CC$ is \emph{split projective} if every surjection $C \defl P$ in $\mod\Lambda$ with $C \in \CC$ splits. We denote by $\PP_0(\CC)$ the subcategory of $\CC$ consisting of all split projective objects in $\CC$.
\end{definition}
It can be shown $\PP_0(\CC)$ is closed under direct sums and direct summands.
Clearly we have the inclusion $\PP_0(\CC) \subset \PP(\CC)$ for an extension-closed subcategory $\CC$ of $\mod\Lambda$.

Next we recall the notion of \emph{covers} of a category, introduced in \cite{AS}.
\begin{definition}
  Let $\CC$ be a subcategory of $\mod\Lambda$ and $\PP$ a subcategory of $\CC$.
  \begin{enumerate}
    \item $\PP$ is a \emph{cover of $\CC$} if $\CC \subset \Fac\PP$ holds.
    \item $\PP$ is a \emph{minimal cover of $\CC$} if $\PP$ is a cover of $\CC$ and there is no proper subcategory $\PP'$ of $\PP$ which is a cover of $\CC$.
    \item An object $P$ in $\CC$ is a (minimal) cover of $\CC$ if so is $\add P$.
    \item $\CC$ has a \emph{finite (minimal) cover} if there is an object $P$ in $\CC$ which is a (minimal) cover of $\CC$.
    \end{enumerate}
\end{definition}

We will use some results in \cite{AS} summarized as follows.
\begin{proposition}[{\cite[Theorem 2.3, Corollary 2.4]{AS}}]\label{ice:prop:facmin}
  Let $\Lambda$ be an artin algebra and $\CC$ a subcategory of $\mod\Lambda$. Then the following hold.
  \begin{enumerate}
    \item Let $\PP$ be a cover of $\CC$. Then $\PP$ is a minimal cover of $\CC$ if and only if $\PP$ is $\Fac$-minimal if and only if $\PP = \PP_0(\CC)$. In particular, the minimal cover of $\CC$ is unique if it exists.
    \item If $\CC$ has a finite cover, then $\CC$ has a finite minimal cover. Thus there is a unique basic $\Fac$-minimal cover $P$ of $\CC$ up to isomorphism, which satisfies $\PP_0(\CC) = \add P$.
  \end{enumerate}
\end{proposition}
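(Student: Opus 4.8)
The plan is to deduce everything from the single intrinsic identity $\PP=\PP_0(\CC)$ for a $\Fac$-minimal cover, since $\PP_0(\CC)$ does not depend on the chosen cover. First I would dispose of the formal part of (1). The equivalence of ``minimal cover'' and ``$\Fac$-minimal'' is immediate from the idempotency $\Fac\Fac\PP'=\Fac\PP'$: if $\PP'\subsetneq\PP$ witnesses failure of $\Fac$-minimality then $\CC\subseteq\Fac\PP\subseteq\Fac\PP'$ makes $\PP'$ a strictly smaller cover, and conversely a strictly smaller cover $\PP'$ gives $\PP\subseteq\CC\subseteq\Fac\PP'$, contradicting $\Fac$-minimality. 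The implication $\PP=\PP_0(\CC)\Rightarrow\Fac$-minimal and the inclusion $\PP_0(\CC)\subseteq\PP$ (valid for any cover) are both one-line consequences of the definition of split projective: a surjection onto a split projective from $\add\PP$, resp. from $\add\PP'$ for a putative smaller cover, splits, forcing the target to lie in $\add\PP$, resp. $\add\PP'$.

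The real content, and the step I expect to be the main obstacle, is the remaining inclusion $\PP\subseteq\PP_0(\CC)$ for a $\Fac$-minimal cover $\PP$: every indecomposable $P\in\PP$ must be split projective. Here I would argue via right-minimal approximations. Given any surjection $f\colon C\to P$ with $C\in\CC$, choose a surjection $Q\to C$ with $Q\in\add\PP$ (possible since $C\in\Fac\PP$) and pass to the right-minimal version of the composite $Q\to P$, obtaining a right-minimal surjection $\rho\colon Q'\to P$ with $Q'\in\add\PP$ whose splitting would force $f$ to split. The key observation is that a right-minimal epimorphism has superfluous kernel, so $\ker\rho\subseteq\rad Q'$ and hence the induced map on tops $\overline{\rho}\colon\top Q'\to\top P$ is an isomorphism. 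Writing $Q'=P^{a}\oplus Q''$ with $Q''$ having no summand isomorphic to $P$, comparing composition lengths in $(\top P)^{a}\oplus\top Q''\iso\top P$ forces $a\le 1$, and $a=1$ forces $Q''=0$. If $a=0$ then $\rho$ is a surjection onto $P$ from $\add\PP''$, where $\PP''$ is generated by the indecomposables of $\PP$ other than $P$; this exhibits $P\in\Fac\PP''$ and contradicts $\Fac$-minimality. Hence $a=1$ and $Q''=0$, so $\rho\colon P\to P$ is a right-minimal epimorphism of a finite-length module, thus an isomorphism; therefore $f$ splits and $P$ is split projective. The delicate point is precisely this length/top bookkeeping: it replaces the naive (and false) hope that the ``non-$P$ part'' of $\rho$ is already surjective, which can fail when $\top P$ is not simple.

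Finally, uniqueness in (1) is automatic once the three conditions are equivalent, because the characterization $\PP=\PP_0(\CC)$ is intrinsic to $\CC$. For (2) I would start from a finite cover $P\in\CC$ and peel: as long as $\add P$ is not a minimal cover of $\CC$, some indecomposable summand can be deleted while keeping a cover, and since $P$ has only finitely many indecomposable summands this terminates at a finite minimal cover $P_{\min}$. By (1) applied to $\add P_{\min}$ we get $\add P_{\min}=\PP_0(\CC)$, which simultaneously yields existence of a finite minimal cover, uniqueness of the basic $\Fac$-minimal cover, and the equality $\PP_0(\CC)=\add P_{\min}$.
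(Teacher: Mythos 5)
Your formal steps are fine: the equivalence of ``minimal cover'' and ``$\Fac$-minimal'' via $\Fac\Fac\PP'=\Fac\PP'$, the inclusion $\PP_0(\CC)\subseteq\PP$ for any cover, and the peeling argument deducing (2) from (1) all work. (For what it is worth, the paper offers no proof of this proposition at all --- it is quoted from \cite{AS} --- so the only question is whether your argument stands on its own.) It does not: the ``key observation'' that a right-minimal epimorphism has superfluous kernel is false. That statement is the standard characterization of projective covers, and its proof genuinely needs the source to be \emph{projective} (one lifts the surjection off a proper submodule back through itself to produce a non-invertible $\phi$ with $\rho\phi=\rho$). For a non-projective source it fails. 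Concretely, let $\Lambda=kA_3$ with orientation $1\to2\to3$, let $M=P_1$ (uniserial with top $S_1$ and socle $S_3$), let $P=M/S_3$, and let $\rho=(q,g)\colon M\oplus S_2\defl P$ with $q$ the canonical quotient and $g\colon S_2\to P$ a nonzero map onto $\soc P=S_2$. Since $\Hom_\Lambda(M,S_2)=0=\Hom_\Lambda(S_2,M)$, we get $\End_\Lambda(M\oplus S_2)\iso k\times k$, and $\rho\phi=\rho$ forces $\phi=\mathrm{id}$, so $\rho$ is right minimal; yet $\ker\rho\not\subseteq\rad(M\oplus S_2)$ and the induced map $\top(M\oplus S_2)=S_1\oplus S_2\to\top P=S_1$ is not an isomorphism. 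So the length bookkeeping on tops, and with it the conclusion $a=1$, $Q''=0$, $Q'\iso P$, does not follow.

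The inclusion $\PP\subseteq\PP_0(\CC)$ for a $\Fac$-minimal cover is still true, but the working mechanism is the radical of the \emph{endomorphism ring} of $P$, not the radical of the covering module. Write your surjection as $g=(g_1,g_2)\colon P^a\oplus Q''\defl P$ with $Q''$ having no summand isomorphic to $P$. If some component $\phi_i\colon P\to P$ of $g_1$ is an isomorphism, then $g$ splits and hence so does the original $f$. Otherwise all $\phi_i$ lie in $\rad\End_\Lambda(P)$, which is nilpotent since $P$ is indecomposable of finite length; substituting the identity $P=\sum_i\phi_i(P)+\im g_2$ into itself repeatedly and using nilpotence yields $P=\sum_w \im(w\circ g_2)$ over finitely many words $w$ in the $\phi_i$, hence $P\in\Fac Q''$. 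Since $Q''$ lies in the proper subcategory $\PP'$ of $\PP$ generated by the indecomposables other than $P$, this gives $\PP\subseteq\Fac\PP'$ and contradicts $\Fac$-minimality. Note that the contradiction is reached through $\Fac$-minimality rather than through a computation of tops; your closing remark correctly flags that the ``non-$P$ part'' of the map need not be surjective, and the nilpotence argument is precisely what replaces that naive hope.
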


By using this, we can define the following operation which yields a $\Fac$-minimal basic module.
\begin{definition}
  Let $\Lambda$ be an artin algebra and $M \in \mod\Lambda$. Then the \emph{$\Fac$-minimal version of $M$} is a basic $\Fac$-minimal cover $M_0$ of $\add M$, which is unique up to isomorphism by Proposition \ref{ice:prop:facmin}.
\end{definition}
If $M$ is a cover of a subcategory $\CC$ of $\mod\Lambda$, then Proposition \ref{ice:prop:facmin} implies that its $\Fac$-minimal version $M_0$ satisfies $\add M_0 = \PP_0(\CC)$.

Now we will use the following general observation when $\Ext$-projectives coincides with split projectives, which is of interest in its own.
\begin{proposition}\label{ice:prop:epikerfacmin}
  Let $\Lambda$ be an artin algebra and $\CC$ an extension-closed subcategory of $\mod\Lambda$ with enough $\Ext$-projectives. Then the following are equivalent:
  \begin{enumerate}
    \item $\PP_0(\CC) = \PP(\CC)$ holds, that is, every $\Ext$-projective object in $\CC$ is split projective in $\CC$.
    \item $\PP(\CC)$ is $\Fac$-minimal.
    \item $\CC$ is closed under epi-kernels, that is, for every short exact sequence
    \[
    \begin{tikzcd}
      0 \rar & L \rar & M \rar & N \rar & 0,
    \end{tikzcd}
    \]
    if $M$ and $N$ belong to $\CC$, then so does $L$.
  \end{enumerate}
\end{proposition}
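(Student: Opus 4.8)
The plan is to deduce the equivalence of (1) and (2) directly from Proposition \ref{ice:prop:facmin}, and to prove (1) $\Leftrightarrow$ (3) by hand, where the implication (1) $\Rightarrow$ (3) is the only step requiring a genuine construction.

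First I would record the basic observation that $\PP(\CC)$ is a cover of $\CC$: since $\CC$ has enough $\Ext$-projectives, every $X \in \CC$ fits into a short exact sequence $0 \to Y \to P \to X \to 0$ with $P \in \PP(\CC)$ and $Y \in \CC$, so $X \in \Fac \PP(\CC)$, whence $\CC \subset \Fac \PP(\CC)$. With this in hand, the equivalence (1) $\Leftrightarrow$ (2) is immediate from Proposition \ref{ice:prop:facmin}(1) applied to the cover $\PP(\CC)$: it asserts that $\PP(\CC)$ is $\Fac$-minimal if and only if $\PP(\CC) = \PP_0(\CC)$, which is exactly the statement that every $\Ext$-projective object is split projective.

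For (3) $\Rightarrow$ (1), I would take $P \in \PP(\CC)$ and an arbitrary surjection $\pi \colon C \defl P$ with $C \in \CC$, and set $K = \ker \pi$. Since $C, P \in \CC$, condition (3) gives $K \in \CC$; then $\Ext_\Lambda^1(P, K) = 0$ by $\Ext$-projectivity of $P$, so $\pi$ splits and $P \in \PP_0(\CC)$. As the reverse inclusion $\PP_0(\CC) \subset \PP(\CC)$ always holds, this yields (1).

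The main step is (1) $\Rightarrow$ (3), and here the key device is a pullback. Given a short exact sequence $0 \to L \to M \xrightarrow{f} N \to 0$ with $M, N \in \CC$, I would use enough $\Ext$-projectives to choose $0 \to Y \to P \xrightarrow{\pi} N \to 0$ with $P \in \PP(\CC)$ and $Y \in \CC$, and then form the pullback $E = P \times_N M$. This produces two short exact sequences: $0 \to Y \to E \to M \to 0$ (from the pullback of $\pi$ along $f$, whose kernel is $\ker\pi = Y$) and $0 \to L \to E \to P \to 0$ (from the pullback of $f$ along $\pi$, whose kernel is $\ker f = L$). The first sequence together with extension-closedness of $\CC$ gives $E \in \CC$. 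The second sequence is then a surjection $E \defl P$ with $E \in \CC$, and since (1) says $P$ is split projective, it splits; hence $L$ is a direct summand of $E \in \CC$, so $L \in \CC$ because subcategories are closed under direct summands. The only delicate point is checking that the pullback square yields precisely these two sequences with the asserted kernels, which is a routine diagram chase; I do not expect any genuine obstacle beyond this bookkeeping.
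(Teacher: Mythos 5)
Your proposal is correct and follows essentially the same route as the paper's proof: the equivalence of (1) and (2) via Proposition \ref{ice:prop:facmin} applied to the cover $\PP(\CC)$, the direct splitting argument for (3) $\Rightarrow$ (1), and the same pullback construction for (1) $\Rightarrow$ (3), where extension-closedness gives $E \in \CC$ and split projectivity of $P$ splits off $L$ as a direct summand. The pullback bookkeeping you defer is exactly as you describe and presents no obstacle.
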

\begin{proof}
  (1) $\Leftrightarrow$ (2): Immediate from Proposition \ref{ice:prop:facmin}, since $\PP(\CC)$ is a cover of $\CC$ by the enough $\Ext$-projectivity of $\CC$.

  (1) $\Rightarrow$ (3):
  Suppose that we have a short exact sequence $0 \to L \to M \to N \to 0$ with $M,N \in \CC$.
  Since $\CC$ has enough $\Ext$-projectives, there exists a short exact sequence $0 \to N' \to P \to N \to 0$ with $P \in \PP(\CC)$ and $N' \in \CC$. Then by taking the pullback, we obtain the following exact commutative diagram.
  \[
  \begin{tikzcd}
    & & 0 \dar & 0 \dar \\
    & & N' \rar[equal]\dar & N' \dar \\
    0 \rar & L \rar\dar[equal] & E \rar["\varphi"] \dar & P \dar \rar & 0 \\
    0 \rar & L \rar & M \dar \rar & N \dar \rar & 0 \\
    & & 0 & 0
  \end{tikzcd}
  \]
  Since $\CC$ is extension-closed, the middle vertical exact sequence implies $E \in \CC$. Since $P \in \PP(\CC)$, it is split projective by (1). Thus $\varphi$ splits, hence $L$ is a direct summand of $E$. This implies $L \in \CC$ since a subcategory $\CC$ is assumed to be closed under direct summands.

  (3) $\Rightarrow$ (1):
  Let $P$ be an $\Ext$-projective object in $\CC$, and take any surjection $\pi \colon C \defl P$ with $C \in \CC$. Then we have a short exact sequence $0 \to \ker \pi \to C \to P \to 0$, thus $\ker\pi \in \CC$ holds by (3). Therefore, since $P$ is $\Ext$-projective, this short exact sequence must split. This shows that $P$ is split projective.
\end{proof}

Now next we consider functorially finiteness of wide subcategories, which is of interest in its own. In particular, we will show that the functorially finiteness is equivalent to the exicestence of finite (co)cover, and to the contravariantly (covariantly) finiteness.

The following describes the relation between finite covers and covariantly finiteness.
\begin{lemma}[{\cite[Theorem 4.5, Proposition 3.7]{AS}}]\label{ice:lem:covfincover}
  Let $\Lambda$ be an artin algebra and $\CC$ a subcategory of $\mod\Lambda$ closed under images. Then the following are equivalent.
  \begin{enumerate}
    \item $\CC$ is covariantly finite.
    \item $\CC$ has a finite cover.
  \end{enumerate}
  In this case, let $\Lambda \to C^\Lambda$ be a left minimal $\CC$-approximation. Then $\PP_0(\CC) = \add C^\Lambda$ holds.
\end{lemma}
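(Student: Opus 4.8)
The plan is to prove the two implications separately and then deduce the final assertion, throughout exploiting that $\add P$ has finite type and is therefore functorially finite.

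For $(1)\Rightarrow(2)$ I would show that \emph{any} left $\CC$-approximation of the regular module is automatically a finite cover, so that closure under images is not even needed for this direction. Concretely, let $f\colon\Lambda\to C^\Lambda$ be a left $\CC$-approximation (which exists by covariant finiteness), and let $X\in\CC$ be arbitrary. Since $X$ is finitely generated there is a surjection $\Lambda^n\defl X$; as a finite direct sum of copies of $f$ is again a left $\CC$-approximation of $\Lambda^n$, this surjection factors as $\Lambda^n\to(C^\Lambda)^n\to X$, which forces $(C^\Lambda)^n\to X$ to be surjective. Hence $X\in\Fac C^\Lambda$, and so $C^\Lambda$ is a finite cover of $\CC$.

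For $(2)\Rightarrow(1)$ the hypothesis that $\CC$ is closed under images becomes essential, and this is the step I expect to be the main obstacle. I would begin from a left $\add P$-approximation $f\colon M\to P_0$, which exists because $\add P$ has finite type. The naive guess that $f$ is already a left $\CC$-approximation fails in general: $\CC$ may contain proper quotients of $P$ (for instance the top of $P$), and a map $M\to C$ whose image lands in such a quotient need not lift to $P_0$. The universal target must therefore be assembled from $P_0$ together with the quotients of objects of $\add P$ that lie in $\CC$, and the role of image-closure is precisely to guarantee that the object built in this way stays inside $\CC$; one then verifies the factorization property against an arbitrary $C\in\CC$ by pulling back along a covering surjection $P^m\defl C$. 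Controlling this construction --- simultaneously ensuring that the universal map has target in $\CC$ and that it absorbs \emph{all} maps into $\CC$, not merely those into $\add P$ --- is the crux, and is where I would lean on the Auslander--Smal\o{} analysis of $\Fac$.

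Finally, for the closing assertion I would let $f\colon\Lambda\to C^\Lambda$ be a left \emph{minimal} $\CC$-approximation. By the argument of $(1)\Rightarrow(2)$, $\add C^\Lambda$ is a cover, and since every split projective object of $\CC$ lies in $\CC\subset\Fac C^\Lambda$ and its defining surjection from a power of $C^\Lambda$ splits, we get $\PP_0(\CC)\subset\add C^\Lambda$. For the reverse inclusion I would show that each indecomposable summand $Q$ of $C^\Lambda$ is split projective: given a surjection $\pi\colon C\defl Q$ with $C\in\CC$, lift the composite $\Lambda\xrightarrow{f}C^\Lambda\xrightarrow{p_Q}Q$ along $\pi$ using projectivity of $\Lambda$ to obtain $b\colon\Lambda\to C$, factor $b=c\circ f$ through the approximation, and assemble from the components of $c$ and from $\pi$ a block upper-triangular endomorphism $\psi$ of $C^\Lambda=Q\oplus R$ with diagonal blocks $\pi c_Q$ and $\id_R$ satisfying $\psi f=f$. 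Left minimality then forces $\psi$ to be an automorphism, so its $Q$-corner $\pi c_Q$ is invertible and $c_Q(\pi c_Q)^{-1}$ is a section of $\pi$; thus $\pi$ splits and $\add C^\Lambda\subset\PP_0(\CC)$, giving $\PP_0(\CC)=\add C^\Lambda$.
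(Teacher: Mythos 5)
First, note that the paper itself offers no proof of this lemma: it is imported verbatim from Auslander--Smal\o\ \cite[Theorem 4.5, Proposition 3.7]{AS}, so there is no internal argument to compare against. Measured on its own terms, your proposal correctly and completely handles two of the three claims. The implication (1) $\Rightarrow$ (2) is fine (and your observation that image-closure is not needed there is correct), and your proof of $\PP_0(\CC)=\add C^\Lambda$ is a genuine, complete argument: the inclusion $\PP_0(\CC)\subset\add C^\Lambda$ via splitting the covering surjection, and the converse via lifting $p_Q\circ f$ along $\pi$, factoring through the approximation, and invoking left minimality of $f$ to make the block-triangular endomorphism $\psi$ (hence its diagonal corner $\pi c_Q$) an automorphism, are all sound.

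The genuine gap is the direction (2) $\Rightarrow$ (1), which is precisely where the hypothesis that $\CC$ is closed under images is used and which carries the entire content of the lemma. What you write there is a description of the difficulty rather than a proof, and the construction you sketch --- assembling the approximation target from $P_0$ ``together with the quotients of objects of $\add P$ that lie in $\CC$'' --- cannot work as stated: already for $\CC=\Fac P$ over a representation-infinite algebra there are infinitely many isomorphism classes of quotients of the $P^m$ lying in $\CC$, so no single object of $\mod\Lambda$ can be built by accumulating them. The actual argument requires a finiteness principle tied to the \emph{source} $M$ rather than to $\CC$: one uses that every $g\colon M\to C$ with $C\in\CC\subset\Fac P$ extends to a surjection $M\oplus P_1\defl C$ (via the exact sequence $0\to M\times_C P_1\to M\oplus P_1\to C\to 0$ obtained from a covering surjection $P_1\defl C$), and then bounds the relevant data by the finite length of $M$ and of $\Hom_\Lambda(M,P)$, with image-closure guaranteeing that the resulting universal quotient stays in $\CC$. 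Deferring this step to ``the Auslander--Smal\o\ analysis of $\Fac$'' is exactly what the paper does by citing \cite{AS}, but it means your proposal does not actually prove the equivalence.
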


By using this, we obtain the following characterization of functorially finite wide subcategories.
\begin{proposition}\label{prop:ffwide}
  Let $\Lambda$ be an artin algebra and $\WW$ a wide subcategory of $\mod\Lambda$. Then the following are equivalent.
  \begin{enumerate}[font=\upshape]
    \item[(1)] $\WW$ is functorially finite.
    \item[(2)] $\WW$ is covariantly finite.
    \item[(2)$^\prime$] $\WW$ is contravariantly finite.
    \item[(3)] $\WW$ has an $\Ext$-progenerator.
    \item[(4)] $\WW$ is equivalent to $\mod\Gamma$ for some artin algebra $\Gamma$.
  \end{enumerate}
\end{proposition}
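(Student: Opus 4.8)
The plan is to prove the equivalence of (2), (3) and (4) on the ``covariant side'', and then to deduce (2)$'$ from (4) by an appeal to the standard duality, after which (1) is immediate. Two preliminary observations organize everything. First, a wide subcategory $\WW$ is an exact abelian subcategory of $\mod\Lambda$: since $\WW$ is closed under kernels and cokernels, these are computed in $\WW$ exactly as in $\mod\Lambda$, so the inclusion is exact and an object of $\WW$ is $\Ext$-projective precisely when it is projective in the abelian category $\WW$. Second, for such a $\WW$ one always has $\PP_0(\WW) = \PP(\WW)$: the inclusion $\PP_0(\WW) \subset \PP(\WW)$ is general, while if $P \in \PP(\WW)$ and $\pi \colon C \defl P$ is a surjection with $C \in \WW$, then $\ker \pi \in \WW$ (wide subcategories are closed under kernels), so $0 \to \ker\pi \to C \to P \to 0$ lies in $\WW$ and splits by $\Ext_\Lambda^1(P,\ker\pi) = 0$, showing $P \in \PP_0(\WW)$.

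For (2) $\Leftrightarrow$ (3) I would invoke Lemma \ref{ice:lem:covfincover}, valid since $\WW$ is closed under images. If $\WW$ is covariantly finite it has a finite cover, hence a finite minimal cover $P$ with $\add P = \PP_0(\WW) = \PP(\WW)$ by Proposition \ref{ice:prop:facmin} and the observation above; since $P$ is a cover, every $X \in \WW$ admits a surjection $P_0 \defl X$ with $P_0 \in \add P$ whose kernel again lies in $\WW$, so $\WW$ has enough $\Ext$-projectives and $P$ is an $\Ext$-progenerator. Conversely, an $\Ext$-progenerator $P$ satisfies $\WW \subset \Fac P$ by enough $\Ext$-projectivity, so $P$ is a finite cover and Lemma \ref{ice:lem:covfincover} returns covariant finiteness.

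The implication (3) $\Rightarrow$ (4) is the main point, and I expect it to be the chief obstacle. Given an $\Ext$-progenerator $P$, set $\Gamma := \End_\Lambda(P)$, which is an artin algebra, and consider $F := \Hom_\Lambda(P,-) \colon \WW \to \mod\Gamma$. Because $P$ is projective in the abelian category $\WW$, the functor $F$ is exact; it restricts to an equivalence $\add P \xrightarrow{\sim} \proj\Gamma$ sending $P$ to $\Gamma$, and every object of $\WW$ has a two-term presentation by $\add P$ (enough $\Ext$-projectives, applied twice). That $F$ is then an equivalence is the classical characterization of $\mod\Gamma$ as the abelian category with projective generator $\Gamma$: essential surjectivity is obtained by lifting a projective presentation of a $\Gamma$-module through the equivalence on projectives and taking the cokernel in $\WW$, and full faithfulness follows by comparing presentations and applying the five lemma. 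The converse (4) $\Rightarrow$ (3) is formal: an equivalence $\WW \simeq \mod\Gamma$ of abelian categories carries the projective generator $\Gamma$ and the enough-projectives property of $\mod\Gamma$ back to an $\Ext$-progenerator of $\WW$, using again that projective objects of $\WW$ coincide with its $\Ext$-projectives.

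Finally, to bring in (2)$'$ I would invoke the standard duality $D \colon \mod\Lambda \to \mod\Lambda^{\op}$. It is an exact contravariant equivalence with $D^2 \simeq \id$, so it sends the wide subcategory $\WW$ to a wide subcategory $D\WW$ of $\mod\Lambda^{\op}$, exchanges left and right approximations (hence $\WW$ is contravariantly finite if and only if $D\WW$ is covariantly finite), and turns an equivalence $\WW \simeq \mod\Gamma$ into $D\WW \simeq (\mod\Gamma)^{\op} \simeq \mod\Gamma^{\op}$, so condition (4) is preserved. Since the equivalence (2) $\Leftrightarrow$ (4) was proved for an arbitrary artin algebra, applying it to $D\WW$ over $\Lambda^{\op}$ yields (2)$' \Leftrightarrow$ (4). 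Thus (2), (2)$'$, (3) and (4) are all equivalent; as (1) is by definition the conjunction of (2) and (2)$'$, and these are now equivalent to each other, (1) is equivalent to them as well, completing the proof.
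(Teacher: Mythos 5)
Your proposal is correct and follows essentially the same route as the paper: the equivalence of covariant finiteness with having a finite cover (Lemma \ref{ice:lem:covfincover}) combined with the minimal-cover results of \cite{AS}, the Morita-type argument $\Hom_\Lambda(P,-)\colon \WW \to \mod\End_\Lambda(P)$ for (3) $\Rightarrow$ (4), and duality for the contravariant side. The only differences are organizational (you prove $\PP_0(\WW)=\PP(\WW)$ explicitly and make the duality argument explicit where the paper says ``by the dual argument''), so no further comment is needed.
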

\begin{proof}
  Note that $\WW$ is closed under images, so we can apply Lemma \ref{ice:lem:covfincover}.

  (1) $\Rightarrow$ (2), (2)$^\prime$: Trivial.

  (2) $\Rightarrow$ (3):
  Take a left minimal $\WW$-approximation $\Lambda \to P$ with $P$ in $\WW$.
  Then $P$ is a minimal cover of $\Lambda$ with $\add P = \PP_0(\WW)$ by Lemma \ref{ice:lem:covfincover}.
  We claim that $P$ is an $\Ext$-progenerator of $\CC$. First, $P$ is $\Ext$-projective in $\WW$ since it is split projective.
  Second, since $P$ covers $\WW$ and $\WW$ is closed under kernels, for every object $W$ in $\WW$, there is a short exact sequence $0 \to W' \to P' \to W \to 0$ with $P' \in \add P$ and $W' \in \WW$. This shows that $\WW$ has enough $\Ext$-projectives with $\PP(\CC) = \add P = \PP_0(\CC)$.

  (3) $\Rightarrow$ (4):
  Let $P$ be an $\Ext$-progenerator of $\WW$. Define $\Gamma:= \End_\Lambda(P)$ and consider the functor $\Hom_\Lambda(P,-) \colon \mod\Lambda \to \mod \Gamma$. Then it easy to see that this induces an equivalence $\WW \equi \mod\Gamma$ by the standard argument in the Morita theory.

  (4) $\Rightarrow$ (2): Let $F \colon \mod\Gamma \equi \WW$ be an equivalence. Then the composition $\mod\Gamma \xrightarrow{F} \WW \hookrightarrow \mod\Lambda$ is exact since $\WW$ is closed under kernels and cokernels in $\mod\Lambda$. Thus $\add(F \Gamma)$ is a cover of $\WW$, since $\Gamma$ is a cover of $\mod\Gamma$ and $F$ preserves surjectivity.
  This implies that $\WW$ is covariantly finite by Lemma \ref{ice:lem:covfincover}.

  By the dual argument, (4) implies (2)$^\prime$. Thus (4) implies (1).
\end{proof}

Now let us return to the hereditary setting. The key observation is the following.

\begin{lemma}\label{ice:lem:p0sub}
  Let $\Lambda$ be a hereditary artin algebra and $\CC$ an ICE-closed subcategory of $\mod\Lambda$ with enough $\Ext$-projectives. Let $P$ be an object in $\PP_0(\CC)$ and $Q$ a submodule of $P$ satisfying $Q \in \CC$. Then $Q$ is also in $\PP_0(\CC)$.
\end{lemma}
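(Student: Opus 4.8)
The plan is to first pin down where $Q$ sits inside $\CC$, and then to prove split projectivity by a pullback that reduces the claim to a submodule of a \emph{smaller} split projective. Write $U := P(\CC)$ for the basic $\Ext$-progenerator of $\CC$, so that $U$ is rigid, $\CC = \ccok U$, $\PP(\CC) = \add U$ by Proposition \ref{ice:prop:rigidice}, and $\PP_0(\CC) = \add U_0$ for the $\Fac$-minimal version $U_0$ of $U$ by Proposition \ref{ice:prop:facmin}. The first step is to observe that $Q \in \PP(\CC)$. Indeed, $P \in \PP_0(\CC) \subseteq \add U \subseteq \Sub U$, so its submodule $Q$ lies in $\Sub U$; on the other hand $Q \in \CC = \ccok U \subseteq \Fac U$. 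Hence Lemma \ref{ice:lem:image} gives $Q \in \Sub U \cap \Fac U = \add U = \PP(\CC)$.

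The second step is to record that $N := P/Q \in \CC$: composing any surjection onto $Q$ (for instance $\id_Q$) with the inclusion $\iota \colon Q \hookrightarrow P$ gives a map in $\CC$ with cokernel $P/Q$, and $\CC$ is closed under cokernels. Thus we obtain a short exact sequence $0 \to Q \to P \to N \to 0$ with $N \in \CC$, which is the input to the reduction.

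The heart of the argument is a self-reduction by pullback, which I would run by induction on $\length Q$. Since $\PP_0(\CC)$ is a finite minimal cover of $\CC$ by Proposition \ref{ice:prop:facmin}, we have $N \in \CC \subseteq \Fac \PP_0(\CC)$, so we may choose a minimal split-projective cover $q \colon R \defl N$ with $R \in \PP_0(\CC)$. Pulling back $0 \to Q \to P \to N \to 0$ along $q$ produces an object $E$ fitting into $0 \to Q \to E \to R \to 0$ and $0 \to N' \to E \to P \to 0$, where $N' = \ker q \subseteq R$. The first sequence has $E \in \CC$ (extension-closedness, as $Q, R \in \CC$) and splits because $R$ is $\Ext$-projective and $Q \in \CC$, so $E \iso Q \oplus R$; the second sequence then splits because $P \in \PP_0(\CC)$ and $E \in \CC$, giving $Q \oplus R \iso P \oplus N'$. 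In particular $N'$ is a summand of $E \iso Q \oplus R \in \CC$, so $N' \in \CC$, and $N' \subseteq R \in \PP_0(\CC)$ is exactly the hypothesis of the lemma applied to $N'$. Since $Q$ is a summand of $P \oplus N'$ and $\PP_0(\CC) = \add U_0$ is closed under summands, it therefore suffices to know $N' \in \PP_0(\CC)$. As $q$ is a minimal cover while $P \defl N$ is some cover, we have $\length R \le \length P$, whence $\length N' = \length Q + \length R - \length P \le \length Q$; when this inequality is strict, the inductive hypothesis applies to $N' \subseteq R$ and we conclude $Q \in \PP_0(\CC)$.

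The main obstacle is the boundary case $\length R = \length P$, which occurs precisely when $P \defl N$ is \emph{itself} the minimal split-projective cover, so that $N' \iso Q$ and the pullback produces no drop in length; equivalently, $Q$ is superfluous in $P$, i.e.\ $Q \subseteq \rad P$ with $P \defl P/Q$ right minimal. Here the self-reduction stalls and a separate argument is required. I expect to handle this by an outer induction on $\length P$ together with a reduction to indecomposable $P$, exploiting right minimality: concretely, using that $Q \subseteq \rad P$ and that $P$ is split projective to force every surjection $C \defl Q$ with $C \in \CC$ to split. Making this superfluous case rigorous — ruling out a non-split cover of a superfluous submodule of a split projective — is the delicate point of the proof, and is where I would expect to spend most of the effort.
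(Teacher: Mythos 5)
There is a genuine gap, and it sits exactly where you say it does: the inductive reduction does not close. Your pullback step is correct as far as it goes --- pulling back $0 \to Q \to P \to P/Q \to 0$ along a right minimal surjection $q \colon R \defl P/Q$ with $R \in \PP_0(\CC)$ does give $Q \oplus R \iso E \iso P \oplus N'$ with $N' = \ker q \in \CC$ and $N' \subset R$, reducing the claim to the pair $(N', R)$. But the induction only advances when $\length R < \length P$, and the boundary case $\length R = \length P$ (i.e.\ $P \defl P/Q$ is itself right minimal, so $N' \iso Q$ and the reduction returns the same instance) is not a degenerate case one can wave away: it occurs already for $\Lambda = k(1 \to 2)$, $\CC = \mod\Lambda$, $P = P_1$, $Q = \rad P_1 = P_2$. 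Your proposed fix (``outer induction on $\length P$ together with a reduction to indecomposable $P$, exploiting right minimality'') is a plan, not an argument, and it is precisely the hard case. A telling symptom is that your inductive step never uses that $\Lambda$ is hereditary (your Step 1, where hereditariness enters via Lemma \ref{ice:lem:image} to show $Q \in \PP(\CC)$, is never used afterwards); since the lemma should not be expected to hold without hereditariness, the essential input is missing from the part of the proof that does the work. Also, your assertion $\length R \le \length P$ needs its own justification: minimality of the cover $\PP_0(\CC)$ in the sense of Proposition \ref{ice:prop:facmin} is a statement about subcategories, and to compare lengths you must lift each surjection through the other (using $\Ext^1(P,N') = 0 = \Ext^1(R,Q)$, which requires already knowing $N' \in \CC$) and then invoke right minimality.

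The paper's proof avoids induction entirely by covering $Q$ rather than $P/Q$, and this is where hereditariness is used. Take a surjection $\pi \colon P_0 \defl Q$ with $P_0 \in \PP_0(\CC)$. Since $\Lambda$ is hereditary, the inclusion $Q \hookrightarrow P$ induces a surjection $\Ext^1_\Lambda(P,\ker\pi) \to \Ext^1_\Lambda(Q,\ker\pi)$, so the extension $0 \to \ker\pi \to P_0 \to Q \to 0$ is the pullback along $Q \hookrightarrow P$ of some extension $0 \to \ker\pi \to E \to P \to 0$; the resulting diagram shows $E$ is an extension of $P/Q \in \CC$ by $P_0 \in \CC$, hence $E \in \CC$. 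Split projectivity of $P$ then splits $E \defl P$, and a pullback of a split sequence is split, so $P_0 \defl Q$ splits and $Q \in \add P_0 \subset \PP_0(\CC)$. In short: the missing idea is to push the covering extension of $Q$ out to $P$ via the hereditary $\Ext^1$-surjectivity, rather than to pull the sequence $P \defl P/Q$ back along a cover of the quotient.
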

\begin{proof}
  Since there is an $\Ext$-progenerator of $\CC$ by Proposition \ref{ice:prop:icerigid}, the category $\CC$ has a finite cover, thus has a minimal cover $\PP_0(\CC)$ by Proposition \ref{ice:prop:facmin}.
  Therefore there is a surjection $\pi \colon P_0 \defl Q$ with $P_0 \in \PP_0(\CC)$, hence we obtain a short exact sequence $0 \to \ker\pi \to P_0 \xrightarrow{\pi} Q \to 0$.

  Now since $\Lambda$ is hereditary, the map $\Ext_\Lambda^1(P,\ker \pi) \to \Ext_\Lambda^1(Q,\ker\pi)$ induced by the inclusion $Q \hookrightarrow P$ is surjective. Thus we obtain the following exact commutative diagram.
  \[
  \begin{tikzcd}
    & & 0 \dar & 0 \dar \\
    0 \rar & \ker\pi \rar\dar[equal] & P_0 \ar[rd, phantom, "{\rm p.b.}"] \rar["\pi"] \dar & Q \rar \dar & 0 \\
    0 \rar & \ker\pi \rar & E \rar["p"] \dar & P \rar \dar & 0 \\
    & & P/Q \rar[equal] \dar & P/Q \dar \\
    & & 0 & 0
  \end{tikzcd}
  \]
  Since $\CC$ is closed under cokernels, we have $P/Q \in \CC$. Then the middle vertical exact sequence implies $E \in \CC$ since $\CC$ is extension-closed. Now $p$ should split since $P$ is split projective in $\CC$, thus the middle horizontal short exact sequence splits. It follows that so does the top horizontal sequence, hence $Q$ is a direct summand of $P_0$. Therefore $Q \in \PP_0(\CC)$ holds.
\end{proof}

By using this, we can show the following last result in this section on the relation between $\Fac$-minimal rigid modules and wide subcategories. We denote by $\rigidz \Lambda$ the set of isomorphism classes of basic rigid $\Lambda$-modules which are $\Fac$-minimal.
\begin{proposition}\label{ice:prop:widefacmin}
  Let $\Lambda$ be a hereditary artin algebra. Then the following diagram commutes and the horizontal maps are bijections, where the map $(-)_0$ is given by taking the $\Fac$-minimal version.
  \[
  \begin{tikzcd}
    \rigidz \Lambda  \rar[shift left = .3ex, "\ccok"] \ar[dd, bend right=75, "1"'] \dar[hookrightarrow]
    & \fwide\Lambda \lar[shift left = .3ex, "P"] \ar[dd, bend left=75, "1"] \dar[hookrightarrow] \\
    \rigid \Lambda \rar[shift left = .3ex, "\ccok"] \dar["(-)_0"', twoheadrightarrow]
    & \icep\Lambda  \dar["\WWW", twoheadrightarrow] \lar[shift left = .3ex, "P"] \\
    \rigidz \Lambda  \rar[shift left = .3ex, "\ccok"] & \fwide\Lambda \lar[shift left = .3ex, "P"]
  \end{tikzcd}
  \]
  In particular, functorially finite wide subcategories are in bijection with basic $\Fac$-minimal rigid modules.
\end{proposition}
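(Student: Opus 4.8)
The plan is to realize the two horizontal bijections as restrictions of the mutually inverse maps $\ccok$ and $P$ between $\rigid\Lambda$ and $\icep\Lambda$ of Theorem \ref{ice:thm:main}, and then to check that the three cells commute. The first thing I would record is the purely formal observation that \emph{an ICE-closed subcategory closed under epi-kernels is automatically wide}: if $f\colon M \to N$ has $M,N \in \CC$, then $\ker f = \ker(M \defl \im f)$ is an epi-kernel between objects of $\CC$ (using closure under images), so $\ker f \in \CC$, giving closure under kernels. With this, $\ccok$ sends $\rigidz\Lambda$ into $\fwide\Lambda$: for $U \in \rigidz\Lambda$ the subcategory $\ccok U$ is ICE-closed with $\Ext$-progenerator $U$ (Proposition \ref{ice:prop:rigidice}), so $\PP(\ccok U)=\add U$ is $\Fac$-minimal; Proposition \ref{ice:prop:epikerfacmin} then makes it closed under epi-kernels, hence wide, and it is functorially finite since it has an $\Ext$-progenerator (Proposition \ref{prop:ffwide}). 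Conversely, for $\WW \in \fwide\Lambda$, Proposition \ref{prop:ffwide} supplies an $\Ext$-progenerator $P(\WW)$, which is rigid by Proposition \ref{ice:prop:icerigid} and $\Fac$-minimal by Proposition \ref{ice:prop:epikerfacmin} (as $\WW$ is closed under kernels, hence epi-kernels); thus $P(\WW)\in\rigidz\Lambda$. Being restrictions of mutually inverse maps, they are mutually inverse bijections.

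The two vertical ``$1$'' arrows are then immediate: the left expresses that the $\Fac$-minimal version of an already $\Fac$-minimal module is itself, which is the uniqueness in Proposition \ref{ice:prop:facmin}, and the right is the identity $\WWW(\WW)=\WW$ valid for any wide subcategory. The essential content is the commutativity of the inner square, namely
\[
  \WWW(\ccok U) = \ccok(U_0) \qquad (U \in \rigid\Lambda),
\]
where $U_0$ is the $\Fac$-minimal version of $U$. Writing $\CC := \ccok U$, we have $\add U = \PP(\CC)$ and $\add U_0 = \PP_0(\CC)$, the latter because $U$ is a finite cover of $\CC$ and Proposition \ref{ice:prop:facmin} identifies the $\Fac$-minimal version with the minimal cover. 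I would prove the square by two inclusions.

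For $\ccok U_0 \subset \WWW(\CC)$ the key step is $\PP_0(\CC) \subset \WWW(\CC)$: given $P \in \PP_0(\CC)$ and $\varphi\colon C \to P$ with $C \in \CC$, the image $\im\varphi$ is a submodule of $P$ lying in $\CC$, so Lemma \ref{ice:lem:p0sub} puts $\im\varphi$ in $\PP_0(\CC)$; then $C \defl \im\varphi$ splits, so $\ker\varphi$ is a summand of $C$ and lies in $\CC$, whence $P \in \WWW(\CC)$. Since $\WWW(\CC)$ is wide (hence closed under cokernels) and contains $\add U_0 = \PP_0(\CC)$, it contains $\ccok U_0$. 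For the reverse inclusion, take $W \in \WWW(\CC)\subset\CC$; as $\add U_0$ covers $\CC$, choose a surjection $P_0 \defl W$ with $P_0 \in \add U_0$, note $\ker(P_0\to W)\in\CC$ by the defining property of $\WWW(\CC)$, cover it by some $P_1 \in \add U_0$, and read off $W = \coker(P_1 \to P_0) \in \ccok U_0$. This proves the square; in particular it shows that $\WWW$ lands in $\fwide\Lambda$, while surjectivity of $\WWW$ and of $(-)_0$ follows from the ``$1$'' arrows. The final ``in particular'' is then the top (equivalently bottom) horizontal bijection.

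The main obstacle is the inclusion $\PP_0(\CC) \subset \WWW(\CC)$, which is exactly where the hereditary hypothesis enters through Lemma \ref{ice:lem:p0sub}: without knowing that a submodule of a split-projective object lying in $\CC$ is again split-projective, there is no way to force $\ker\varphi$ back into $\CC$. Everything else is bookkeeping with covers and the already-established dictionary among $\Ext$-progenerators, $\Fac$-minimality, and closure under epi-kernels.
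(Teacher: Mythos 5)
Your proposal is correct and follows essentially the same route as the paper: the same reduction to the characterization of wideness of $\ccok U$ via epi-kernels and $\Fac$-minimality (Proposition \ref{ice:prop:epikerfacmin}), and the same key step $\PP_0(\CC)\subset\WWW(\CC)$ via Lemma \ref{ice:lem:p0sub}. The only cosmetic difference is that the paper phrases the inner square as ``$U_0$ is an $\Ext$-progenerator of $\WWW(\ccok U)$'' and invokes Proposition \ref{ice:prop:icerigid}, whereas you prove the equality $\WWW(\ccok U)=\ccok U_0$ directly by double inclusion; the underlying arguments coincide.
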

\begin{proof}
  It suffices to show the following for $U \in \rigid\Lambda$ by Theorem \ref{ice:thm:main}.
  \begin{enumerate}
    \item $\ccok U$ is wide if and only if $U$ is $\Fac$-minimal.
    \item $U_0$ is an $\Ext$-progenerator of $\WWW(\ccok U)$, where $U_0$ is the $\Fac$-minimal version of $U$.
  \end{enumerate}
  By Theorem \ref{ice:thm:main}, the category $\ccok U$ is an ICE-closed subcategory of $\mod\Lambda$ with $\PP(\ccok U) = \add U$.

  (1)
It is easy to check that an ICE-subcategory of $\mod\Lambda$ is wide if and only if it is closed under epi-kernels. Thus the assertion follows from Proposition \ref{ice:prop:epikerfacmin}.

  (2)
  Put $\CC:= \ccok U$. Then $\PP_0(\CC) = \add U_0$ holds by Proposition \ref{ice:prop:facmin}.
  First we show $\PP_0(\CC) \subset \WWW(\CC)$. It suffices to show that every map $\varphi\colon C \to P_0$ with $C \in \CC$ and $P_0 \in \PP_0(\CC)$ satisfies $\ker \varphi \in \CC$.
  Since $\CC$ is closed under images, we have $\im \varphi \in \CC$, and then Lemma \ref{ice:lem:p0sub} shows that $\im \varphi$ is split projective in $\CC$.
  Thus the induced surjection $C \defl \im\varphi$ splits, hence $\ker \varphi$ is a direct summand of $C$. Thus $\ker\varphi \in \CC$ holds.

  Now we have shown $U_0 \in \WWW(\CC)$. Moreover, $U_0$ is a cover of $\WWW(\CC)$ since it is a cover of $\CC$, and $U_0$ is split projective in $\WWW(\CC)$ since it is so in $\CC$.
  Then it is easy to see that $U_0$ is an $\Ext$-progenerator of $\WWW(\CC)$ because $\WWW(\CC)$ is closed under kernels.
\end{proof}

It was shown in \cite[Corollary 2.17]{IT} that the maps $\WWW \colon \ftors\Lambda \rightleftarrows \fwide\Lambda\colon \Fac$ are mutually inverse bijections (although their definition of $\fwide\Lambda$ is a bit different from ours). For the convenience of the reader, we give a short proof of this in our context.
\begin{corollary}
  Let $\Lambda$ be a hereditary artin algebra. Then $\WWW \colon \ftors\Lambda \rightleftarrows \fwide\Lambda\colon \Fac$ are mutually inverse bijections between the sets of functorially finite torsion classes and functorially finite wide subcategories.
\end{corollary}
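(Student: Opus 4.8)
The plan is to transport the statement along the bijections with rigid modules from Propositions \ref{ice:prop:tiltcompati} and \ref{ice:prop:widefacmin}, reducing it to an elementary identity about $\Fac$-minimal versions and co-Bongartz completions. By Theorem \ref{ice:thm:main}, $P$ is a bijection $\icep\Lambda\cong\rigid\Lambda$; it restricts to the classical bijection $P\colon\ftors\Lambda\cong\stilt\Lambda$ (top row of Proposition \ref{ice:prop:tiltcompati}) and to $P\colon\fwide\Lambda\cong\rigidz\Lambda$ (Proposition \ref{ice:prop:widefacmin}). Under these identifications, the commutativity in Proposition \ref{ice:prop:tiltcompati} shows that $\Fac\colon\fwide\Lambda\to\ftors\Lambda$ corresponds to the co-Bongartz completion $(\ov{-})\colon\rigidz\Lambda\to\stilt\Lambda$, and the commutativity in Proposition \ref{ice:prop:widefacmin} shows that $\WWW\colon\ftors\Lambda\to\fwide\Lambda$ corresponds to the $\Fac$-minimal version $(-)_0\colon\stilt\Lambda\to\rigidz\Lambda$. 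Hence it suffices to prove that $(-)_0$ and $(\ov{-})$ are mutually inverse bijections between $\stilt\Lambda$ and $\rigidz\Lambda$.

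First I would record the only two facts needed, both asserting that the relevant operation depends on a module only through its $\Fac$-closure. For the $\Fac$-minimal version, if $M$ is any cover of a subcategory $\CC$ then $\add M_0=\PP_0(\CC)$ by Proposition \ref{ice:prop:facmin}; in particular $M_0$ is determined by $\Fac M$. For the co-Bongartz completion, $\ov{M}$ is by definition the unique basic support tilting module with $\Fac\ov{M}=\Fac M$, so it too is determined by $\Fac M$. I would also note that both operations preserve the $\Fac$-closure, giving $\Fac M_0=\Fac M=\Fac\ov{M}$.

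The two composites are then checked directly. For $U\in\rigidz\Lambda$, since $U$ is $\Fac$-minimal we have $\add U=\PP_0(\Fac U)$ by Proposition \ref{ice:prop:facmin}; as $\ov{U}$ is a cover of $\Fac\ov{U}=\Fac U$, its $\Fac$-minimal version satisfies $\add(\ov{U})_0=\PP_0(\Fac U)=\add U$, hence $(\ov{U})_0=U$. For $T\in\stilt\Lambda$ we have $\Fac T_0=\Fac T$, and since $T$ is already support tilting, the uniqueness of the co-Bongartz completion forces $\ov{(T_0)}=T$. This establishes both identities, so $(-)_0$ and $(\ov{-})$ are mutually inverse, and transporting back along $P$ yields the claim.

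The main obstacle — really the only point requiring care — is the bookkeeping that legitimises the reduction: one must confirm that $\Fac$ and $\WWW$, defined on all of $\icep\Lambda$, genuinely restrict to maps $\fwide\Lambda\to\ftors\Lambda$ and $\ftors\Lambda\to\fwide\Lambda$, and that under $P$ these restrictions agree on the nose with $(\ov{-})$ and $(-)_0$. All of this is already encoded in the commutativity and the labelling of the diagrams in Propositions \ref{ice:prop:tiltcompati} and \ref{ice:prop:widefacmin}, so once the correspondence is set up the remaining verification is just the short $\Fac$-closure computation above.
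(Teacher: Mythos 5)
Your proposal is correct and follows essentially the same route as the paper: both reduce the statement via the commutative diagrams of Propositions \ref{ice:prop:tiltcompati} and \ref{ice:prop:widefacmin} to showing that the co-Bongartz completion and the $\Fac$-minimal version are mutually inverse between $\rigidz\Lambda$ and $\stilt\Lambda$, and both verify the two composites by the same $\Fac$-closure computations (the paper phrases them as $U=P(\Fac U_0)$ and $U=(P(\Fac U))_0$, which coincide with your $\ov{(T_0)}=T$ and $(\ov{U})_0=U$ since $P(\Fac V)=\ov{V}$ for rigid $V$).
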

\begin{proof}
  Let $U$ be a basic rigid $\Lambda$-module. According to Propositions \ref{ice:prop:tiltcompati} and \ref{ice:prop:widefacmin}, it suffices to show the following claims.
  \begin{enumerate}
    \item If $U$ is support tilting, then $U = P(\Fac U_0)$ holds, where $U_0$ is a $\Fac$-minimal version of $U$.
    \item If $U$ is $\Fac$-minimal, then $U$ is a $\Fac$-minimal version of $P(\Fac U)$.
  \end{enumerate}

  (1)
  By the definition of the $\Fac$-minimal version, we have $\Fac U_0 = \Fac U$. Thus the claim follows from $U = P(\Fac U)$, which holds by Theorem \ref{ice:thm:main}.

  (2)
  Since $U$ is $\Fac$-minimal, $U$ is a minimal cover of $\Fac U$. Since $P(\Fac U)$ is a cover of $\Fac U$, its $\Fac$-minimal version coincides with $U$.
\end{proof}

\section{Proof using exceptional sequences}\label{sec:5}
The most non-trivial part of our main theorem is that $\ccok P$ is ICE-closed for a rigid module $P$, Proposition \ref{ice:prop:rigidice} (2). In this section, we will give another proof of this using some results on \emph{exceptional sequences} over hereditary algebras.
As a by product, we will give the relation between ICE-closed subcategories and the tilting theory of wide subcategories.

The following is the key result in this section, whose proof requires exceptional sequences.
\begin{proposition}\label{prop:excmain}
  Let $\Lambda$ be a hereditary artin algebra and $P$ a rigid $\Lambda$-module. Denote by $\la P \ra_{\wide}$ the smallest wide subcategory containing $P$. Then $\la P \ra_{\wide}$ is equivalent to $\mod\Gamma$ for some hereditary artin algebra $\Lambda$, and $P$ is a tilting $\Gamma$-module under the equivalence $\la P \ra_{\wide} \equi \mod\Gamma$.
\end{proposition}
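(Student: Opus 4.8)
The plan is to identify $\la P\ra_{\wide}$ with the module category of a hereditary algebra via exceptional sequences, and then to recognise $P$ as a tilting module there by a dimension count.

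First I would write $P = \bigoplus_{i=1}^{r} P_i$ as a direct sum of pairwise non-isomorphic indecomposables. Since $\Lambda$ is hereditary and $P$ is rigid, each $P_i$ is indecomposable rigid, hence a brick: $\End_\Lambda(P_i)$ is a division ring, so every $P_i$ is exceptional. Moreover $\Ext_\Lambda^1(P_i,P_j)=0$ for all $i,j$ by rigidity, so the only issue is to kill backward homomorphisms. Invoking the no-cycle property of the indecomposable summands of a partial tilting module over a hereditary algebra, there is no cyclic chain of nonzero non-isomorphisms among the $P_i$; hence after reindexing we may assume $\Hom_\Lambda(P_i,P_j)=0$ whenever $i>j$. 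Then $(P_1,\dots,P_r)$ is an exceptional sequence in $\mod\Lambda$.

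The crux is the structural input from the theory of exceptional sequences over hereditary artin algebras: the smallest wide subcategory $\WW:=\la P\ra_{\wide}$ containing $(P_1,\dots,P_r)$ is equivalent to $\mod\Gamma$ for a hereditary artin algebra $\Gamma$ with exactly $r$ simple modules, and $(P_1,\dots,P_r)$ becomes a complete exceptional sequence of $\mod\Gamma$ under this equivalence. I would prove this by induction on $r$ using perpendicular categories (Geigle--Lenzing, Schofield, Crawley--Boevey): for $r=1$ one has $\la P_1\ra_{\wide}\equi\mod\End_\Lambda(P_1)$, which is semisimple with a single simple object; for the inductive step, $P_1,\dots,P_{r-1}$ lie in the right perpendicular category $P_r^{\perp}$, itself a hereditary module category with one fewer simple, where they form an exceptional sequence of length $r-1$, and one glues the extra exceptional object $P_r$ back in. I expect this gluing step --- in particular the verification that the number of simple objects of $\WW$ is exactly $r=|P|$ and that the ambient algebra stays hereditary --- to be the main obstacle, and the precise point where exceptional sequences are indispensable.

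Finally I would transport $P$ across the equivalence to $\ov{P}\in\mod\Gamma$. Because the inclusion $\WW\hookrightarrow\mod\Lambda$ is exact, extension groups agree, so $\ov{P}$ is a basic rigid, hence partial tilting, $\Gamma$-module with exactly $|\ov{P}|=r$ indecomposable summands. Since $\Gamma$ is hereditary with $r$ simple modules, Bongartz completion extends $\ov{P}$ to a basic tilting module $\ov{P}\oplus N$, which necessarily has exactly $r$ indecomposable summands; comparing counts forces $N=0$, so $\ov{P}$ is already tilting. This gives the assertion.
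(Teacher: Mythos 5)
Your proposal is correct and follows essentially the same route as the paper: decompose the basic rigid module $P$ into an exceptional sequence (the paper cites \cite[Corollary 4.2]{HR} for this), identify $\la P\ra_{\wide}\equi\mod\Gamma$ with $\Gamma$ hereditary and $|\Gamma|=r$ (the paper cites \cite{CB,ringel,HK}), and conclude by the count $|P|=r=|\Gamma|$ plus Bongartz completion. The only difference is that you sketch proofs of the two cited lemmas (acyclicity of the Hom-quiver, and the perpendicular-category induction) rather than quoting them, which is fine.
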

Consequently, we can reprove Proposition \ref{ice:prop:rigidice} (2) and give some new results. Observe that any torsion class in a wide subcategory of $\mod\Lambda$ (regarded as an abelian category) is an ICE-closed subcategory of $\mod\Lambda$, whose proof is straightforward and omitted.
\begin{corollary}\label{cor:exc}
  Let $\Lambda$ be a hereditary artin algebra. Then the following hold.
  \begin{enumerate}
    \item Let $P$ be a rigid $\Lambda$-module. Then $\ccok P$ is a torsion class in $\la P \ra_{\wide}$, thus is an ICE-closed subcategory of $\mod\Lambda$. Moreover, $\ccok P$ has an $\Ext$-progenerator $P$.
    \item We have bijections $\ccok \colon \rigid\Lambda\rightleftarrows \icep\Lambda \colon P(-)$.
    \item Every ICE-closed subcategory with enough $\Ext$-projectives is a functorially finite torsion class in some wide subcategory which is equivalent to $\mod\Gamma$ for some hereditary artin algebra $\Gamma$.
    \item If $\Lambda$ is representation-finite, then a subcategory $\CC$ of $\mod\Lambda$ is ICE-closed if and only if $\CC$ is a torsion class in some wide subcategory of $\mod\Lambda$.
  \end{enumerate}
\end{corollary}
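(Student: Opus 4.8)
The plan is to deduce all four parts from Proposition \ref{prop:excmain} together with the bijection machinery already developed in Section \ref{ice:sec:3}. The central device is the equivalence $\WW := \la P \ra_{\wide} \equi \mod\Gamma$ furnished by Proposition \ref{prop:excmain}, under which the rigid module $P$ corresponds to a tilting $\Gamma$-module $T$. I would transport the assertion ``$\ccok P$ is a torsion class'' across this equivalence into a statement about tilting modules over the hereditary algebra $\Gamma$, where it is classical, and then read off parts (2)--(4) as formal consequences.

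For part (1), the first step is to observe that the inclusion $\WW \hookrightarrow \mod\Lambda$ is exact, since $\WW$ is a wide subcategory; hence a cokernel of a map between objects of $\add P \subset \WW$ computed in $\mod\Lambda$ coincides with the one computed inside $\WW$. Thus $\ccok P$ is carried by the equivalence $\WW \equi \mod\Gamma$ to the analogously defined category $\ccok_\Gamma T$ of cokernels in $\mod\Gamma$ of maps in $\add T$. Now $\Fac T$ is a functorially finite torsion class in $\mod\Gamma$, hence in particular CE-closed, with enough $\Ext$-projectives whose $\Ext$-progenerator is exactly $T$ (this is the standard tilting description of $\PP(\Fac T)$). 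Applying Proposition \ref{ice:prop:icerigid}(3) to the hereditary algebra $\Gamma$ then gives $\Fac T = \ccok_\Gamma T$. Transporting back, $\ccok P$ equals $\Fac_{\WW} P$ and is therefore a torsion class in $\WW = \la P \ra_{\wide}$. That $\ccok P$ is consequently ICE-closed in $\mod\Lambda$ is exactly the observation recorded immediately before the statement, and that $P$ is its $\Ext$-progenerator follows either from the tilting description above or directly from Proposition \ref{ice:prop:rigidice}(3).

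Parts (2)--(4) are then formal. For (2), part (1) shows that $\ccok$ lands in $\icep\Lambda$ and that $P(\ccok U) = U$ for $U \in \rigid\Lambda$, while Proposition \ref{ice:prop:icerigid} produces the map $P(-)$ together with the identity $\ccok P(\CC) = \CC$ for $\CC \in \icep\Lambda$; assembling these exactly as in the proof of Theorem \ref{ice:thm:main}, but now with well-definedness supplied by the exceptional-sequence argument, yields the mutually inverse bijections. For (3), any $\CC \in \icep\Lambda$ equals $\ccok P$ with $P = P(\CC)$ by (2), hence by (1) is a torsion class in $\WW \equi \mod\Gamma$ for a hereditary $\Gamma$; it is functorially finite because the corresponding torsion class $\Fac T$ is generated by a tilting module and so is functorially finite. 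For (4), in the representation-finite case Theorem \ref{ice:thm:main} gives $\ice\Lambda = \icep\Lambda$, so the forward implication is (3), while the converse is once more the pre-statement observation that a torsion class in any wide subcategory is ICE-closed in $\mod\Lambda$.

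The step I expect to be the main obstacle is the identification $\ccok P = \Fac_{\WW} P$ inside the wide subcategory, equivalently $\ccok_\Gamma T = \Fac T$ for the tilting module $T$ over hereditary $\Gamma$. The inclusion $\ccok_\Gamma T \subseteq \Fac T$ is immediate, and the reverse inclusion is where both the hereditary hypothesis and the tilting property are genuinely used, via Proposition \ref{ice:prop:icerigid}(3). A second point requiring care is that the ambient abelian structure in which these cokernels are computed must be that of $\WW$ (equivalently $\mod\Gamma$); this is legitimate precisely because $\WW$ is closed under kernels, cokernels and extensions in $\mod\Lambda$, so the inclusion is exact and no cokernels are lost or altered under the passage between $\mod\Lambda$ and $\WW$.
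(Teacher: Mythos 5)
Your proposal is correct and follows essentially the same route as the paper: both transport $\ccok P$ along the equivalence $\la P \ra_{\wide} \equi \mod\Gamma$ of Proposition \ref{prop:excmain}, identify its image with $\Fac FP = \ccok FP$ via the classical fact that a tilting module is an $\Ext$-progenerator of the torsion class it generates (your appeal to Proposition \ref{ice:prop:icerigid}(3) is just an explicit form of this step), and then deduce (2)--(4) formally from (1), Proposition \ref{ice:prop:icerigid}, and the representation-finite enough-$\Ext$-projectives result. No gaps; the points you flag as delicate (exactness of the inclusion of the wide subcategory, and where cokernels are computed) are precisely the points the paper also addresses.
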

\begin{proof}
  (1)
  Put $\WW:= \la P \ra_{\wide}$. By Proposition \ref{prop:excmain}, there is a hereditary artin algebra $\Gamma$ and an equivalence $F \colon \WW \equi \mod\Gamma$ such that $FP$ is a tilting $\Gamma$-module.
  Thus $\Fac FP$ is a torsion class in $\mod\Gamma$. We claim that $F$ induces an equivalence $\ccok P \equi \Fac FP$. Since $\WW$ is closed under cokernels, we have $\ccok P \subset \WW$. Since $F$ is an equivalence of abelian categories, it preserves cokernels, so $F(\ccok P) = \ccok FP$ holds.
  On the other hand, since $FP$ is a tilting $\Gamma$-module, it is well-known that $\Fac FP$ has an $\Ext$-progenerator $FP$ (see e.g. \cite[Theorem VI.2.5]{ASS}), which implies $\Fac FP = \ccok FP$. Thus $F(\ccok P) = \Fac FP$ holds, which gives an equivalence $\ccok P \equi \Fac FP$.

  Now since $\ccok P$ is a torsion class in a wide subcategory, $\ccok P$ is an ICE-closed subcategory of $\mod\Lambda$. Moreover, since it is exact equivalent to $\Fac FP$ via $F$, which has an $\Ext$-progenerator $FP$, it follows that $\ccok P$ has an $\Ext$-progenerator $P$.

  (2) and (3) follow from (1) and Proposition \ref{ice:prop:icerigid}.

  (4)
  The ``if" part follows from the remark before this corollary. Conversely, if $\Lambda$ is representation-finite, then every ICE-closed subcategory has enough $\Ext$-projectives by \cite[Corollary 3.15]{enostr}, thus (4) holds by (1).
\end{proof}
\begin{remark}
  In a forthcoming paper \cite{ES}, Corollary \ref{cor:exc} (4) will be generalized as follows: for any (possibly non-hereditary) length abelian category $\AA$, a subcategory $\CC$ of $\AA$ is ICE-closed if and only if $\CC$ is a torsion class in some wide subcategory of $\AA$.
\end{remark}

In the rest of this section, we prepare some results on exceptional sequences. \emph{In what follows, we assume that $\Lambda$ is a hereditary artin algebra}. For the basic facts on exceptional sequences on hereditary algebras, we refer the reader to \cite{CB,ringel,HK}.

First, we introduce the notion of \emph{perpendicular categories} in the sense of Geigel-Lenzing \cite{GL}, which is useful to study exceptional sequences.
\begin{definition}
  Let $\Lambda$ be an artin algebra and $\CC$ a collection of objects in $\mod\Lambda$. Then define two subcategories $\CC^\perp$ and $^\perp\CC$ of $\mod\Lambda$ as follows.
  \begin{enumerate}
    \item $\CC^\perp$ consists of modules $X$ with $\Hom_\Lambda(\CC,X) = 0 = \Ext_\Lambda^1(\CC,X)$.
    \item $^\perp\CC$ consists of modules $X$ with $\Hom_\Lambda(X,\CC) = 0 = \Ext_\Lambda^1(X,\CC)$.
  \end{enumerate}
\end{definition}
These subcategories are called right and left \emph{perpendicular categories} with respect to $\CC$.
\begin{definition}
  A sequence of $\Lambda$-modules $X = (X_1,X_2, \dots, X_r)$ is an \emph{exceptional sequence in $\mod\Lambda$} if it satisfies the following conditions.
  \begin{enumerate}
    \item Each $X_i$ is an indecomposable rigid $\Lambda$-module.
    \item $\Hom_\Lambda(X_j,X_i) = \Ext_\Lambda^1(X_j,X_i) = 0$ for any $i < j$.
  \end{enumerate}
\end{definition}
We will use the following basic results on exceptional sequences.
\begin{lemma}[{\cite[Corollary 4.2]{HR}}]\label{lem:exclem1}
  Let $P$ be a basic rigid $\Lambda$-module. Then there is an indecomposable decomposition $P = P_1 \oplus P_2 \cdots \oplus P_r$ such that $(P_1,P_2,\dots, P_r)$ is an exceptional sequence in $\mod\Lambda$.
\end{lemma}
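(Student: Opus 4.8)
The plan is to produce the required ordering of the indecomposable summands by a topological-sort argument, the point being that for a rigid module over a hereditary algebra the only genuine obstruction to being an exceptional sequence is the vanishing of certain $\Hom$-spaces, the $\Ext^1$-conditions coming for free. So first I would record this free part. Write $P = P_1 \oplus \cdots \oplus P_r$ with the $P_i$ indecomposable and pairwise non-isomorphic. Each $P_i$ is a direct summand of the rigid module $P$, hence $\Ext_\Lambda^1(P_i,P_j)=0$ for \emph{all} $i,j$; thus in the definition of an exceptional sequence the second condition is automatic on its $\Ext^1$-part, and it remains only to order the summands so that $\Hom_\Lambda(P_j,P_i)=0$ whenever $i<j$. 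Equivalently, I form the directed graph $Q_P$ on the vertex set $\{P_1,\dots,P_r\}$ with an arrow $P_a \to P_b$ whenever $a\neq b$ and $\Hom_\Lambda(P_a,P_b)\neq 0$; a valid ordering is precisely a topological sort of $Q_P$ (sources first). Such a sort exists if and only if $Q_P$ is acyclic.

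The heart of the matter is therefore the acyclicity of $Q_P$, and this is where I expect the real work to lie. Since $\Lambda$ is hereditary, each indecomposable $P_i$ with $\Ext_\Lambda^1(P_i,P_i)=0$ is a \emph{brick}, i.e.\ $\End_\Lambda(P_i)$ is a division ring; this is the standard fact that over a hereditary algebra the indecomposable rigid modules are exactly the exceptional ones. Suppose for contradiction that $Q_P$ had an oriented cycle, and choose one, $P_{a_1}\xrightarrow{f_1}\cdots\xrightarrow{f_{m-1}}P_{a_m}\xrightarrow{f_m}P_{a_1}$ with $m\geq 2$, of minimal length and with all $f_i\neq 0$. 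Composing around the cycle gives an endomorphism $f_m\cdots f_1$ of the brick $P_{a_1}$, which is therefore either $0$ or invertible; invertibility would make $f_1$ a split monomorphism, forcing $P_{a_1}\cong P_{a_2}$ and contradicting distinctness, so the composite is $0$. Converting this vanishing, together with the rigidity $\Ext_\Lambda^1(P_{a_k},P_{a_l})=0$ and the hereditary long exact sequences, into a contradiction is the main obstacle. The cleanest way to package it is as the assertion that the endomorphism algebra $\End_\Lambda(P)$ of a partial tilting module over a hereditary algebra is \emph{directed} (its quiver has no oriented cycles), equivalently that exceptional modules over hereditary algebras are directing.

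An attractive alternative, which sidesteps the explicit cycle analysis, is to induct on the number of simples of $\Lambda$ using perpendicular categories in the sense of Geigle--Lenzing and Schofield. Given the rigid module $P$, one locates an indecomposable summand $P_i$ fit to be the \emph{last} term, that is, with $\Hom_\Lambda(P_i,P_j)=0$ for every other summand $P_j$ (a sink of $Q_P$); then every remaining summand lies in the right perpendicular category $P_i^{\perp}$, which is a wide subcategory equivalent to $\mod\Lambda'$ for a hereditary artin algebra $\Lambda'$ with one fewer simple. The remaining summands form a rigid module there, so the inductive hypothesis orders them into an exceptional sequence, and appending $P_i$ (whose perpendicularity conditions hold by construction) yields an exceptional sequence in $\mod\Lambda$, since the equivalence $P_i^{\perp}\simeq\mod\Lambda'$ preserves $\Hom$ and $\Ext^1$. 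The content that must still be supplied is exactly that such a sink always exists for a rigid module, which is again the acyclicity statement. Once this is in hand the conclusion is immediate: a topological order $P_{\sigma(1)},\dots,P_{\sigma(r)}$ satisfies $\Hom_\Lambda(P_{\sigma(j)},P_{\sigma(i)})=0$ for $i<j$ by construction, while $\Ext^1$ vanishes for all pairs by rigidity and each $P_{\sigma(i)}$ is indecomposable rigid, so $(P_{\sigma(1)},\dots,P_{\sigma(r)})$ is the desired exceptional sequence.
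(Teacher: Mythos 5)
The paper itself gives no argument for this lemma: it is quoted directly from Happel--Ringel, where it is \cite[Corollary 4.2]{HR}, so you are in effect attempting to reprove the cited result. Your reduction is correct and is the right way to set things up: the $\Ext^1$-conditions in the definition of an exceptional sequence hold for \emph{every} pair of summands because $P$ is rigid, each $P_i$ is indecomposable rigid (hence a brick over a hereditary algebra), and the desired ordering exists if and only if the directed graph recording nonzero $\Hom$-spaces between the pairwise non-isomorphic summands is acyclic.

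However, you never prove that acyclicity, and you say so yourself (``converting this vanishing \dots\ into a contradiction is the main obstacle''; the perpendicular-category variant likewise still needs the existence of a sink, which you correctly identify as the same statement). Knowing that the composite around a cycle is zero is not by itself a contradiction, so the argument is incomplete at exactly its essential point. The missing ingredient is the Happel--Ringel lemma \cite[Lemma 4.1]{HR}: over a hereditary algebra, any nonzero map $f\colon X\to Y$ between indecomposables with $\Ext_\Lambda^1(Y,X)=0$ is a monomorphism or an epimorphism. With this, acyclicity follows by a short case analysis. First, there are no $2$-cycles: if $f\colon X\to Y$ and $g\colon Y\to X$ are nonzero with $X\not\cong Y$, then $gf$ and $fg$ are non-invertible endomorphisms of bricks, hence zero; but in each of the four mono/epi configurations for $(f,g)$ one of $gf$, $fg$ is visibly nonzero (a composite of monos is a nonzero mono, a composite of epis is a nonzero epi, and if $f$ is mono and $g$ is epi then $\im(fg)=f(X)\neq 0$, symmetrically in the remaining case). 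Next, take a cycle of minimal length $t\geq 3$; minimality forbids chords, so $\Hom_\Lambda(P_{a_i},P_{a_{i+2}})=0$ and hence $f_{i+1}f_i=0$ for all $i$, and the same case analysis forces each $f_i$ to be mono (from the pair $(f_i,f_{i+1})$) and epi (from the pair $(f_{i-1},f_i)$), hence an isomorphism, contradicting $P_{a_i}\not\cong P_{a_{i+1}}$. You should either supply this argument, including a proof or citation of the mono-or-epi dichotomy, or simply cite \cite[Corollary 4.2]{HR} as the paper does.
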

\begin{lemma}[{\cite[Lemma 5]{CB}, \cite[Proposition 1]{ringel}, \cite[Theorem A.4]{HK}}]\label{lem:exclem2}
  Let $X = (X_1,\dots,X_r)$ be an exceptional sequence in $\mod\Lambda$. Then $\la X \ra_{\wide} = {}^\perp(X^\perp) = ({}^\perp X)^\perp$ holds, and $\la X \ra_{\wide}$ is equivalent to $\mod\Gamma$ such that $\Gamma$ is a hereditary artin algebra with $|\Gamma| = r$.
\end{lemma}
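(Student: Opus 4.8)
The plan is to reduce everything to the perpendicular category theorem for a single exceptional object and then induct on the length $r$. The engine is the Geigle--Lenzing/Schofield result (the substance of \cite{GL}, \cite{CB}, \cite{ringel}): if $E$ is an indecomposable rigid $\Lambda$-module, then $E^\perp$ and ${}^\perp E$ are wide subcategories, each equivalent to $\mod\Lambda'$ for a hereditary artin algebra $\Lambda'$ with $|\Lambda'| = |\Lambda| - 1$. That these perpendicular categories are wide is elementary over a hereditary algebra: for a map $f$ between objects of $E^\perp$ one applies $\Hom_\Lambda(E,-)$ to the short exact sequences defining $\ker f$, $\im f$, $\coker f$ and uses $\Ext_\Lambda^{\geq 2}(E,-) = 0$, while extension-closedness is immediate from the long exact sequence. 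The genuinely hard input---that $E^\perp$ is a hereditary module category with exactly one fewer simple---I take as given from the references.

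The two displayed equalities reduce to one. The inclusions $\la X\ra_{\wide} \subseteq {}^\perp(X^\perp)$ and $\la X\ra_{\wide} \subseteq ({}^\perp X)^\perp$ are formal: each $X_i$ lies in ${}^\perp(X^\perp)$ and in $({}^\perp X)^\perp$ straight from the definitions, and both double-perpendicular categories are wide (a one-sided perpendicular category is always wide), so each contains the smallest wide subcategory $\la X\ra_{\wide}$. Passing to $\Lambda^{\op}$ interchanges ${}^\perp(-)$ and $(-)^\perp$ and reverses the exceptional sequence, so $({}^\perp X)^\perp = \la X\ra_{\wide}$ follows from ${}^\perp(X^\perp) = \la X\ra_{\wide}$; it therefore suffices to prove the latter together with the module-category claim.

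I would run the induction by peeling off $X_1$. Since $\Hom_\Lambda(X_j, X_1) = \Ext_\Lambda^1(X_j, X_1) = 0$ for all $j > 1$, the objects $X_2, \dots, X_r$ lie in ${}^\perp X_1 \equi \mod\Lambda'$ with $|\Lambda'| = |\Lambda| - 1$. As ${}^\perp X_1$ is an exact extension-closed subcategory, $\Hom$ and $\Ext^1$ computed in it agree with those in $\mod\Lambda$, so $(X_2,\dots,X_r)$ is an exceptional sequence in $\mod\Lambda'$; by the inductive hypothesis the wide subcategory $\WW'$ it generates is equivalent to $\mod\Gamma'$ with $|\Gamma'| = r-1$ and equals its double perpendicular inside ${}^\perp X_1$. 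A wide subcategory of a wide subcategory is wide, so $\WW'$ is wide in $\mod\Lambda$ and $\la X\ra_{\wide} = \la X_1, \WW'\ra_{\wide}$, where $\Hom_\Lambda(\WW', X_1) = 0 = \Ext_\Lambda^1(\WW', X_1)$ because $\WW' \subseteq {}^\perp X_1$.

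For the count and the remaining inclusion I would use the orthogonal decomposition directly. Iterating the single-object theorem along $X_r, X_{r-1}, \dots$ (each $X_i$ with $i<r$ lying in $X_r^\perp$, and so on) shows $X^\perp = X_1^\perp \cap \dots \cap X_r^\perp$ is equivalent to a hereditary module category with $n - r$ simples, where $n = |\Lambda|$. The perpendicular machinery of \cite{CB}, \cite{ringel}, \cite{HK} then exhibits $\mod\Lambda$ as glued from the two orthogonal wide subcategories ${}^\perp(X^\perp)$ and $X^\perp$, which simultaneously forces ${}^\perp(X^\perp) \subseteq \la X\ra_{\wide}$ (so equality holds) and makes the Grothendieck group rank additive, giving $\rank K_0(\la X\ra_{\wide}) = n - (n-r) = r$; the gluing moreover presents $\la X\ra_{\wide}$ as a hereditary module category, consistent with the characterization in Proposition \ref{prop:ffwide}, so $\la X\ra_{\wide}\equi\mod\Gamma$ with $|\Gamma|=r$. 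I expect the main obstacle to be exactly this reassembly: verifying that adjoining the single exceptional object $X_1$ on the orthogonal side of $\WW' \equi \mod\Gamma'$ again yields a hereditary module category with one more simple, and identifying the result with ${}^\perp(X^\perp)$. This is the heart of the braid-group/perpendicular-category theory, and in the paper the lemma is simply quoted from \cite{CB}, \cite{ringel}, and \cite{HK}.
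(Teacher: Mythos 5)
The paper offers no proof of this lemma---it is imported verbatim from \cite{CB}, \cite{ringel} and \cite{HK}---so the only question is whether your sketch would stand on its own. The formal parts are correct and match the standard strategy of those references: each $X_i$ lies in ${}^\perp(X^\perp)$ and in $({}^\perp X)^\perp$ by definition, both double perpendiculars are wide because $\Lambda$ is hereditary (so $\Ext^2$ vanishes), hence both contain $\la X\ra_{\wide}$; the duality with $\mod\Lambda^{\op}$ reverses the sequence and swaps the two perpendiculars, so one equality suffices; and since $X_2,\dots,X_r\in{}^\perp X_1$ (resp.\ $X_1,\dots,X_{r-1}\in X_r^\perp$), the one-object Geigle--Lenzing theorem sets up a legitimate induction showing $X^\perp$ is equivalent to $\mod$ of a hereditary artin algebra with $|\Lambda|-r$ simples.

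The genuine gap is the step you yourself flag: the reverse inclusion ${}^\perp(X^\perp)\subseteq\la X\ra_{\wide}$ together with the identification of $\la X\ra_{\wide}$ as a hereditary module category of rank $r$. Your proposed mechanism---that the perpendicular machinery ``exhibits $\mod\Lambda$ as glued from ${}^\perp(X^\perp)$ and $X^\perp$, which simultaneously forces'' the inclusion and the additivity of $K_0$-ranks---is not a valid deduction as stated. The subcategory ${}^\perp(X^\perp)$ is not Serre, so the recollement formalism (the one used in Lemma \ref{ice:lem:removing} for $\add S(v)$) does not produce such a decomposition; and even granting that $\mod\Lambda$ is somehow built from the two orthogonal pieces, that says nothing about whether ${}^\perp(X^\perp)$ is generated by the $X_i$ under kernels, cokernels and extensions, which is exactly the assertion being proved. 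What actually closes the argument in \cite{GL} and \cite{CB} is the existence of a left adjoint to the inclusion $X^\perp\hookrightarrow\mod\Lambda$ and the resulting functorial approximation sequences, which exhibit every object of ${}^\perp(X^\perp)$ as built from the $X_i$; equivalently, one completes $(X_1,\dots,X_r)$ to a full exceptional sequence so that $X^\perp$ is itself generated by the complementary exceptional objects. Either route is the heart of the perpendicular-category theory, so your plan is an outline of where the proof lives rather than a proof; since the paper itself only cites the result this is defensible, but the ``gluing forces it'' sentence should not be presented as an argument. A smaller point: Proposition \ref{prop:ffwide} yields $\la X\ra_{\wide}\equi\mod\Gamma$ only for an artin algebra $\Gamma$ and only once functorial finiteness is established; the heredity of $\Gamma$ needs the separate (easy) remark that a wide subcategory of a hereditary abelian category has no $\Ext^2$.
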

Now we can prove Proposition \ref{prop:excmain}.
\begin{proof}[Proof of Proposition \ref{prop:excmain}]
  Let $P$ be a rigid $\Lambda$-module. We may assume that $P$ is basic without loss of generality. Then Lemma \ref{lem:exclem1}, there is an exceptional sequence $(P_1, \dots, P_r)$ in $\mod\Lambda$ with $P = P_1 \oplus \cdots \oplus P_r$.

  Put $\WW := \la P \ra_{\wide}$. Then by Lemma \ref{lem:exclem2}, there are a hereditary artin algebra $\Gamma$ with $|\Gamma| = r$ and an equivalence $F \colon \WW \equi \mod\Gamma$.
  Consider $FP \in \mod \Gamma$. Since $\WW$ is extension-closed, we have $\Ext_\WW^1(P,P) = 0$, which implies $\Ext_\Gamma^1(FP,FP) = 0$. Thus $FP$ is partial tilting since $\Gamma$ is hereditary.
  Moreover, $|FP| = |P| = r = |\Gamma|$ holds, hence $FP$ must be a tilting $\Gamma$-module by the Bongartz completion argument, see e.g. \cite[Corollary VI.4.4]{ASS}.
\end{proof}

\begin{appendix}

\section{Enumerative results}
\emph{Throughout this appendix, we denote by $k$ a field}.
In this appendix, we give an explicit formula of the number $\# \rigid^i(kQ)$ of rigid $kQ$-modules with $i$ non-isomorphic direct summands for a Dynkin quiver $Q$.

In \cite[Proposition 6.1]{MRZ}, it was shown that this number does not depend on the orientation of $Q$, but the proof therein relies heavily on cluster combinatorics. In the first subsection, we give a short homological proof of this fact.
In the second subsection, we give an explicit formula of $\# \rigid^i(kQ)$ by using the enumerative result on cluster complexes in \cite{ftri}.

Let us introduce the set which we want to enumerate in this appendix.
\begin{definition}
  Let $\Lambda$ be an artin algebra. For a non-negative integer $i$, we denote by $\rigid^i \Lambda$ the set of isomorphism classes of basic $\Lambda$-modules $U$ satisfying $|U| = i$.
\end{definition}
Note that if $\Lambda$ is hereditary, then $\rigid^i \Lambda = \varnothing$ unless $0 \leq i \leq |\Lambda|$ by considering the Bongartz completion.

\subsection{Invariance under sink mutation}
Let us begin with recalling the \emph{mutation} of a quiver at a sink.
\begin{definition}
  Let $Q$ be a quiver. A \emph{sink} of $Q$ is a vertex $v$ such that there is no arrow starting at $v$. For a sink $v$ of $Q$, the \emph{sink mutation $\mu_v Q$} is a new quiver obtained from $Q$ by reversing all arrows which end at $v$.
\end{definition}
Note that $v$ becomes a source in $\mu_v Q$.
It is well-known that if two quivers $Q$ and $Q'$ have the same underlying graph which is a tree, then there is a sequence of sink mutations which transforms $Q$ into $Q'$.

Now the following is the main result in this subsection.
\begin{theorem}\label{ice:thm:mutinv}
  Let $Q$ be an acylic quiver with $n$ vertices and $v$ a sink of $Q$. Then for any $0 \leq i \leq n$, there is a bijection between $\rigid^i(kQ)$ and $\rigid^i(k (\mu_vQ))$.
\end{theorem}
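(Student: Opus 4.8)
The plan is to peel off the exceptional simple at $v$ and treat separately the rigid modules that do, and do not, have it as a summand, using the Bernstein--Gelfand--Ponomarev reflection functor for the bulk and perpendicular categories for the exceptional part. Write $S_v$ for the simple at the sink $v$, which is projective in $\mod kQ$; the same simple, which I denote $I_v$, is injective in $\mod k(\mu_v Q)$ since $v$ becomes a source. Let $S_v^+\colon \mod kQ \to \mod k(\mu_v Q)$ be the reflection functor. It is the module-level shadow of a derived (tilting) equivalence, so it restricts to a bijection between the indecomposables $\ne S_v$ and the indecomposables $\ne I_v$, preserves the number of indecomposable summands, and preserves $\Hom$ and $\Ext^1$ between modules with no $S_v$-summand. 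In particular it induces a bijection
\[
\{U \in \rigid^i(kQ) : S_v \nmid U\} \xrightarrow{\ \sim\ } \{V \in \rigid^i(k\mu_v Q) : I_v \nmid V\},
\]
which disposes of all rigid modules not involving the exceptional simple.

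Next I would handle the rigid modules containing the exceptional simple. Since $S_v$ is projective, $U'\oplus S_v$ is rigid iff $U'$ is rigid and $\Ext^1(U',S_v)=0$; dually, since $I_v$ is injective, $V'\oplus I_v$ is rigid iff $V'$ is rigid and $\Ext^1(I_v,V')=0$, and $I_v\nmid V'$ automatically forces $\Hom(I_v,V')=0$ (a nonzero map $I_v\to V'$ would be a split mono). Using the reflection functor's compatibility with the derived equivalence one checks $\Ext^1_{kQ}(U',S_v)\cong \Hom_{k\mu_vQ}(S_v^+U',I_v)$, so $S_v^+$ carries $\{U' \text{ rigid}: S_v\nmid U',\ \Ext^1(U',S_v)=0\}$ bijectively onto the rigid modules of the left perpendicular category ${}^\perp I_v$, while the rigid modules containing $I_v$ on the target side correspond to the rigid modules of the right perpendicular category $I_v^\perp$. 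By the theory of perpendicular categories \cite{GL}, both ${}^\perp I_v$ and $I_v^\perp$ are wide subcategories equivalent to $\mod\Gamma$ for hereditary algebras of rank $n-1$.

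The crux is thus to produce, for each $j$, a bijection $\rigid^j({}^\perp I_v)\cong \rigid^j(I_v^\perp)$. The key observation is that these two hereditary algebras share the same underlying graph: their Grothendieck groups are the hyperplanes $\{x:\langle x,[I_v]\rangle=0\}$ and $\{x:\langle [I_v],x\rangle=0\}$ in $K_0(kQ)$, and the reflection $s_{[I_v]}(x)=x-(x,[I_v])[I_v]$ in the symmetrized Euler form is an isometry carrying the first onto the second; since the underlying graph is read off from the symmetrized Euler form, the two perpendicular algebras carry it equally. For $Q$ of Dynkin type the graph is a tree, so I would close the argument by induction on $n$: the inductive hypothesis gives mutation-invariance in rank $n-1$, hence $\#\rigid^j$ there depends only on the underlying graph, producing the required bijection. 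Reassembling the two cases and tracking that the number of summands (and the index $i$) is preserved throughout then yields $\rigid^i(kQ)\cong\rigid^i(k\mu_vQ)$.

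I expect the main obstacle to be precisely this perpendicular-category comparison. The reflection-functor reduction is routine, but matching the rigid modules of the left and right perpendicular categories of $I_v$ is delicate, since these are genuinely different abelian categories; the same-underlying-graph observation combined with the induction is what makes it succeed in the Dynkin (tree) case, whereas a direct treatment—for instance via the reflection autoequivalence attached to the exceptional object $I_v$—would be needed to reach an arbitrary acyclic $Q$.
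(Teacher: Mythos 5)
Your first half---splitting $\rigid^i$ according to whether the simple at $v$ occurs as a summand, and disposing of the $S(v)$-free part via the reflection functor---is exactly the paper's argument (Corollary \ref{ice:cor:1stbij}), and your homological bookkeeping for which $U'$ make $U'\oplus S(v)$ rigid is correct. The gap is in how you handle the exceptional part. The paper's key point (Lemma \ref{ice:lem:removing}) is that the relevant left and right perpendicular categories of the simple at $v$ are \emph{both equivalent, as abelian categories, to $\mod kQ_v$} for the quiver $Q_v$ obtained by deleting $v$: via the recollement attached to the idempotent $e=1-e_v$, each is the essential image of a fully faithful adjoint of $(-)e$, and $e(kQ)e\iso kQ_v\iso e(k(\mu_vQ))e$ because $Q$ and $\mu_vQ$ have the same arrows away from $v$. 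This gives the bijection on the exceptional part directly (Proposition \ref{ice:prop:red}, Corollary \ref{ice:cor:2ndbij}), for \emph{every} acyclic $Q$ and with no induction; in particular it resolves what you call the crux, namely matching $\rigid^j({}^\perp I_v)$ with $\rigid^j(I_v^\perp)$.

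Your substitute for this step does not prove the theorem as stated. First, the statement is for an arbitrary acyclic quiver, while your argument only closes in the Dynkin/tree case, as you concede at the end. Second, even in that case the induction rests on an unjustified identification: an isometry $s_{[I_v]}$ between the Euler sublattices of ${}^\perp I_v$ and $I_v^\perp$ does not by itself show that the two perpendicular algebras have the same underlying graph, because that graph is read off from the symmetrized Euler form evaluated on the classes of the \emph{simple objects} of each perpendicular category, and an abstract lattice isometry need not match these up. (In Dynkin type one could try to patch this by recovering the root system from the quadratic form, but that is additional work you have not done.) Third, ``mutation-invariance in rank $n-1$'' only compares different orientations of a single tree connected by sink mutations, so you would further need both perpendicular algebras to be presented as path algebras of quivers on that common tree. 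All of this is bypassed by the observation that both categories are equivalent to $\mod kQ_v$ on the nose.
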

This result immediately yields the following corollary, since any two Dynkin quivers with the same underlying graph can be connected via a series of sink mutations.
\begin{corollary}[{c.f. \cite[Proposition 6.1]{MRZ}}]\label{ice:cor:depend}
  Let $Q$ be a Dynkin quiver. Then $\#\rigid^i(kQ)$ only depends on the underlying Dynkin graph, not on the choice of an orientation.
\end{corollary}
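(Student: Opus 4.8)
The plan is to realize the bijection through the Bernstein--Gelfand--Ponomarev reflection functor at the sink $v$, reducing the comparison to the deleted quiver $Q\setminus v$ by means of perpendicular categories. Write $S$ for the simple $kQ$-module at $v$ and $S'$ for the simple $k(\mu_v Q)$-module at $v$; since $v$ is a sink of $Q$ the module $S$ is projective, and since $v$ is a source of $\mu_v Q$ the module $S'$ is injective. Let $R\colon \mod kQ\to\mod k(\mu_v Q)$ be the reflection functor (the APR-tilting functor at $S$). I will use the standard facts that $R$ restricts to an equivalence between the subcategory of modules with no summand $S$ and the subcategory of modules with no summand $S'$, preserving $\Hom$ and $\Ext^1$ --- hence rigidity and the number of indecomposable summands --- and that $R$ is the restriction of a triangle equivalence $D^b(kQ)\equi D^b(k(\mu_v Q))$ carrying $S$ to a shift of $S'$, so that $\thick(S)$ goes to $\thick(S')$.

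First I would split $\rigid^i$ on both sides according to whether the simple at $v$ occurs as a summand. On the $kQ$-side, a basic rigid module is either free of $S$, or has the form $S\oplus U'$ with $U'$ rigid and $S\notin\add U'$; since $S$ is projective, rigidity of $S\oplus U'$ is equivalent to $\Ext^1(U',S)=0$, while any epimorphism onto the projective $S$ splits, so $S\notin\add U'$ already forces $\Hom(U',S)=0$. Thus the summands $U'$ range precisely over the rigid objects of the left perpendicular category ${}^{\perp}S$. Dually, using that $S'$ is injective, the basic rigid $k(\mu_v Q)$-modules containing $S'$ are the $S'\oplus W'$ with $W'$ ranging over the rigid objects of $(S')^{\perp}$. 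Hence it remains to give count-preserving bijections between the $S$-free rigids and the $S'$-free rigids (which is exactly $R$), and between the rigid objects of ${}^{\perp}S$ and those of $(S')^{\perp}$.

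For the second bijection I would pass through $Q\setminus v$. A direct check gives ${}^{\perp}S'=\{X: X_v=0\}$ (as $S'$ is the injective simple at the source $v$) and $(S)^{\perp}=\{X: X_v=0\}$ (as $S$ is the projective simple at the sink $v$), and both of these Serre subcategories are $\mod k(Q\setminus v)$, using $\mu_v Q\setminus v=Q\setminus v$. Since $R$ carries $\thick(S)$ to $\thick(S')$, it maps left (resp.\ right) perpendicular categories to left (resp.\ right) perpendicular categories, giving equivalences ${}^{\perp}S\equi{}^{\perp}S'$ and $(S)^{\perp}\equi(S')^{\perp}$. Combining these, the rigid objects of ${}^{\perp}S$ and of $(S')^{\perp}$ are both in count-preserving bijection with $\rigid^{i-1}(k(Q\setminus v))$, which completes the argument.

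The main obstacle is the careful handling of $R$ on the perpendicular categories: one must verify that every object of ${}^{\perp}S$ and of $(S)^{\perp}$ lies in the torsion class $\{M:\Ext^1(T,M)=0\}$ of the tilting module $T$ defining $R$, so that $R$ sends it to an honest module rather than a shifted complex, and that the resulting functor then lands isomorphically onto ${}^{\perp}S'$, resp.\ $(S')^{\perp}$. This uses precisely that $S$ is projective, so that objects of ${}^{\perp}S$ and of $(S)^{\perp}$ admit no maps to $S$ and hence have vanishing torsion-free part. Granting this, reassembling the two bijections yields $\#\rigid^i(kQ)=\#\rigid^i(k(\mu_v Q))$ for every $i$.
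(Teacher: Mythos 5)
Your proposal is correct and follows the same overall strategy as the paper's proof of Theorem \ref{ice:thm:mutinv}: decompose $\rigid^i$ according to whether the simple at $v$ occurs as a direct summand, treat the simple-free part via the BGP reflection functor, and treat the part containing the simple via a perpendicular category, reducing to the deleted quiver; your characterization of the rigid modules containing $S$ as $S\oplus U'$ with $U'$ rigid in ${}^\perp S$ is exactly the Claim in the paper's Proposition \ref{ice:prop:red}. The one point where you genuinely diverge is in identifying the relevant perpendicular categories with $\mod k(Q\setminus v)$. The paper does this intrinsically on each side of the mutation, showing via a recollement attached to the idempotent $e=1-e_v$ that \emph{both} ${}^\perp S(v)$ and $S(v)^\perp$ are equivalent to $\mod kQ_v$ (Lemma \ref{ice:lem:removing}), so the reduction $\rigid^i_{\la v\ra}(kQ)\iso\rigid^{i-1}(kQ_v)$ is proved once for a sink or a source and then applied to $Q$ and to $\mu_vQ$ separately. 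You instead compute only the ``easy'' perpendicular categories $(S)^\perp$ and ${}^\perp S'$ directly as $\{X\mid X_v=0\}$ and transport ${}^\perp S\equi{}^\perp S'$ and $(S')^\perp\equi(S)^\perp$ along the derived equivalence underlying $R$. This avoids recollement machinery but concentrates the work exactly where you flag it: since the derived equivalence carries $S$ to a \emph{shift} of $S'$, one must check that it nevertheless restricts to an exact equivalence of the abelian perpendicular categories. Your observations that objects of ${}^\perp S$ and $(S)^\perp$ admit no nonzero map to the simple projective $S$ (hence lie in the torsion class of the APR tilt and are sent to honest modules), and that the injectivity of $S'$ makes $\Ext^1(-,S')$ vanish so that the shift is absorbed ($\Ext^1(X,S)=0$ translating into $\Hom(RX,S')=0$), do close this gap. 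Both arguments are sound; the paper's is slightly more modular, yours is more self-contained on the tilting-theoretic side.
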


In the rest of this subsection, we will give a proof of Theorem \ref{ice:thm:mutinv}.

\begin{definition}
  Let $Q$ be an acylic quiver and $v$ a sink or a source of $Q$.
  \begin{enumerate}
    \item We denote by $\mod_v kQ$ the subcategory consisting of $kQ$-modules which do \emph{not} contain $S(v)$ as a direct summand, where $S(v)$ is the simple module corresponding to $v$.
    \item For a $kQ$-module $M$, we denote by $M_v$ a unique module in $\mod_v kQ$ such that there is a following decomposition for some $n \geq 0$.
    \[
    M \iso M_v \oplus S(v)^{\oplus n}
    \]
    \item We define $\rigid^i_v (kQ):= \rigid^i (kQ) \cap \mod_v (kQ)$, that is, the set of basic rigid $kQ$-modules $M$ with $|M| = i$ such that $M$ \emph{does not} contain $S(v)$ as an direct summand.
    \item We define $\rigid^i_{\la v\ra} (kQ):= \rigid^i(kQ) \setminus \rigid^i_v(kQ)$, that is, the set of basic rigid $kQ$-modules $M$ with $|M| = i$ such that $M$ contains $S(v)$ as an direct summand.
  \end{enumerate}
\end{definition}
By definition, we have $\rigid^i(kQ) = \rigid_v^i(kQ) \sqcup \rigid_{\la v \ra}^i(kQ)$.
Our strategy is to construct two bijections $\rigid^i_v(kQ) \iso \rigid^i_v(k(\mu_v Q))$ and $\rigid^i_{\la v \ra} (kQ) \iso \rigid^i_{\la v \ra} (k (\mu_v Q))$ separately. The first one is established by the \emph{reflection functor}, and second one by the \emph{perpendicular categories}.

First, we will use the following property of the classical BGP-reflection functor. For the proof, we refer the reader to standard textbooks on quiver representation theory such as \cite{ASS}.
\begin{proposition}
  Let $Q$ be an acylic quiver and $v$ a sink of $Q$. There is a functor $R_v \colon \mod kQ \to \mod k(\mu_v Q)$ called the \emph{reflection functor}, which induces an equivalence $\mod_v kQ \equi \mod_v k (\mu_v Q)$. Moreover, this functor induces an isomorphism $\Ext_{kQ}^1(X,Y) \iso \Ext_{k(\mu_v Q)}^1(R_v X,R_v Y)$ for every $X,Y \in \mod_v kQ$.
\end{proposition}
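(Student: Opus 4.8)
The plan is to construct the functor by hand and verify everything by explicit, largely diagrammatic, computations, since the statement is entirely classical. Recall that $v$ being a sink means every arrow incident to $v$ points into $v$. For a representation $M = (M_w, M_\alpha)$ of $Q$, I would set $(R_v M)_w = M_w$ for $w \neq v$, and at $v$ put $(R_v M)_v := \ker(\phi_M)$, where $\phi_M \colon \bigoplus_{\alpha\colon w \to v} M_w \to M_v$ is assembled from the structure maps $M_\alpha$; the reversed arrows $v \to w$ of $\mu_v Q$ act through the canonical composites $\ker(\phi_M) \hookrightarrow \bigoplus_{\alpha\colon w\to v} M_w \twoheadrightarrow M_w$. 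Dually, at the source $v$ of $\mu_v Q$ I would define $R_v^-$ using the cokernel of $N_v \to \bigoplus_{v \to w} N_w$. The first task is to check functoriality and that $R_v$ lands in $\mod_v k(\mu_v Q)$: the defining injection $\ker(\phi_M) \hookrightarrow \bigoplus_{\alpha\colon w\to v} M_w$ makes the relevant source-map injective, so $R_v M$ has no $S(v)$-summand.

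The key dictionary, which I would isolate as a lemma, is that for $M \in \mod kQ$ one has $M \iso N \oplus S(v)^{\oplus c}$ with $N \in \mod_v kQ$ and $c = \dim\coker(\phi_M)$; in particular $M \in \mod_v kQ$ if and only if $\phi_M$ is surjective. This holds because $v$ is a sink, so any subspace of $M_v$ extends to a subrepresentation, and a vector-space complement of $\im\phi_M$ splits off as a copy of $S(v)^{\oplus c}$. Granting this, for $M \in \mod_v kQ$ the sequence $0 \to (R_v M)_v \to \bigoplus_{\alpha\colon w\to v} M_w \xrightarrow{\phi_M} M_v \to 0$ is short exact, and a direct round-trip computation gives $R_v^- R_v M \iso M$ and, dually, $R_v R_v^- N \iso N$, yielding the equivalence $\mod_v kQ \equi \mod_v k(\mu_v Q)$.

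For the Ext-isomorphism I would first observe that $\mod_v kQ$ is closed under extensions: since $v$ is a sink, $S(v)$ is injective, so in a short exact sequence $0 \to A \to B \to C \to 0$ with $A, C \in \mod_v kQ$ a summand $S(v)$ of $B$ would map into $C$ injectively (then split off $C$) or into $A$ (then split off $A$), either way contradicting $A, C \in \mod_v kQ$. Hence the ambient $\Ext_{kQ}^1(X,Y)$ for $X, Y \in \mod_v kQ$ coincides with the Yoneda $\Ext^1$ computed by extensions whose middle term again lies in $\mod_v kQ$. I would then show $R_v$ preserves such short exact sequences: arranging the three defining sequences for $A$, $B$, $C$ as the columns of a commutative diagram, the middle row $0 \to \bigoplus_{\alpha} A_w \to \bigoplus_\alpha B_w \to \bigoplus_\alpha C_w \to 0$ is exact (a direct sum of the exact vertex sequences) and the bottom row $0 \to A_v \to B_v \to C_v \to 0$ is exact, so the nine lemma forces the top row $0 \to (R_v A)_v \to (R_v B)_v \to (R_v C)_v \to 0$ to be exact. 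Thus $R_v$ carries conflations in $\mod_v kQ$ to conflations in $\mod_v k(\mu_v Q)$, and since $R_v^-$ does the reverse, this exact equivalence induces a bijection on Yoneda $\Ext^1$, which is exactly $\Ext_{kQ}^1(X,Y) \iso \Ext_{k(\mu_v Q)}^1(R_v X, R_v Y)$.

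The main obstacle is the exactness step: $R_v$ is only left exact on all of $\mod kQ$ (it is built from a kernel), so the argument genuinely needs the restriction to $\mod_v kQ$, where surjectivity of $\phi_M$ makes the defining columns short exact and lets the nine lemma apply; the functoriality and round-trip identities must also be checked carefully at the vertex $v$. A more conceptual alternative avoids these diagram chases: $R_v$ is the module-level shadow of the APR-tilting derived equivalence $\mathrm{D}^{b}(kQ) \equi \mathrm{D}^{b}(k(\mu_v Q))$, under which $\Ext^1$ groups are $\Hom$ groups shifted by one in the derived category, so once one checks that objects of $\mod_v kQ$ remain concentrated in degree zero the Ext-isomorphism is immediate. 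I would give the elementary argument as the main proof and record the derived-equivalence viewpoint as a remark.
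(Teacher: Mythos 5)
The paper does not prove this proposition at all --- it explicitly defers to standard textbooks such as \cite{ASS} --- so your self-contained reconstruction of the classical BGP argument is necessarily a different route. The construction of $R_v$ via $\ker(\phi_M)$, the dictionary $M \iso N \oplus S(v)^{\oplus c}$ with $N \in \mod_v kQ$ and $c = \dim\coker(\phi_M)$ (so $M \in \mod_v kQ$ iff $\phi_M$ is surjective), the round-trip identities giving the equivalence, and the nine-lemma argument showing that $R_v$ carries conflations of $\mod_v kQ$ to conflations of $\mod_v k(\mu_v Q)$ are all correct and together constitute the standard proof; the APR-tilting remark at the end is also accurate. What your approach buys over the paper's citation is an explicit verification that the $\Ext^1$-isomorphism really only requires the restricted categories to be extension-closed and exactly equivalent, which is the point actually used later in the appendix.

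There is, however, one incorrect step that needs repair: your justification that $\mod_v kQ$ is closed under extensions rests on the claim that $S(v)$ is injective because $v$ is a sink. In the convention you yourself set up (structure maps along the arrows, so $\phi_M$ maps \emph{into} $M_v$) --- and in the paper's own convention, used in the proof of Proposition \ref{ice:prop:red} --- the simple at a sink is \emph{projective}, not injective; for $1 \to 2$ the inclusion $S(2) \hookrightarrow P(1)$ does not split. Hence the step ``maps into $C$ injectively, then splits off $C$'' fails as written. The fact you need is still true and can be recovered in either of two ways: run your argument with projectivity instead --- if $S(v)$ is a summand of $B$ with retraction $\pi \colon B \to S(v)$, then $\pi|_A$ is either surjective (and splits by projectivity, so $S(v)$ is a summand of $A$) or zero (so $\pi$ factors through a surjection $C \defl S(v)$, which splits, so $S(v)$ is a summand of $C$), a contradiction either way --- or, more in the spirit of the rest of your proof, apply the snake lemma to the morphism of short exact sequences given by $\phi_A$, $\phi_B$, $\phi_C$, which yields an exact sequence $\coker\phi_A \to \coker\phi_B \to \coker\phi_C \to 0$ and shows that surjectivity of $\phi_A$ and $\phi_C$ forces surjectivity of $\phi_B$. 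With that one repair the proof is complete.
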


This immediately yields the following bijection.
\begin{corollary}\label{ice:cor:1stbij}
  Let $Q$ be an acyclic quiver and $v$ a sink of $Q$. Then we have an bijection $R_v \colon \rigid_v^i(kQ) \xrightarrow{\sim} \rigid_v^i(k (\mu_v Q))$ given by $U \mapsto R_v U$.
\end{corollary}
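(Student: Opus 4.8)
The plan is to obtain the corollary as an immediate consequence of the preceding Proposition, which supplies the equivalence $R_v \colon \mod_v kQ \equi \mod_v k(\mu_v Q)$ together with the $\Ext^1$-isomorphism. Since any equivalence is essentially surjective and reflects isomorphisms, it already induces a bijection on isomorphism classes of objects; what remains is to check that this bijection carries the subset $\rigid^i_v(kQ)$ onto $\rigid^i_v(k(\mu_v Q))$. Concretely, I must verify that the three conditions cutting out $\rigid^i_v$ — being basic, having exactly $i$ indecomposable summands with none isomorphic to $S(v)$, and being rigid — are all preserved by $R_v$.

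First I would dispose of the combinatorial conditions. As an equivalence, $R_v$ preserves indecomposability and sends pairwise non-isomorphic objects to pairwise non-isomorphic ones, so a basic module $U = \bigoplus_{j=1}^i U_j$ in $\mod_v kQ$ is sent to a basic module $R_v U$ with $|R_v U| = |U| = i$; moreover $R_v U$ lies in $\mod_v k(\mu_v Q)$ by construction and hence has no summand isomorphic to $S(v)$. For rigidity, I would feed $X = Y = U$ into the $\Ext^1$-isomorphism of the Proposition to get $\Ext^1_{k(\mu_v Q)}(R_v U, R_v U) \iso \Ext^1_{kQ}(U,U)$, so that $U$ is rigid if and only if $R_v U$ is. Together these show that $R_v$ restricts to a well-defined map $\rigid^i_v(kQ) \to \rigid^i_v(k(\mu_v Q))$.

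Bijectivity then comes for free: a quasi-inverse of $R_v$ (the reflection at $v$ now regarded as a source of $\mu_v Q$) is itself an equivalence satisfying the same $\Ext^1$-isomorphism, so by the identical argument it restricts to a map in the opposite direction, and the two restricted maps are mutually inverse because $R_v$ and its quasi-inverse are already mutually inverse on isomorphism classes. I do not expect any genuine obstacle here; the whole point is that every condition defining $\rigid^i_v$ is either categorical, hence preserved by the equivalence, or homological, hence preserved by the cited $\Ext^1$-isomorphism, so nothing beyond the Proposition is required.
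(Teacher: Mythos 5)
Your argument is exactly the one the paper intends: the corollary is stated as an immediate consequence of the preceding proposition, and your verification that the equivalence preserves basicness, the number of indecomposable summands, and membership in $\mod_v$, while the $\Ext^1$-isomorphism preserves rigidity, is precisely the routine check the paper leaves implicit. No gaps; this matches the paper's approach.
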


Next we will construct a bijection $\rigid^i_{\la v \ra} (kQ) \iso \rigid^{i-1}(kQ_v)$, where $Q_v$ denotes a quiver obtained by removing $v$ from $Q$ by using perpendicular categories and their relation to (co)localizations. We remark that this lemma was proved in \cite[Lemma 3.2.5]{CK} in a more general setting.
\begin{lemma}\label{ice:lem:removing}
  Let $Q$ be an acylic quiver and $v$ a sink or a source of $Q$. Then both $S(v)^\perp$ and $^\perp S(v)$ are wide subcategories of $\mod kQ$, and equivalent to $\mod k Q_v$.
\end{lemma}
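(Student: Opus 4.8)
The plan is to realise both perpendicular categories through the exact \emph{restriction functor} $\rho\colon \mod kQ \to \mod kQ_v$ that forgets the vertex $v$ together with the arrows incident to it. Note first that $S(v)$ is an exceptional object: it is indecomposable and $\Ext^1_{kQ}(S(v),S(v)) = 0$ because $Q$, being acyclic, has no loop at $v$. Since $kQ$ is hereditary I may use the Ringel form identity $\dim\Hom_{kQ}(M,N) - \dim\Ext^1_{kQ}(M,N) = \langle \udim M, \udim N\rangle$, where $\langle \mathbf d, \mathbf e\rangle = \sum_u d_u e_u - \sum_{a\colon i\to j} d_i e_j$; write $\delta_v := \udim S(v)$ for the coordinate vector at $v$. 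I will show that $\rho$ restricts to equivalences $S(v)^\perp \simeq \mod kQ_v$ and ${}^\perp S(v) \simeq \mod kQ_v$.

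Assume first that $v$ is a sink, and let $a_1,\dots,a_m\colon u_1,\dots,u_m\to v$ be the arrows ending at $v$. As $v$ is a sink, $S(v)$ is projective, so $\Ext^1_{kQ}(S(v),-) = 0$ and $\Hom_{kQ}(S(v),M)\iso M_v$; hence $S(v)^\perp = \{\,M \mid M_v = 0\,\}$. This is a Serre subcategory of $\mod kQ$, in particular wide, and it is precisely the essential image of the exact extension-by-zero functor $\mod kQ_v\hookrightarrow\mod kQ$; thus $\rho$ identifies it with $\mod kQ_v$. The substance of the lemma lies in the other category ${}^\perp S(v)$, which is not closed under submodules.

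For ${}^\perp S(v)$ I would introduce the exact functor $G\colon \mod kQ_v\to\mod kQ$ sending $N$ to the representation that coincides with $N$ on $Q_v$, has $G(N)_v := \bigoplus_i N_{u_i}$, and makes each $a_i$ act by the canonical inclusion into the $i$-th summand, so the assembled map $\bigoplus_i N_{u_i}\to G(N)_v$ is the identity. A short computation gives $\Hom_{kQ}(G(N),S(v)) = 0$, and since $\dim G(N)_v = \sum_i\dim N_{u_i}$ the Ringel form yields $\langle\udim G(N),\delta_v\rangle = 0$, whence $\Ext^1_{kQ}(G(N),S(v)) = 0$ as well; thus $G$ lands in ${}^\perp S(v)$ and $\rho G = \id$. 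Conversely, for $M\in{}^\perp S(v)$ the vanishing of $\Hom_{kQ}(M,S(v))$ says exactly that the assembled map $\bigoplus_i M_{u_i}\to M_v$ is surjective, while $\langle\udim M,\delta_v\rangle = \dim M_v - \sum_i\dim M_{u_i} = 0$ (both $\Hom$ and $\Ext^1$ into $S(v)$ vanish) forces the two dimensions to agree, so that surjection is an isomorphism and $M\iso G(\rho M)$ naturally. Hence $\rho$ and $G$ are quasi-inverse equivalences ${}^\perp S(v)\simeq\mod kQ_v$; as $G$ is exact with essential image ${}^\perp S(v)$, this subcategory is closed under kernels and cokernels, and being a perpendicular category it is closed under extensions, so it is wide.

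The case of a source $v$ is dual: now $S(v)$ is injective, ${}^\perp S(v) = \{\,M\mid M_v = 0\,\}\simeq\mod kQ_v$ is the Serre side, and $S(v)^\perp$ is treated by the dual functor $G'$ which sets $G'(N)_v := \bigoplus_j N_{w_j}$ (indexed by the arrows $b_j\colon v\to w_j$ leaving $v$) and lets each $b_j$ act by the canonical projection; the same Ringel-form argument, with ``surjective'' replaced by ``injective'', gives $S(v)^\perp\simeq\mod kQ_v$. The one genuine obstacle is this non-Serre perpendicular category, and the single decisive idea is the Ringel-form dimension count, which promotes the one-sided surjectivity (resp.\ injectivity) extracted from the vanishing of $\Hom$ to a full isomorphism once $\Ext^1$ also vanishes. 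Wideness could instead be quoted from the Geigle--Lenzing perpendicular-category theory \cite{GL}, but the explicit functors $G,G'$ deliver wideness and the identification of the algebra as $kQ_v$ simultaneously.
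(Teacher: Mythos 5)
Your proof is correct, but it takes a genuinely different route from the paper. The paper treats both perpendicular categories at once via the recollement $(\mod kQ/\langle e\rangle,\ \mod kQ,\ \mod e(kQ)e)$ with $e=1-e_v$: since $\mathsf{E}=(-)e$ is simultaneously a localization and a colocalization at the Serre subcategory $\add S(v)$, the essential images of its fully faithful adjoints $\mathsf{L}$ and $\mathsf{R}$ are identified with ${}^\perp S(v)$ and $S(v)^\perp$ by Geigle--Lenzing, and both are then equivalent to $\mod e(kQ)e\iso \mod kQ_v$; wideness is quoted from \cite[Proposition 1.1]{GL}. You instead build the adjoints by hand: your $G$ (resp.\ $G'$) is exactly the left (resp.\ right) adjoint of the restriction functor, and the decisive step --- that vanishing of $\Ext^1$ upgrades the surjection $\bigoplus_i M_{u_i}\to M_v$ (resp.\ the injection $M_v\to\bigoplus_j M_{w_j}$) coming from $\Hom$-vanishing to an isomorphism --- is a clean Euler-form dimension count, valid because $kQ$ is hereditary. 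Your computations of $\Hom(M,S(v))$ and $\Hom(S(v),M)$ as cokernel and kernel of the assembled maps are right, $\rho G=\id$ and $G\rho\iso\id$ do give quasi-inverse equivalences, and exactness of $G$ plus extension-closure of any perpendicular category yields wideness without citing \cite{GL}. The trade-off: the paper's argument is shorter, orientation-free in its formulation, and generalizes beyond the quiver setting, while yours is self-contained, makes the equivalence completely explicit (useful, e.g., for tracking dimension vectors as in Proposition \ref{ice:prop:red}), but is genuinely tied to the hereditary quiver situation through the Ringel form.
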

\begin{proof}
  The fact that these categories are wide subcategories follows from \cite[Proposition 1.1]{GL} since $kQ$ is hereditary, and its proof is quite straightforward, so we omit this.

  For the rest, we will use the theory of a \emph{recollement}. We refer the definitions and details to \cite{psa}. Let $e_v$ be an idempotent corresponding to $v$, and put $e = 1- e_v$. The we have the following recollement diagram, where $\mathsf{I}$ is the natural embedding functor.
  \[
  \begin{tikzcd}
    \mod \frac{kQ}{\la e \ra} \rar[hookrightarrow, "\mathsf{I}"]
    & \mod kQ \rar["\mathsf{E}"] \lar[bend right = 40] \lar[bend left = 40]
    & \mod e(kQ)e \lar[bend left = 40, hookrightarrow, "\mathsf{R}"] \lar[bend right = 40,hookrightarrow, "\mathsf{L}"']
  \end{tikzcd}
  \]
  Here $\mathsf{E}:= \Hom_{kQ}(e(kQ),-) = (-)e$, and $\mathsf{L}$ and $\mathsf{R}$ are left and right adjoint functors of $\mathsf{E}$, and both are fully faithful.
  Consider the essential image $\im\mathsf{I}$ of $\mathsf{I}$.
  Since $\mathsf{E}$ is a localization and a colocaliztion with respect to the Serre subcategory $\im\mathsf{I}$ \cite[Remark 2.2]{psa}, we have that $\im\mathsf{R}$ and $\im\mathsf{L}$ coincide with the perpendicular category $(\im \mathsf{I})^\perp$ and $^\perp(\im\mathsf{I})$ respectively \cite[Proposition 2.2]{GL}.
  On the other hand, $\im\mathsf{I}$ consists of modules $M$ with $Me = 0$, thus $\im\mathsf{I} = \add S(v)$ holds.
  Therefore, we have $\im\mathsf{R} = S(v)^\perp$ and $\im\mathsf{L}={}^\perp S(v)$. Since $\mathsf{L}$ and $\mathsf{R}$ are fully faithful, both $S(v)^\perp$ and $^\perp S(v)$ are equivalent to $\mod e(kQ)e$. Now the assertion holds since we clearly have an isomorphism of algebras $e(kQ)e \iso k Q_v$.
\end{proof}

By using this, we obtain the following bijection.
\begin{proposition}\label{ice:prop:red}
  Let $Q$ be an acylic quiver and $v$ a sink or a source of $Q$. Then for $i \geq 1$, we have a bijection
  \[
  \rigid^i_{\la v \ra}(kQ) \iso \rigid^{i-1}(k Q_v).
  \]
  The map is given by $U \mapsto (U_v)e \in \mod e(kQ)e$, where $e$ is the same as in the proof of Lemma \ref{ice:lem:removing}, and we identify $e (kQ) e $ with $k Q_v$.
\end{proposition}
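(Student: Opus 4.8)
The plan is to identify the complement of the $S(v)$-summand of $U$ with an object of a perpendicular category, which by Lemma \ref{ice:lem:removing} is equivalent to $\mod kQ_v$ via $\mathsf{E} = (-)e$, and then to check that this identification matches up rigid modules on both sides. I would treat the case where $v$ is a sink in detail; the case of a source is entirely dual, interchanging ${}^\perp S(v)$ with $S(v)^\perp$ and $\mathsf{L}$ with $\mathsf{R}$, while the formula $U \mapsto (U_v)e$ is literally the same. Since $U \in \rigid^i_{\la v \ra}(kQ)$ is basic and contains the indecomposable $S(v)$ exactly once, I write $U = U_v \oplus S(v)$ with $S(v) \nmid U_v$ and $|U_v| = i-1$.

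The heart of the argument, and the step I expect to be the main obstacle, is the claim that $U_v \in {}^\perp S(v)$. Because $v$ is a sink, $S(v) = P(v)$ is simple projective, so $\Ext_{kQ}^1(S(v),-) = 0$; in particular $\Ext_{kQ}^1(U_v, S(v)) = 0$ is exactly the cross term guaranteed by the rigidity of $U$. For the $\Hom$-vanishing, note that any nonzero map $U_v \to S(v)$ is surjective since $S(v)$ is simple, giving a short exact sequence $0 \to K \to U_v \to S(v) \to 0$; this splits because $\Ext_{kQ}^1(S(v),K) = 0$ (again $S(v)$ is projective), and then $S(v)$ would be a summand of $U_v$, contradicting $S(v) \nmid U_v$. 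Hence $\Hom_{kQ}(U_v, S(v)) = 0$, so $U_v \in {}^\perp S(v)$. I emphasize that rigidity is used here only for the $\Ext^1$-term; the $\Hom$-vanishing comes purely from the projectivity (for a sink) or injectivity (for a source) of $S(v)$.

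By Lemma \ref{ice:lem:removing} and the recollement in its proof, $\mathsf{E}$ restricts to an exact equivalence $\mathsf{E}\colon {}^\perp S(v) \equi \mod kQ_v$ with quasi-inverse $\mathsf{L}$. Since ${}^\perp S(v)$ is a wide subcategory, $\Ext^1$ computed in it agrees with $\Ext_{kQ}^1$, so this exact equivalence carries rigid objects to rigid objects; being an equivalence, it also preserves basicness and the number of indecomposable summands. Therefore $(U_v)e$ is a basic rigid $kQ_v$-module with $|(U_v)e| = |U_v| = i-1$, which shows the map is well-defined into $\rigid^{i-1}(kQ_v)$.

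For the inverse, given $V \in \rigid^{i-1}(kQ_v)$ I would set $\tilde V := \mathsf{L}(V) \in {}^\perp S(v)$, a basic rigid module with $i-1$ summands and no $S(v)$-summand (as $\Hom(S(v),S(v)) \neq 0$ forces $S(v) \notin {}^\perp S(v)$), and send $V \mapsto \tilde V \oplus S(v)$. This is rigid: $\Ext^1(\tilde V, S(v)) = 0$ and $\Ext^1(S(v), \tilde V) = 0$ from $\tilde V \in {}^\perp S(v)$ and the projectivity of $S(v)$, while $\Ext^1(S(v),S(v)) = 0$ since $Q$ is acyclic; thus $\tilde V \oplus S(v) \in \rigid^i_{\la v \ra}(kQ)$. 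Finally, the two assignments are mutually inverse by the recollement relations $\mathsf{E}\mathsf{L} \iso \id$ and $\mathsf{L}\,\mathsf{E}|_{{}^\perp S(v)} \iso \id$ together with the uniqueness of the decomposition $U = U_v \oplus S(v)$, completing the bijection.
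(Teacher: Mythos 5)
Your proposal is correct and follows essentially the same route as the paper: the key step in both is showing $U_v \in {}^\perp S(v)$ via the projectivity of the simple $S(v)$ at a sink (the $\Ext^1$-vanishing from rigidity, the $\Hom$-vanishing from simplicity plus splitting), and then transferring rigidity through the exact equivalence ${}^\perp S(v) \equi \mod kQ_v$ from Lemma \ref{ice:lem:removing}. The paper packages this as an ``if and only if'' claim rather than constructing the inverse map explicitly, but the content is identical.
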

\begin{proof}
  We give a proof for the case $v$ is a sink, and the same proof applies for the source case by using $S(v)^\perp$ instead of $^\perp S(v)$. First recall that these subcategories are wide subcategories of $\mod kQ$ by Lemma \ref{ice:lem:removing}, thus are abelian categories.
  The key observation is the following claim.

  {\bf (Claim)}:
  \emph{For $X \in \mod_v(kQ)$, the following are equivalent:
  \begin{enumerate}
    \item $X \oplus S(v)$ is rigid.
    \item $X \in {}^\perp S(v)$ holds, and $X$ is rigid in the abelian category ${}^\perp S(v)$.
  \end{enumerate}
  }
  \emph{Proof of (Claim)}.

  (1) $\Rightarrow$ (2):
  Since $S(v)\oplus X$ is rigid, $\Ext_{kQ}^1(X,S(v)) = 0$ holds. Moreover, if we have a non-zero map $X \to S(v)$, then it must be surjective since $S(v)$ is simple, hence it splits since $S(v)$ is projective. This contradicts to $X \in \mod_v kQ$, therefore we have $\Hom_{kQ}(X,S(v)) = 0$.
  It follows that $X \in {}^\perp S(v)$ holds.
  Since ${}^\perp S(v)$ is a wide subcategory of $\mod kQ$, an $\Ext^1$ inside ${}^\perp S(v)$ is the same as an $\Ext^1$ inside $\mod kQ$. Thus $X$ is rigid in the abelian category ${}^\perp S(v)$ since so is in $\mod kQ$.

  (2) $\Rightarrow$ (1):
  By the above argument, $X$ is a rigid $kQ$-module, and $\Ext_{kQ}^1(S(v),X \oplus S(v))$ vanishes since $S(v)$ is projective. Thus $X\oplus S(v)$ is rigid by $X \in {}^\perp S(v)$. $\qedb$

  By (Claim), the map $U \mapsto U_v$ clearly induces the following bijection
  \[
  \rigid^i_{\la v \ra} (kQ) \iso \rigid^{i-1}({}^\perp S(v)),
  \]
  where the right hand side is the set of basic rigid objects $X$ in the abelian category ${}^\perp S(v)$ satisfying $|X| = i-1$. Now the assertion holds from Lemma \ref{ice:lem:removing}, since $^\perp S(v)$ is equivalent to $\mod kQ_v$ as abelian categories.
\end{proof}

Now we immediately obtain the second bijection.
\begin{corollary}\label{ice:cor:2ndbij}
  Let $Q$ be an acylic quiver and $i$ a sink of $Q$. Then there is a bijection between $\rigid_{\la v \ra}^i (kQ)$ and $\rigid_{\la v \ra}^i k(\mu_v Q)$.
\end{corollary}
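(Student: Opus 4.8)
The plan is to derive this directly from Proposition \ref{ice:prop:red}, applied to $Q$ and to $\mu_v Q$ separately, and then to glue the two bijections together. The essential point is that Proposition \ref{ice:prop:red} is available both for sinks and for sources, and that sink mutation at $v$ turns the sink $v$ of $Q$ into a source of $\mu_v Q$ (as already noted after the definition of sink mutation). So I may apply the proposition in the sink case to $Q$ and in the source case to $\mu_v Q$, at the same vertex $v$.

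The combinatorial heart of the argument is the identity $Q_v = (\mu_v Q)_v$ of quivers. Indeed, $\mu_v Q$ differs from $Q$ only by reversing the arrows ending at $v$, and all of these arrows are incident to $v$; since passing to $Q_v$ (resp. $(\mu_v Q)_v$) deletes $v$ together with every arrow incident to $v$, the resulting quivers have the same vertices and the same (unchanged) arrows. Hence $kQ_v = k(\mu_v Q)_v$, and in particular the two sets $\rigid^{i-1}(kQ_v)$ and $\rigid^{i-1}(k(\mu_v Q)_v)$ coincide. This identification is the one place to check carefully, but it is immediate once one observes that the arrows whose orientation changes are precisely those that get deleted.

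With this in hand, for $i \geq 1$ I would invoke Proposition \ref{ice:prop:red} for the sink $v$ of $Q$ to obtain a bijection $\rigid^i_{\la v \ra}(kQ) \iso \rigid^{i-1}(kQ_v)$, and invoke it for the source $v$ of $\mu_v Q$ to obtain $\rigid^i_{\la v \ra}(k(\mu_v Q)) \iso \rigid^{i-1}(k(\mu_v Q)_v)$. Since the two targets are equal by the identity above, composing the first bijection with the inverse of the second yields the desired bijection $\rigid^i_{\la v \ra}(kQ) \iso \rigid^i_{\la v \ra}(k(\mu_v Q))$. Finally I would dispose of the boundary case $i = 0$: here both sides are empty, since a basic module $M$ with $|M| = 0$ is zero and therefore cannot contain $S(v)$ as a direct summand, so the bijection holds vacuously. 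I do not expect any genuine obstacle; the only subtlety is the invariance $Q_v = (\mu_v Q)_v$, which makes the two instances of Proposition \ref{ice:prop:red} land in literally the same set.
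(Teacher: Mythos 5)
Your proof is correct and follows essentially the same route as the paper: apply Proposition \ref{ice:prop:red} once for the sink $v$ of $Q$ and once for the source $v$ of $\mu_v Q$, observe that $(\mu_v Q)_v = Q_v$, and compose the two bijections. Your explicit justification of $Q_v = (\mu_v Q)_v$ and your treatment of the vacuous case $i=0$ are slightly more careful than the paper's one-line argument, but the substance is identical.
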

\begin{proof}
  Note that $v$ is a source of $\mu_v Q$. Then by Proposition \ref{ice:prop:red}, we have two bijections between $\rigid_{\la v \ra}^i (kQ) \iso \rigid^{i-1}(k Q_v)$ and $\rigid_{\la v \ra}^i k(\mu_v Q) \iso \rigid^{i-1}k(\mu_v Q)_v$.
  Since $(\mu_v Q)_v = Q_v$ holds, we obtain a bijection between $\rigid_{\la v \ra}^i (kQ)$ and $\rigid_{\la v \ra}^i k(\mu_v Q)$ by composing the above two bijections.
\end{proof}

Now we are ready to prove Theorem \ref{ice:thm:mutinv}.
\begin{proof}[Proof of Theorem \ref{ice:thm:mutinv}]
  We have the following equalities by definition.
  \begin{align*}
    \rigid^i(kQ) &= \rigid_v^i(kQ) \sqcup \rigid_{\la v \ra}^i(kQ) \\
    \rigid^i(k(\mu_vQ)) &= \rigid_v^i(k(\mu_vQ)) \sqcup \rigid_{\la v \ra}^i(k(\mu_vQ))
  \end{align*}
   Now we have a bijection $\rigid_v^i(kQ) \iso  \rigid_v^i(k(\mu_vQ))$ by Corollary \ref{ice:cor:1stbij}, and a bijection $\rigid_{\la v \ra}^i(kQ) \iso \rigid_{\la v \ra}^i(k(\mu_vQ))$ by Corollary \ref{ice:cor:2ndbij}. Thus by combining these two, we obtain a bijection between $\rigid^i(kQ)$ and $\rigid^i(k(\mu_vQ))$.
\end{proof}

\subsection{Formula for the number of rigid modules}
In this subsection, we give an explicit formula for $\#\rigid^i (kQ)$ for a Dynkin quiver $Q$.
\emph{From now on, we assume that $Q$ is a Dynkin quiver of type $X_n \in \{ A_n, D_n, E_6,E_7,E_8\}$ with $n$ vertices.}

Let $\Delta (Q)$ be a simplicial complex defined as follows: the set of vertices is $\rigid (kQ)$, and an $(i-1)$-simplex consists of sets of rigid $kQ$-modules whose direct sum is rigid, or equivalently, belongs to $\rigid^i(kQ)$.
This complex was introduced by Riedtmann and Schofield \cite{RS}. By definition, $\#\rigid^i(kQ)$ is equal to the number of $(i+1)$-faces of $\Delta(Q)$, thus the calculation of $\#\rigid^i(kQ)$ is nothing but that of the face vector of $\Delta(Q)$. Although this complex is classical, there seems to be no papers which contain an explicit formula of $\#\rigid^i(kQ)$.

We give such an formula, by translating our problem to a combinatorial problem on a \emph{cluster complex}.
Let $\Phi$ be the root system of type $X_n$, and let $\Phi_{\geq -1}$ denote the set of \emph{almost positive roots} of $\Phi$, that is, positive roots together with negative simple roots.
Then the \emph{cluster complex} $\Delta(X_n)$ of type $X_n$, also known as the \emph{generalized associahedron}, is a simplicial complex with the vertex set $\Phi_{\geq -1}$.
We refer the reader to \cite{FZ,MRZ} for the details. Then this complex contains $\Delta(Q)$ if $Q$ is \emph{bipirtite}, that is, every vertex is either a sink or a source. More precisely, the following holds.
\begin{proposition}\label{ice:prop:poscon}
  Let $Q$ be a Dynkin quiver with a bipirtite orientation. Then taking dimension vectors, we have an embedding $\Delta(Q) \hookrightarrow \Delta(X_n)$, which induces an isomorphism between $\Delta(Q)$ and the full subcomplex of $\Delta(X_n)$ spanned by positive roots.
\end{proposition}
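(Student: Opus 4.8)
The plan is to reduce everything to the $0$- and $1$-skeleta by exploiting that both complexes are \emph{flag} (clique) complexes. First I would recall why $\Delta(Q)$ is flag: its vertices are the indecomposable rigid $kQ$-modules, and a set $\{M_1,\dots,M_r\}$ of them spans a face exactly when $\bigoplus_i M_i$ is rigid; since $\Ext_{kQ}^1(\bigoplus_i M_i,\bigoplus_j M_j)=\bigoplus_{i,j}\Ext_{kQ}^1(M_i,M_j)$, this holds if and only if $M_i\oplus M_j$ is rigid for every pair. On the other side, the cluster complex $\Delta(X_n)$ is flag with respect to the Fomin--Zelevinsky compatibility relation by \cite{FZ}, hence so is its full subcomplex spanned by $\Phi^+$. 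Thus it suffices to construct a bijection on vertices that carries the indecomposable rigid modules onto $\Phi^+$ and matches the two edge relations.

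For the vertices I would argue as follows. Since $Q$ is Dynkin, $kQ$ is representation-finite and every indecomposable $kQ$-module is rigid (for instance because $\langle\underline{\dim}M,\underline{\dim}M\rangle=1$ forces $\Ext_{kQ}^1(M,M)=0$), so the vertices of $\Delta(Q)$ are precisely all indecomposable $kQ$-modules. By Gabriel's theorem the dimension vector $\underline{\dim}$ gives a bijection between these and the positive roots $\Phi^+\subset\Phi_{\geq-1}$, which are exactly the vertices of the full positive-root subcomplex of $\Delta(X_n)$. This is the required vertex bijection.

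The heart of the matter, and the step I expect to be the main obstacle, is the edge correspondence: for distinct indecomposables $M,N$ with $\alpha=\underline{\dim}M$ and $\beta=\underline{\dim}N$, I must show that $M\oplus N$ is rigid if and only if $\{\alpha,\beta\}$ is a compatible pair in $\Delta(X_n)$. This is precisely where the bipartite hypothesis is used: for a bipartite orientation the compatibility relation on positive roots coincides, via the correspondence of \cite{MRZ}, with $\Ext$-orthogonality of the associated indecomposable representations, i.e.\ with the simultaneous vanishing of $\Ext_{kQ}^1(M,N)$ and $\Ext_{kQ}^1(N,M)$, which is exactly rigidity of $M\oplus N$. (Alternatively, one passes to the cluster category of $Q$, where $\Ext^1$ between two module objects is $\Ext_{kQ}^1(M,N)\oplus\Ext_{kQ}^1(N,M)$, and invokes the identification of its complex of rigid objects with $\Delta(X_n)$.)

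The delicate points in this last step are entirely about conventions: one must fix the bipartite Coxeter element and the resulting labeling of $\Phi_{\geq-1}$ so that the dimension-vector bijection is genuinely compatible with the Fomin--Zelevinsky relation, and verify that the bipartite assumption is what prevents an Auslander--Reiten ($\tau$-)twist from intervening between the two sides. Once this pairwise statement is secured, the flag property of both complexes promotes the vertex bijection and edge matching to the desired embedding $\Delta(Q)\hookrightarrow\Delta(X_n)$ and isomorphism onto the full subcomplex spanned by positive roots, with no higher-dimensional verification required.
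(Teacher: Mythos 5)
Your argument is correct and ultimately rests on exactly the same input as the paper, which simply refers to \cite[4.12]{MRZ} (and \cite[Theorem 4.5]{BMRRT}) for this statement: your flag-complex reduction plus Gabriel's theorem just makes explicit that the only substantive point is the pairwise one, namely that for a bipartite orientation Fomin--Zelevinsky compatibility of positive roots corresponds to $\Ext$-orthogonality of the associated indecomposables, and that is precisely the content of the cited result of Marsh--Reineke--Zelevinsky. So there is no gap relative to the paper, and no genuinely different route either.
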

We refer the reader to \cite[4.12]{MRZ} for the proof, and to \cite[Theorem 4.5]{BMRRT} for the more theoretical explanation of this using the cluster category.
As a corollary, we have the following equality.
\begin{corollary}
  Let $Q$ be a Dynkin quiver. Then $\#\rigid^i(kQ)$ is equal to the number of $(i-1)$-faces of $\Delta(X_n)$ which contain no negative simple roots.
\end{corollary}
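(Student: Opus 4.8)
The plan is to reduce the assertion to the case of a bipartite orientation, where it is essentially immediate from Proposition~\ref{ice:prop:poscon}, and then to propagate the resulting equality to an arbitrary orientation by invoking the mutation-invariance already established in Corollary~\ref{ice:cor:depend}. The point to exploit is that the quantity on the right-hand side is manifestly an invariant of the Dynkin type $X_n$ alone, while the left-hand side is known to be orientation-independent.

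First I would record the purely combinatorial observation that the right-hand side depends only on the root system $\Phi$, hence only on the underlying Dynkin graph: the cluster complex $\Delta(X_n)$ and its vertex set $\Phi_{\geq -1} = \Phi_{>0} \sqcup (-\Pi)$, with $-\Pi$ the negative simple roots, are determined by $X_n$. Moreover, a face of $\Delta(X_n)$ contains no negative simple root exactly when all of its vertices lie in $\Phi_{>0}$, i.e. exactly when it is a face of the full subcomplex of $\Delta(X_n)$ spanned by the positive roots. Thus the right-hand side is precisely the number of $(i-1)$-faces (faces on $i$ vertices) of this positive-root subcomplex.

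Next, for a quiver $Q$ with bipartite orientation I would invoke Proposition~\ref{ice:prop:poscon}: the dimension-vector map gives a simplicial isomorphism between $\Delta(Q)$ and the positive-root subcomplex of $\Delta(X_n)$. Since by the very definition of $\Delta(Q)$ the number $\#\rigid^i(kQ)$ equals the number of its $(i-1)$-faces, comparing $(i-1)$-face counts under this isomorphism yields the desired equality for bipartite $Q$. Finally, to handle an arbitrary Dynkin quiver $Q$, I would use Corollary~\ref{ice:cor:depend}, which asserts that $\#\rigid^i(kQ)$ depends only on the underlying Dynkin graph: choosing a bipartite orientation $Q_{\mathrm{bip}}$ of that graph gives $\#\rigid^i(kQ) = \#\rigid^i(kQ_{\mathrm{bip}})$, and the latter equals the right-hand side by the bipartite case, which in turn is orientation-independent by the first step. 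There is no substantive obstacle here beyond careful bookkeeping; the only points to watch are the indexing convention that identifies the $(i-1)$-faces of $\Delta(Q)$ (those on $i$ vertices) with the elements of $\rigid^i(kQ)$, and the elementary verification that the condition \emph{``contains no negative simple root''} is exactly the defining condition for membership in the positive-root subcomplex.
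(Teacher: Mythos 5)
Your proposal is correct and follows essentially the same route as the paper: reduce to a bipartite orientation via Corollary \ref{ice:cor:depend} and then apply Proposition \ref{ice:prop:poscon}, identifying the faces with no negative simple roots with the faces of the positive-root subcomplex. Your explicit bookkeeping of the ``$(i-1)$-face equals $i$ vertices'' convention is a welcome clarification but does not change the argument.
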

\begin{proof}
  We can transform $Q$ into a bipirtite Dynkin quiver by using sink mutations. Thus we may assume that $Q$ is bipirtite by Corollary \ref{ice:cor:depend}. Then the assertion is immediate from Proposition \ref{ice:prop:poscon}.
\end{proof}

Now we are ready to show the formula of $\#\rigid^i(kQ)$ by using \cite{ftri}, which enumerates the number of faces of $\Delta(Q)$ satisfying various conditions.

\begin{theorem}\label{ice:thm:count}
  Let $Q$ be a Dynkin quiver of type $X_n$. Then the number $\#\rigid^i(kQ)$ is equal to {\upshape ($X_n$)} in the following list, where $\binom{n}{i}$ denotes the binomial coefficient.
  \begin{itemize}[font=\upshape, itemsep = 2ex]
    \item[($A_n$)] $\displaystyle \frac{1}{i+1} \binom{n}{i}\binom{n+i}{i}$
    \item[($D_n$)]
    $\displaystyle
    \binom{n}{i}\binom{n+i-2}{i} + \binom{n-1}{i-1}\binom{n+i-3}{i-1}
    - \frac{1}{n-1} \binom{n-1}{i-1}\binom{n+i-2}{i}
    $
    \item[($E_6$)]
      \begin{tabular}{|C||C|C|C|C|C|C|C||C|}
        \hline
        i & 0 & 1 & 2 & 3 & 4 & 5 & 6 & \text{\upshape total} \\ \hline
        & 1 & 36 & 300 & 1035 & 1720 & 1368 & 418 & 4878 \\\hline
      \end{tabular}

    \item[($E_7$)]
      \begin{tabular}{|C||C|C|C|C|C|C|C|C||C|}
        \hline
        i & 0 & 1 & 2 & 3 & 4 & 5 & 6 & 7 & \text{\upshape total}  \\ \hline
        & 1 & 63 & 777 & 3927 & 9933 & 13299 & 9009 & 2431 & 39440 \\ \hline
      \end{tabular}

    \item[($E_8$)]
      \begin{tabular}{|C||C|C|C|C|C|C|C|C|C||C|}
        \hline
        i & 0 & 1 & 2 & 3 & 4 & 5 & 6 & 7 & 8 & \text{\upshape total} \\ \hline
        & 1 & 120 & 2135 & 15120 & 54327 & 108360 & 121555 & 71760 & 17342 & 390720 \\ \hline
      \end{tabular}
  \end{itemize}
\end{theorem}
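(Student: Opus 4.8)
The plan is to translate the entire problem into the face combinatorics of the cluster complex and then invoke Chapoton's enumeration of its faces. By the corollary immediately preceding this theorem, $\#\rigid^i(kQ)$ equals the number of $(i-1)$-dimensional faces of the cluster complex $\Delta(X_n)$ that contain no negative simple root; equivalently, it is the number of faces spanned by exactly $i$ positive roots (since the vertices of $\Delta(X_n)$ are almost positive roots, i.e.\ positive roots together with negative simple roots). These are precisely the faces recorded by the $y=0$ specialization of the \emph{$F$-triangle} of $\Delta(X_n)$.

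Recall that the $F$-triangle of type $X_n$ is the two-variable generating polynomial $F_{X_n}(x,y)=\sum_{\ell,m\geq 0} f_{\ell,m}\,x^\ell y^m$, where $f_{\ell,m}$ is the number of faces of $\Delta(X_n)$ involving $\ell$ positive roots and $m$ negative simple roots (so $f_{0,0}=1$ is the empty face, and the facets satisfy $\ell+m=n$). A face with no negative simple root and $i$ positive roots therefore contributes to $f_{i,0}$, giving the identity
\[
\#\rigid^i(kQ) = f_{i,0} = [x^i]\,F_{X_n}(x,0).
\]
Chapoton \cite{ftri} computed $F_{X_n}(x,y)$ explicitly for every Dynkin type, so the theorem reduces to extracting the coefficients of $F_{X_n}(x,0)$ and simplifying them into the displayed closed forms.

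For the two infinite families I would proceed as follows. Substituting $y=0$ into Chapoton's closed form for $F_{A_n}(x,y)$ and reading off the coefficient of $x^i$ yields a single hypergeometric term, which I expect to collapse to $\frac{1}{i+1}\binom{n}{i}\binom{n+i}{i}$ after a routine binomial manipulation; as consistency checks, setting $i=n$ gives the Catalan number $C_n$ (the number of positive clusters) and summing over $i$ recovers the large Schr\"oder number, matching the claim in the abstract. The type $D_n$ case is handled identically, except that Chapoton's formula carries an extra correction term, which I expect to be the origin of the summand $-\frac{1}{n-1}\binom{n-1}{i-1}\binom{n+i-2}{i}$ in the stated expression. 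For the exceptional types $E_6,E_7,E_8$ the $F$-triangle is a finite array listed in \cite{ftri}, so here the proof is simply to read off the entries $f_{i,0}$ and tabulate them, the ``total'' column being $\sum_i f_{i,0}=F_{X_n}(1,0)$.

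The main obstacle is not any single computation but the careful reconciliation of conventions. Chapoton's $F$-triangle may index faces by codimension rather than dimension, and the roles of ``positive'' versus ``negative simple'' roots (hence of the variables $x$ and $y$) must be matched exactly to the formulation of the preceding corollary; an off-by-one shift or a swap of the two variables would silently corrupt every resulting formula. Once the dictionary ``no negative simple roots $\leftrightarrow$ $y^0$, and $i$ positive roots $\leftrightarrow$ $x^i$'' is pinned down against small cases (e.g.\ verifying $f_{0,0}=1$ and that the $A_1$ data reads $(1,1)$), the remaining effort for types $A$ and $D$ is purely binomial bookkeeping, while for the exceptional types it is finite bookkeeping of a different kind.
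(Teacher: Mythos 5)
Your proposal is correct and follows essentially the same route as the paper: both reduce via the preceding corollary to counting faces of the cluster complex with no negative simple roots, and then specialize the $F$-triangle computations of \cite{ftri} (there for the $m$-generalized cluster complex, at $m=1$) to $y=0$ and read off the coefficient of $x^i$. The only cosmetic difference is attribution — the $F$-triangle is Chapoton's invention, but the explicit formulas invoked here are Krattenthaler's — and your emphasis on pinning down the $(x,y)$-conventions against small cases is a sensible precaution the paper leaves implicit.
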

\begin{proof}
  The computation is achieved by specializing the results \cite[Theorems FA, FD, Section 7]{ftri} to $m=1$ and $y=0$. More precisely, in \cite{ftri}, the \emph{$m$-generalization} of cluster complexes are studied, and $m=1$ is the classical case. Then the number of faces of an $m$-cluster complex which consists of given numbers of positive roots and negative roots was computed, and $y=0$ means that we exclude negative roots.
\end{proof}

By our main result, $\#\rigid (kQ)$ is equal to the number of ICE-closed subcategories of $\mod kQ$.
Since we have $\#\rigid(kQ) = \sum_{i=0}^{n}\#\rigid^i(kQ)$, we obtain the following enumeration.
\begin{corollary}\label{ice:cor:count}
  Let $Q$ be a Dynkin quiver of type $X_n$. Then the number of ICE-closed subcategories in $\mod kQ$ is equal to the sum of the numbers given in Theorem \ref{ice:thm:count} over $i=0,1,\dots,n$. In particular, if $Q$ is of type $A_n$, then the equality holds,
  \[
    \#\ice (kQ) = \sum_{i=0}^n \frac{1}{i+1}\binom{n}{i}\binom{n+i}{i},
  \]
  where the right hand side is known as the $n$-th large Schr\"oder number \cite[A006318]{OEIS}.
\end{corollary}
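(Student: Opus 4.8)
The plan is to reduce the enumeration to a count of basic rigid modules and then feed in the closed forms already assembled in Theorem~\ref{ice:thm:count}. First I would exploit that $Q$ is Dynkin, so $kQ$ is representation-finite. By Theorem~\ref{ice:thm:main} this forces $\ice(kQ) = \icep(kQ)$, and the same theorem provides the bijection $\ccok \colon \rigid(kQ) \to \icep(kQ)$ with inverse $P(-)$. Hence $\#\ice(kQ) = \#\rigid(kQ)$. Next I would partition $\rigid(kQ)$ according to the number of indecomposable summands: as noted after the definition of $\rigid^i$, the Bongartz completion forces $0 \le |U| \le n$ for every basic rigid $U$, so $\rigid(kQ) = \bigsqcup_{i=0}^{n} \rigid^i(kQ)$ is a finite disjoint union and therefore $\#\rigid(kQ) = \sum_{i=0}^{n} \#\rigid^i(kQ)$.

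Combining these two observations gives the first assertion directly: $\#\ice(kQ)$ equals $\sum_{i=0}^{n} \#\rigid^i(kQ)$, and each term is the value $(X_n)$ tabulated in Theorem~\ref{ice:thm:count}, so the total ICE-count is the sum of the listed numbers for the relevant Dynkin type. For types $D_n, E_6, E_7, E_8$ there is nothing further to do beyond reading off (or summing) the table.

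For the type $A_n$ refinement I would substitute the explicit formula $\#\rigid^i(kQ) = \tfrac{1}{i+1}\binom{n}{i}\binom{n+i}{i}$ from case $(A_n)$ of Theorem~\ref{ice:thm:count} into the sum above, yielding
\[
  \#\ice(kQ) = \sum_{i=0}^{n} \frac{1}{i+1}\binom{n}{i}\binom{n+i}{i}.
\]
The remaining step is to recognize this expression as the $n$-th large Schr\"oder number. I expect this to be the only genuinely combinatorial point in the proof: one can either cite the standard identity recorded as OEIS A006318, or verify it directly by checking that the right-hand side satisfies the large Schr\"oder recurrence (equivalently, that it has generating function $\tfrac{1 - x - \sqrt{1 - 6x + x^2}}{2x}$). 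Since every other ingredient is an immediate bookkeeping consequence of the structural bijection of Theorem~\ref{ice:thm:main} and the face-count formulas of Theorem~\ref{ice:thm:count}, the main obstacle is precisely this closed-form recognition, which I would dispatch by appeal to the known identity rather than reprove from scratch.
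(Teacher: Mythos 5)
Your proposal is correct and follows exactly the paper's own route: apply the representation-finite case of Theorem~\ref{ice:thm:main} to identify $\#\ice(kQ)$ with $\#\rigid(kQ)$, decompose $\rigid(kQ)=\bigsqcup_{i=0}^{n}\rigid^i(kQ)$, and sum the formulas of Theorem~\ref{ice:thm:count}, citing the known identity for the large Schr\"oder number in type $A_n$. No gaps.
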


\begin{remark}
  In \cite[Theorem 6.13]{enomono}, the author computes $\#\ice (kQ)$ for a linearly oriented $A_n$ quiver by a different method. By combining this with Theorem \ref{ice:thm:mutinv}, we can give another proof of the fact that $\#\ice kQ$ is equal to the $n$-th large Schr\"oder number for a quiver of type $A_n$.
\end{remark}

\end{appendix}

\begin{ack}
  The author would like to thank Osamu Iyama for helpful comments. He would also like to thank Arashi Sakai and Yasuaki Gyoda for helpful discussions.
  This work is supported by JSPS KAKENHI Grant Number JP18J21556.
\end{ack}

\end{document}